\crefname{counterexample}{Counterexample}{Counterexamples}
\Crefname{counterexample}{Counterexample}{Counterexamples}
\tikzset{
  negate/.style={
    decoration={
      markings,
      mark= at position 0.5 with {
        \node[transform shape] (tempnode) {$/$};
      },
    },
    postaction={decorate},
  },
}
\newcolumntype{x}[1]{>{\centering\arraybackslash\hspace{0pt}}p{#1}}
\newcommand*{\cX}{\mathcal{X}}
\newcommand*{\cY}{\mathcal{Y}}
\newcommand*{\cH}{\mathcal{H}}
\newcommand*{\cG}{\mathcal{G}}
\newcommand*{\cF}{\mathcal{F}}
\newcommand*{\cP}{\mathcal{P}}
\newcommand*{\cL}{\mathcal{L}}
\newcommand*{\cC}{\mathcal{C}}
\newcommand*{\fm}{\mathfrak{m}}
\newcommand*{\ff}{\mathfrak{f}}
\newcommand*{\uu}{\prescript{u}{} }
\newcommand*{\bR}{\mathbb{R}}
\newcommand*{\bN}{\mathbb{N}}
\newcommand*{\bE}{\mathbb{E}}
\newcommand*{\bV}{\mathbb{V}}
\newcommand*{\bP}{\mathbb{P}}
\newcommand*{\bQ}{\mathbb{Q}}
\newcommand*{\cN}{\mathcal{N}}
\newcommand*{\Cov}{\mathbb{C}\mathrm{ov}}
\newcommand*{\quark}{\setbox0\hbox{$x$}\hbox to\wd0{\hss$\cdot$\hss}}
\newcommand*{\rd}{\mathrm{d}}
\DeclareMathOperator{\spn}{span}
\DeclareMathOperator{\supp}{supp}
\DeclareMathOperator{\Id}{Id}
\DeclareMathOperator*{\argmin}{arg\,min}
\DeclareMathOperator{\ran}{ran}
\DeclareMathOperator{\dom}{dom}
\definecolor{darkgreen}{rgb}{0,0.4,0}
\newcommand*{\defeq}{\coloneqq}
\theoremstyle{plain}
\newtheorem{theorem}{Theorem}[section]
\newtheorem{proposition}[theorem]{Proposition}
\newtheorem{lemma}[theorem]{Lemma}
\newtheorem{corollary}[theorem]{Corollary}
\theoremstyle{definition}
\newtheorem{definition}[theorem]{Definition}
\newtheorem{example}[theorem]{Example}
\newtheorem{remark}[theorem]{Remark}
\newtheorem{assumption}[theorem]{Assumption}
\newcommand{\absval}[1]{\vert #1 \vert}
\newcommand{\innerprod}[2]{\langle #1 , #2 \rangle}
\newcommand{\norm}[1]{\Vert #1 \Vert}
\newcommand{\biginnerprod}[2]{\bigl\langle #1 , #2 \bigr\rangle}
\newcommand{\bignorm}[1]{\bigl\Vert #1 \bigr\Vert}
\newcommand{\Innerprod}[2]{\left\langle #1 , #2 \right\rangle}
\numberwithin{equation}{section}
\numberwithin{figure}{section}
\numberwithin{table}{section}
\newtheorem{assumpA}{Assumption}
\newtheorem{assumpB}{Assumption}
\newtheorem{assumpC}{Assumption}
\newtheorem{assumpAprime}{Assumption}
\newtheorem{assumpBprime}{Assumption}
\newtheorem{assumpCdoubleprime}{Assumption}
\definecolor{verylightgray}{rgb}{0.85,0.85,0.85}
\newcommand\mlnode[1]{\fbox{\begin{tabular}{@{}c@{}}#1\end{tabular}}} 
\newcommand\dashedmlnode[1]{\dbox{\begin{tabular}{@{}c@{}}#1\end{tabular}}}
\newcommand\filledmlnode[1]{\fcolorbox{black}{verylightgray}{\begin{tabular}{@{}c@{}}#1\end{tabular}}}
\definecolor{boxgreen}{rgb}{0,0.7,0}
\newcommand\greenmlnode[1]{\fcolorbox{boxgreen}{white}{\begin{tabular}{@{}c@{}}#1\end{tabular}}}
\definecolor{boxred}{rgb}{0.7,0,0}
\newcommand*{\email}[1]{\bgroup\color{blue}\href{mailto:#1}{#1}\egroup}
\renewcommand*{\url}[1]{\bgroup\color{blue}\href{#1}{#1}\egroup}
\setlist[enumerate]{nosep}
\setlist[itemize]{nosep}
\renewcommand{\qedsymbol}{$\blacksquare$}
\renewenvironment{proof}[1][\proofname]{\noindent{\bfseries #1.} }{\hfill \qedsymbol \medskip}
\let\oldtitle\title
\renewcommand{\title}[1]{\oldtitle{#1}\newcommand{\theshorttitle}{#1}}
\newcommand{\shorttitle}[1]{\renewcommand{\theshorttitle}{#1}}
\let\oldauthor\author
\renewcommand{\author}[1]{\oldauthor{#1}\newcommand{\theshortauthor}{#1}}
\newcommand{\shortauthor}[1]{\renewcommand{\theshortauthor}{#1}}
\newcommand*{\affilref}[1]{\ref{affiliation#1}}
\newcommand*{\affiliation}[3]{
	\footnotetext[#1]{\label{affiliation#2} #3}
}
\title{A Rigorous Theory of\\Conditional Mean Embeddings}
\shorttitle{A Rigorous Theory of Conditional Mean Embeddings}
\author{%
	Ilja~Klebanov\textsuperscript{\affilref{ZIB}}%
	\and
	Ingmar~Schuster\textsuperscript{\affilref{Zalando}}%
	\and
	T.~J.~Sullivan\textsuperscript{\affilref{ZIB},\affilref{FUB},\affilref{Warwick}}%
}
\begin{document}
\maketitle
\affiliation{1}{ZIB}{Zuse Institute Berlin, Takustra{\ss}e 7, 14195 Berlin, Germany. \email{klebanov@zib.de}, \email{sullivan@zib.de}}
\affiliation{2}{Zalando}{Zalando SE, 11501 Berlin, Germany. \email{ingmar.schuster@zalando.de}}
\affiliation{3}{FUB}{Freie Universit{\"a}t Berlin, Arnimallee 6, 14195 Berlin, Germany. \email{t.j.sullivan@fu-berlin.de}}
\affiliation{4}{Warwick}{Mathematics Institute and School of Engineering, The University of Warwick, Coventry, CV4 7AL, United Kingdom. \email{t.j.sullivan@warwick.ac.uk}}

\begin{abstract}
	\noindent\textbf{\textsf{Abstract:}}
	Conditional mean embeddings (CMEs) have proven themselves to be a powerful tool in many machine learning applications.
They allow the efficient conditioning of probability distributions within the corresponding reproducing kernel Hilbert spaces (RKHSs) by providing a linear-algebraic relation for the kernel mean embeddings of the respective joint and conditional probability distributions.
Both centred and uncentred covariance operators have been used to define CMEs in the existing literature.
In this paper, we develop a mathematically rigorous theory for both variants, discuss the merits and problems of each, and significantly weaken the conditions for applicability of CMEs.
In the course of this, we demonstrate a beautiful connection to Gaussian conditioning in Hilbert spaces.
	
	\medskip

	\noindent\textbf{\textsf{Keywords:}}
	conditional mean embedding, kernel mean embedding, Gaussian measure, reproducing kernel Hilbert space.

	\medskip

	\noindent\textbf{\textsf{2010 Mathematics Subject Classification:}}
	46E22, 62J02, 28C20.
\end{abstract}


\section{Introduction}
\label{section:Intro}

Reproducing kernel Hilbert spaces (RKHSs) have long been popular tools in machine learning because of the powerful property --- often called the ``kernel trick'' --- that many problems posed in terms of the base set $\cX$ of the RKHS $\cH$ (e.g.\ classification into two or more classes) become linear-algebraic problems in $\cH$ under the embedding of $\cX$ into $\cH$ induced by the reproducing kernel $k \colon \cX \times \cX \to \bR$.
This insight has been used to define the \emph{kernel mean embedding} (KME; \citealp{berlinet2004rkhs}; \citealp{smola2007embedding}) $\mu_{X} \in \cH$ of an $\cX$-valued random variable $X$ as the $\cH$-valued mean of the embedded random variable $k(X, \quark)$, and also the \emph{conditional mean embedding} (CME; \citealp{fukumizu2004dimensionality}, \citealp{song2009hilbert}), which seeks to perform conditioning of the original random variable $X$ through application of the Gaussian conditioning formula (also known as the K\'alm\'an update) to the embedded \emph{non-Gaussian} random variable $k(X, \quark)$.
This article aims to provide rigorous mathematical foundations for this attractive but apparently na{\"\i}ve approach to conditional probability, and hence to Bayesian inference.

\begin{figure}[t]
	\centering
	\adjustbox{scale=0.9}{
		\begin{tikzcd}[column sep=4em,row sep=7em]
			\textbf{original spaces } \cX,\cY
			&
			\textbf{RKHS feature spaces } \cH,\cG
			\\[-2.2cm]
			\begin{cases}
				\begin{rcases}
					\qquad\ x\in\cX \\
					\qquad X\sim\bP_{X} \\
					\qquad \, Y\sim\bP_Y \\
					(X,Y)\sim\bP_{XY}
				\end{rcases}
			\end{cases}
			\arrow{r}[sloped,above]{\text{embed}}[sloped,below]{\psi,\varphi}
			\arrow{d}[sloped,above]{\text{conditioning on}}[sloped,below]{X=x}
			&
			\begin{cases}
				\begin{rcases}
					\varphi(x) \\
					\psi(Y),\, \varphi(X) \\
					\mu_{Y},\, C_{Y},\, C_{YX} \\
					\mu_{X},\, C_{XY},\, C_{X} \\
				\end{rcases}
			\end{cases}
			\arrow{d}[sloped,above]{\text{\textcolor{blue}{conditional mean}}}[sloped,below]{\text{\textcolor{blue}{embedding}}}
			\\
			(Y|X=x) \sim \bP_{Y|X=x}
			\arrow{r}{\text{embed}}
			&
			\fbox{$\mu_{Y|X=x} = \begin{cases} C_{YX}C_{X}^{-1}\varphi(x) \phantom{ooooooo} \text{according to } \eqref{equ:CMEmu} \\ \mu_{Y} + (C_{X}^{\dagger} C_{XY})^{\ast} \, (\varphi(x) - \mu_{X}) \phantom{oo} \text{by } \eqref{equ:myCME} \\ (\uu{C}_{X}^{\dagger} \uu{C}_{XY})^{\ast} \, \varphi(x) \phantom{ooooooooooii} \text{by } \eqref{equ:myCMEuncentred} \end{cases}$}
		\end{tikzcd}
	}
	\caption{While conditioning of the probability distributions in the original spaces $\cX,\cY$ is a possibly complicated, non-linear problem, the corresponding formula for their kernel mean embeddings reduces to elementary linear algebra --- a common guiding theme when working with reproducing kernel Hilbert spaces.}
	\label{fig:CommutativeDiagramCME}
\end{figure}
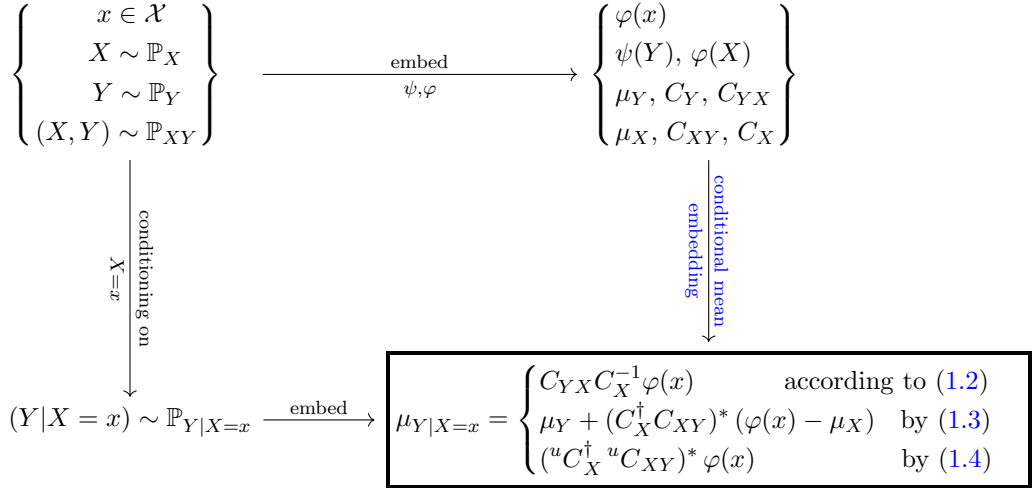

To be somewhat more precise --- while deferring technical points such as topological considerations, existence and uniqueness of conditional distributions etc.\ to \Cref{section:CMESetup} --- let us fix two RKHSs $\cH$ and $\cG$ over $\cX$ and $\cY$ respectively, with reproducing kernels $k$ and $\ell$ and canonical feature maps $\varphi(x) \defeq k(x,\quark)$ and $\psi(y) \defeq \ell(y,\quark)$.
Let $X$ and $Y$ be random variables taking values in $\cX$ and $\cY$ respectively with joint distribution $\bP_{XY}$ on $\cX \times \cY$.
Let $\mu_{X}$, $\mu_{Y}$, and $\mu_{Y|X = x}$ denote the kernel mean embeddings (KMEs) of the marginal distributions $\bP_{X}$ of $X$, $\bP_Y$ of $Y$, and the conditional distribution $\bP_{Y|X=x}$ of $Y$ given $X = x$ given by
\begin{equation}
	\label{equ:KMEandCME}
	\mu_{X}
	\defeq
	\bE[\varphi(X)] \in \cH,
	\qquad
	\mu_{Y}
	\defeq
	\bE[\psi(Y)] \in \cG,
	\qquad
	\mu_{Y|X=x}
	\defeq
	\bE[\psi(Y)|X=x] \in \cG.
\end{equation}
The \emph{conditional mean embedding} (CME) offers a way to perform conditioning of probability distributions on $\cX$ and $\cY$ by means of linear algebra in the corresponding feature spaces $\cH$ and $\cG$ (\Cref{fig:CommutativeDiagramCME}).
In terms of the kernel covariance operator $C_{X}$ and cross-covariance operator $C_{YX}$ defined later in \eqref{equ:MeanAndCovarianceBlockStructure}, if $C_{X}$ is invertible and $\bE[g(Y)|X=\quark]$ is an element of $\cH$ whenever $g \in \cG$, then the well-known formula for the CME \citep[Theorem 4]{song2009hilbert} is
\begin{align}
	\label{equ:CMEmu}
	\mu_{Y|X = x}
	&=
	C_{YX} C_{X}^{-1} \varphi(x),
	\qquad
	x\in\cX.
\end{align}
(We emphasise here that the CME $\mu_{Y|X = x}$ is \emph{defined} in \eqref{equ:KMEandCME} as the KME of $\bP_{Y|X=x}$;
the claim implicit in \eqref{equ:CMEmu} is that $\mu_{Y|X = x}$ can be \emph{realised} through simple linear algebra involving cross-covariance operators;
cf.\ the discussion of \citet{park2020measure}.)
Note that there are in fact two theories of CMEs, one working with \emph{centred} covariance operators \citep{fukumizu2004dimensionality,song2009hilbert} and the other with \emph{uncentred} ones \citep{fukumizu2013kernel}.
We will discuss both theories in detail, but let us focus for a moment on the centred case for which the above formula was originally derived.

In the trivial case where $X$ and $Y$ are independent, the CME should yield $\mu_{Y|X = x} = \mu_{Y}$.
However, independence implies that $C_{YX} = 0$, and so \eqref{equ:CMEmu} yields $\mu_{Y|X = x} = 0$, regardless of $x$.
In order to understand what has gone wrong it is helpful to consider in turn the two cases in which the constant function $\mathds{1}_\cX\colon x\mapsto 1$ is, or is not, an element of $\cH$.
\begin{itemize}
	\item If $\mathds{1}_{\cX} \in \cH$, then $C_{X}$ cannot be injective, since $C_{X}\mathds{1}_{\cX} = 0$, and \eqref{equ:CMEmu} is not applicable.	
	\item If $\mathds{1}_{\cX} \notin \cH$ and $X$ and $Y$ are independent, then the assumption $\bE[g(Y)|X=\quark]\in\cH$ for $g\in\cG$ cannot be fulfilled (except for those special elements $g\in\cG$ for which $\bE[g(Y)]=0$ or if $\bE[\ell(y,Y)] = 0$ for all $y\in\cY$, respectively), and \eqref{equ:CMEmu} is again not applicable.
\end{itemize}
In summary, \eqref{equ:CMEmu} is never applicable for independent random variables except in certain degenerate cases.
Note that this problem does not occur in the case of uncentred operators, where $\uu{C}_{X}$ (defined in \eqref{equ:UncentredCovarianceBlockStructure}) is typically injective.

Therefore, this paper aims to provide a rigorous theory of CMEs that addresses not only the above-mentioned pathology but also substantially generalises the assumptions under which CME can be performed.
We will treat both centred and uncentred \mbox{(cross-)}\-covariance operators, with particular emphasis on the centred case, and will also exhibit a connection to Gaussian conditioning in general Hilbert spaces.

\begin{enumerate}[label=(\arabic*)]
	\item The standard assumption $\bE[g(Y)|X=\quark]\in\cH$ for CME is rather restrictive.\footnote{\citet{fukumizu2013kernel} themselves write ``Note, however, that the assumptions [\dots] may not hold in general;
	we can easily give counterexamples for the latter in the case of Gaussian kernels.''.
	More precisely, for a Gaussian kernel $k$ on, say, $[0,1]$ and independent random variables $X$ and $Y$, $\bE[g(Y)|X=\quark]$ is a constant function for each $g\in\cG$, which does not lie in the RKHS corresponding to $k$ (unless it happens to be the zero function) by \citet[Corollary 5]{steinwart2006} or \citet[Corollary 4.44]{steinwart2008support}.}
	We show in \Cref{section:TheoryCentred} that this assumption can be significantly weakened in the case of centred kernel \mbox{(cross-)}\-covariance operators as defined in \eqref{equ:MeanAndCovarianceBlockStructure}:
	only $\bE[g(Y)|X=\quark]$ shifted by some constant function needs to lie in $\cH$ (\Cref{assump:weakerCME}).
	In this setting, the correct expression of the CME formula is
	\begin{equation}
		\label{equ:myCME}
		\mu_{Y|X = x}
		=
		\mu_{Y} + (C_{X}^{\dagger} C_{XY})^{\ast} \, (\varphi(x) - \mu_{X})
		\qquad
		\text{for $\bP_{X}$-a.e.\ $x\in\cX$,}
	\end{equation}
	where $A^{\ast}$ denotes the adjoint and $A^{\dagger}$ the Moore--Penrose pseudo-inverse of a linear operator $A$.
	As a first sanity check, note that this formula indeed yields $\mu_{Y|X = x} = \mu_{Y}$ when $X$ and $Y$ are independent.
	Similarly, as shown in \Cref{section:UncentredOperators}, for \emph{uncentred} kernel \mbox{(cross-)}\-covariance operators $\uu{C}_{X}$ and $\uu{C}_{XY}$ as defined later in \eqref{equ:UncentredCovarianceBlockStructure}, the correct formulation of the CME is
	\begin{equation}
		\label{equ:myCMEuncentred}
		\mu_{Y|X = x}
		=
		(\uu{C}_{X}^{\dagger} \uu{C}_{XY})^{\ast} \, \varphi(x)
		\qquad
		\text{for $\bP_{X}$-a.e.\ $x\in\cX$.}
	\end{equation}
	\item Furthermore, the assumption $\bE[g(Y)|X=\quark]\in\cH$, $g\in\cG$, is hard to check in most applications.
	To the best of our knowledge, the only verifiable condition that implies this assumption is given by \citet[Proposition 4]{fukumizu2004dimensionality_erratum}.
	However, this condition is itself difficult to check\footnote{The original condition of \citet[Proposition 4]{fukumizu2004dimensionality} was verifiable in certain situations, but the proposition itself turned out to be incorrect.
	The corrected condition in the erratum \citep{fukumizu2004dimensionality_erratum} seems to be much harder to check --- at least, no explicit case is given in which it is easier to verify than $\bE[g(Y)|X=\quark]$ being in $\cH$ for each $g\in\cG$. \label{footnote:FukumizuCondition}}.	
	We will present weaker assumptions (\Cref{assump:weakerCMElimit}) for the applicability of CMEs that hold whenever the kernel $k$ is characteristic.\footnote{A kernel $k$ is called \emph{characteristic} \citep{fukumizu2008nips} if the kernel mean embedding is injective as a function from $\{ \bQ \mid \bQ \text{ is a prob.\ meas.\ on } \cX \text{ with } \int_{\cX} \norm{ \varphi(x) }_{\cH} \, \rd \bQ (x) < \infty \}$ into $\cH$;
	naturally, the KME cannot be injective as a function from the space of random variables on $\cX$ to $\cH$, since random variables with the same law embed to the same point of $\cH$. \label{footnote:DefinitionCharacteristicKernel}}
	Characteristic kernels are well studied (see e.g.\ \citet{sriperumbudur2010hilbert}) and therefore provide a verifiable condition as desired.
	\item
	The applicability of \eqref{equ:CMEmu} requires the additional assumptions that $C_{X}$ is injective and that $\varphi(x)$ lies in the range of $C_{X}$,
	which is also hard to verify in practice.\footnote{Note that, typically, $\dim \cH = \infty$, in which case the compact operator $C_{X}$ cannot possibly be surjective.
	To verify that $\varphi(x)\in\ran C_{X}$, one would need to compute a singular value decomposition $C_{X} = \sum_{n \in \bN} \sigma_{n} h_{n} \otimes h_{n}$ of $C_{X}$ and check the Picard condition $\sum_{n \in \bN} \sigma_{n}^{-2} \innerprod{ \varphi(x) }{ h_{n} }_{\cH}^{2} < \infty$. \label{footnote:Picard}}
	We show that both assumptions can be avoided completely by replacing $C_{YX}C_{X}^{-1}$ in \eqref{equ:CMEmu} by $(C_{X}^{\dagger} C_{XY})^{\ast}$ in \eqref{equ:myCME} and $(\uu{C}_{X}^{\dagger} \uu{C}_{XY})^{\ast}$ in \eqref{equ:myCMEuncentred}, which turn out to be globally-defined and bounded operators under rather weak assumptions (Assumptions \ref{assump:weakCME} and \ref{assump:weakCMEuncentred}).
	\item
	The experienced reader will also observe that, modulo the replacement of $C_{YX}C_{X}^{-1}$ by $(C_{X}^{\dagger} C_{XY})^{\ast}$, \eqref{equ:myCME} is identical to the familiar Sherman--Morrison--Woodbury / Schur complement formula for conditional Gaussian distributions, a connection on which we will elaborate in detail in \Cref{section:ConnectionGaussianCME}.
	We call particular attention to the fact that the random variable $(\psi(Y),\varphi(X))$, which has no reason to be normally distributed, behaves very much like a Gaussian random variable in terms of its conditional mean.
\end{enumerate}

\begin{remark}
	\label{remark:AllResultsOnlyP_Xae}
	Note that we stated \eqref{equ:myCME} and \eqref{equ:myCMEuncentred} only for $\bP_X$-a.e.\ $x\in\cX$.
	This is the best that one can generally hope for, since the regular conditional probability $\bP_{Y|X=x}$ is uniquely determined only for $\bP_X$-a.e.\ $x\in\cX$ \citep[Theorem~5.3]{kallenberg2006foundations}.
	The work on CMEs so far completely ignores the fact that conditioning (especially on events of the form $X=x$) is not trivial, requires certain assumptions and, in general, yields results only for $\bP_X$-a.e.\ $x\in\cX$.
	In particular, the condition on $\bE[g(Y)|X=\quark]$ to lie in $\cH$ is ill posed, since these functions are uniquely defined only $\bP_{X}$-a.e., which in certain situations may be practically nowhere,
	and the same reasoning applies to the above-mentioned condition given by \citet[Proposition 4]{fukumizu2004dimensionality_erratum}.
	The existence and almost sure uniqueness of the regular conditional probability distribution $\bP_{Y|X=x}$ will be addressed in a precise manner in \Cref{section:CMESetup}.
\end{remark}

\begin{remark}
	\label{remark:EmpiricalIsHard}
	The focus of this paper is the validity of the non-regularised \emph{population} formulation of the CME in terms of the covariance structure of the KME of the data-generating distribution $\bP_{XY}$.
	The construction of valid CME formulae based on \emph{empirical sample data} (i.e.\ finitely many draws from $\bP_{XY}$) is vital in practice but is also much harder to analyse.
	We give some remarks on this setting in \Cref{section:empirical}.
\end{remark}

The rest of the paper is structured as follows.
\Cref{section:CMESetup} establishes the notation and problem setting, and motivates some of the assumptions that are made.
\Cref{section:AssumptionsForCMEs} discusses several critical assumptions for the applicability of the theory of CMEs and the relations among them.
\Cref{section:TheoryCentred} proceeds to build a rigorous theory of CMEs using centred covariance operators, with the main results being \Cref{thm:CMEproperly,thm:CMEproperlyLimit}, whereas \Cref{section:UncentredOperators} does the same for uncentred covariance operators, with the main results being \Cref{thm:CMEproperlyUncentered,thm:CMEproperlyUncenteredLimit}.
\Cref{section:GaussianConditioning} reviews the established theory for the conditioning of Gaussian measures on Hilbert spaces, and this is then used in \Cref{section:ConnectionGaussianCME} to rigorously connect the theory of CMEs to the conditioning of Gaussian measures, with the main result being \Cref{thm:ConnectionToGaussianRV}.
We give some closing remarks in \Cref{section:Closing}.
\Cref{section:TechnicalResults} contains various auxiliary technical results and \Cref{section:empirical} discusses the possible extension of our results to empirical estimation of CMEs.

\section{Setup and Notation}
\label{section:CMESetup}

Throughout this paper, when considering Hilbert-space valued random variables $U\in \cL^2(\Omega, \Sigma, \bP;\cG)$ and $V\in \cL^2(\Omega, \Sigma, \bP;\cH)$ defined over a probability space $(\Omega, \Sigma, \bP)$, the expected value $\bE[U] \defeq \int_{\Omega} U(\omega) \, \rd \bP (\omega)$ is meant in the sense of a Bochner integral \citep[Section II.2]{diestel1977}, as are the uncentred and centred cross-covariance operators
\begin{align*}
	\uu{\Cov}[U, V] \defeq \bE[U \otimes V]
	\qquad
	\text{and}
	\qquad
	\Cov[U, V] \defeq \bE[(U - \bE[U]) \otimes (V - \bE[V])]
\end{align*}
from $\cH$ into $\cG$, where, for $h \in \cH$ and $g \in \cG$, the outer product $g \otimes h \colon \cH \to \cG$ is the rank-one linear operator $(g \otimes h) (h') \defeq \innerprod{ h }{ h' }_{\cG}\, g$.
Naturally, we write $\uu{\Cov}[U]$ and $\Cov[U]$ for the covariance operators $\uu{\Cov}[U, U]$ and $\Cov[U, U]$ respectively, and all of the above reduces to the usual definitions in the scalar-valued case.
Both the centred and uncentred covariance operators of a square-integrable random variable are self-adjoint and non-negative, and --- in the separable Hilbert case that is our exclusive focus --- also trace-class (see \citet{baker1973joint, sazonov1958} for the centred case; the uncentred case follows from \citet[Corollary~2.1]{gohberg1969introduction}).

Our treatment of CMEs will operate under the following assumptions and notation:

\begin{assumption}
	\label{assumption:CME}
	\begin{enumerate}[label=(\alph*)]
		\item \label{notation:CMEsets} $(\Omega, \Sigma, \bP)$ is a probability space, $\cX$ is a measurable space, and $\cY$ is a Borel space.\footnote{A space $\cY$ is called a \emph{Borel space} if it is Borel isomorphic to a Borel subset of $[0,1]$.
		In particular, $\cY$ is a Borel space if it is Polish, i.e.\ if it is separable and completely metrisable;
		see \citet[Chapter~1]{kallenberg2006foundations}.}

		\item \label{notation:CMEkernels} $k \colon \cX \times \cX \to \bR$ and $\ell \colon \cY \times \cY \to \bR$ are symmetric and positive definite kernels, such that $k(x,\quark)$ and $\ell(y,\quark)$ are Borel-measurable functions for each $x\in\cX$ and $y\in\cY$.

		\item \label{notation:CMERKHS} $(\cH, \innerprod{ \quark }{ \quark }_{\cH})$ and $(\cG, \innerprod{ \quark }{ \quark }_{\cG})$ are the corresponding RKHSs, which we assume to be separable.
		Indeed, according to \citet{owhadi2017separability}, if the base sets $\cX$ and $\cY$ are separable absolute Borel spaces or analytic subsets of Polish spaces, then separability of $\cH$ and $\cG$ follows from the measurability of their respective kernels and feature maps.

		\item \label{notation:CMEfeaturemaps} The corresponding canonical feature maps are $\varphi \colon \cX \to \cH$, $\varphi(x) \defeq k(x,\quark)$, and $\psi \colon \cY \to \cG$, $\psi(y) \defeq \ell(y,\quark)$ respectively.
		Note that they satisfy the ``reproducing properties'' $\innerprod{ h }{ \varphi(x) }_{\cH} = h(x),\ \innerprod{ g }{ \psi(y) }_{\cG} = g(y)$ for $x\in\cX,y\in\cY,h\in\cH,g\in\cG$ and that $\varphi$ and $\psi$ are Borel measurable in view of \citet[Lemma~4.25]{steinwart2008support}.

		\item \label{notation:CMErvs} $X \colon \Omega \to \cX$ and $Y \colon \Omega \to \cY$ are random variables with distributions $\bP_{X}$ and $\bP_Y$ and joint distribution $\bP_{XY}$.
		\Cref{assumption:CME}\ref{notation:CMEsets} and \citet[Theorem~5.3]{kallenberg2006foundations} ensure the existence of a $\bP_{X}$-a.e.-unique regular version of the conditional probability distribution $\bP_{Y|X=x}$;
		the choice of a representative of $\bP_{Y|X=x}$ has no impact on our results.
		We assume that
		\begin{equation}
			\label{equ:FiniteSecondMomentOfUandV}
			\bE\big[\norm{ \varphi(X) }_{\cH}^{2} + \norm{ \psi(Y) }_{\cG}^{2} \big] < \infty,
		\end{equation}
		which also implies that $\cX_{Y} \defeq \{ x\in \cX \mid \bE\big[\norm{ \psi(Y) }_{\cG}^{2} | X=x\big] < \infty \}$ has full $\bP_X$ measure.\footnote{Otherwise, $\bE [\norm{ \psi(Y) }_{\cG}^{2}] = \bE [ \bE[\norm{ \psi(Y) }_{\cG}^{2} | X ] ]$ could not be finite.}
		Hence, $\cH\subseteq \cL^2(\bP_{X})$, $\cG\subseteq \cL^2(\bP_Y)$ and $\cG\subseteq \cL^2(\bP_{Y|X=x})$ for $x\in\cX_{Y}$ since, by the reproducing property and the Cauchy--Schwarz inequality,
		\begin{align}
			\notag
			\norm{ h }_{\cL^2(\bP_{X})}^2
			& =
			\int_{\cX} \absval{ h(x) }^2\, \rd \bP_{X}(x)
			=
			\int_{\cX} \absval{ \innerprod{ h }{ \varphi(x) }_{\cH} }^2\, \rd \bP_{X}(x) \\
			\label{equ:HnormStrongerThanL2norm}
			& \leq
			\int_{\cX} \norm{ h }_{\cH}^2 \norm{ \varphi(x) }_{\cH}^2\, \rd \bP_{X}(x)
			=
			\bE[\norm{ \varphi(X) }_\cH^2]\, \norm{ h }_\cH^2
		\end{align}
		for all $h\in\cH$, and similarly for $g\in\cG$ and $\bP_Y$, $\bP_{Y|X=x}$, $x \in \cX_{Y}$.
		It follows from \eqref{equ:HnormStrongerThanL2norm} that the inclusions $\iota_{\varphi,\bP_{X}}\colon \cH \hookrightarrow \cL^2(\bP_{X})$, $\iota_{\psi,\bP_Y}\colon \cG \hookrightarrow \cL^2(\bP_Y)$ are bounded linear operators and so is $\iota_{\psi,\bP_{Y|X=x}}\colon \cG \hookrightarrow \cL^2(\bP_{Y|X=x})$ for $x\in\cX_{Y}$.
		\item \label{notation:NoNontrivialZerosInH}
		We further assume that, for all $h\in\cH$, $h = 0$ $\bP_X$-a.e.\ in $\cX$ if and only if $h = 0$, i.e.\ almost everywhere equality separates points in $\cH$.
		This assumption clearly holds if $k$ is continuous and the topological support	of $\bP_{X}$ is all of $\cX$.\footnote{If $k$ is continuous, then so is every $h\in\cH$ \citep[Theorem 2.3]{saitoh2016theory}.
			So, if $h \in \cH$ and $\absval{ h(x) } = \varepsilon >0$ for some $x\in\cX$, then $\absval{ h } > \varepsilon/2$ on some open neighbourhood of $x$.
			Thus, if $\supp(\bP_X) = \cX$, then $h=0$ $\bP_X$-a.e.\ cannot hold.
		}
		It ensures that we can view $\cH$ as a subspace of $L^{2}(\bP_{X})$ and write $f\in\cH$ for functions $f\in L^2(\bP_{X})$ whenever there exists $h\in\cH$ (which, by this assumption, is unique) such that $f=h$ $\bP_{X}$-a.e.
		\item
		Several derivations will rely on the Bochner space $L^{2}(\bP_{X};\cF)$, which is isometrically isomorphic to the Hilbert tensor product space $L^{2}(\bP_{X})\otimes \cF$. Here, $\cF$ denotes another Hilbert space, which in our case will be equal to either $\bR$ or $\cG$.		
		Motivated by the discussion in \Cref{section:Intro} and the fact that $\Cov[f(X),f(X)] = \bV[f(X)]=0$ if and only if $f$ is $\bP_{X}$-a.e.\ constant, we consider the quotient space $L_{\cC}^{2}(\bP_{X}; \cF) \defeq L^{2}(\bP_{X}; \cF) / \cC$,\footnote{By the variational characterisation of the expected value $\bE[Z]$ of a random variable $Z\in L^{2}(\bP;\cF)$, $\bE[Z] = \argmin_{m\in\cF} \bE[\norm{Z-\bE[Z]}_{\cF}^{2}]$, the norm
		$\norm{ \quark }_{L_\cC^2(\bP_{X}; \cF)}$ coincides with the norm
		$\norm{ [f] } = \inf_{m\in\cC} \norm{ f-m }_{L^2(\bP_{X}; \cF)}$ induced on $L_{\cC}^2(\bP_{X}; \cF)$ by the norm $\norm{ \quark }_{L^2(\bP_{X}; \cF)}$.}
		\begin{align*}
			\cC
			\defeq
			\{ f\in L^{2}(\bP_{X}; \cF) \mid \exists c\in \cF\colon f(x) = c \text{ for $\bP_{X}$-a.e. } x\in\cX \},
			\\[0.6ex]
			\innerprod{ [f_1] }{ [f_2] }_{L_{\cC}^{2}(\bP_{X}; \cF)}
			\defeq \innerprod{ f_1 - \bE[f_1(X)] }{ f_2 - \bE[f_2(X)] }_{L^{2}(\bP_{X}; \cF)}.
		\end{align*}
		Note that, in the case $\cF = \bR$, we obtain $\innerprod{ [f_1] }{ [f_2]}_{L_{\cC}^{2}(\bP_{X}; \bR)} = \Cov[f_1(X),f_2(X)]$, in which case we will abbreviate the space $L^{2}(\bP_{X}; \bR)$ by $L^2(\bP_{X})$ or simply $L^2$ and the space $L_{\cC}^{2}(\bP_{X}; \bR)$ by $L_\cC^2(\bP_{X})$ or simply $L_\cC^2$.
		For any closed subspace $U\subseteq L^{2}(\bP_{X})$ we can view $U\otimes \cF$ as a subspace of $L^{2}(\bP_{X}; \cF)$ by the above isometry and identify $(U\otimes \cF)_{\cC} \defeq (U\otimes \cF) / ((U\otimes \cF) \cap \cC)$ with a subspace of $L_{\cC}^{2}(\bP_{X}; \cF)$.
		Note that, in the particular case $U\subseteq \cH$ (with $U$ closed in $L^{2}(\bP_{X})$), the construction of $U\otimes \cF$ and $(U\otimes \cF)_{\cC}$ treats $U$ as a subspace of $L^{2}(\bP_{X})$ and ignores the existence of the RKHS norm $\norm{\quark}_{\cH}$.
		
		\item We use overlines and superscripts to denote topological closures, so that, for example, $\overline{\cH_{\cC}}^{L_{\cC}^{2}}$ is the closure of $\cH_{\cC}$ with respect to the norm $\norm{ \quark }_{L_{\cC}^{2}}$ and $\overline{\cH}^{L^2}$ is the closure of $\cH$ with respect to the norm $\norm{ \quark }_{L^{2}}$.
		
		\item \label{notation:CMEmuC}
		Since $\varphi$ and $\psi$ are Borel measurable, $Z \defeq (\psi(Y),\varphi(X))$ is a well-defined $\cG\oplus\cH$-valued random variable;
		\eqref{equ:FiniteSecondMomentOfUandV} ensures that $Z$ has finite second moment, hence its mean $\bE[Z]$ and covariance operator $\Cov[Z]$ are well defined, Sazonov's theorem implies that $\Cov[Z]$ has finite trace, and we obtain the following block structures:
		\begin{equation}
			\label{equ:MeanAndCovarianceBlockStructure}
			\mu
			\defeq
			\bE\left[\begin{pmatrix}
			\psi(Y)\\ \varphi(X)
			\end{pmatrix}
			\right]
			=
			\begin{pmatrix}
			\mu_{Y}\\ \mu_{X}
			\end{pmatrix}
			,
			\qquad
			C
			\defeq
			\Cov\left[\begin{pmatrix}
			\psi(Y)\\ \varphi(X)
			\end{pmatrix}
			\right]
			=
			\begin{pmatrix}
			C_{Y} & C_{YX}
			\\
			C_{XY} & C_{X}
			\end{pmatrix} ,
		\end{equation}
		where the components
		\begin{align*}
			\mu_{Y}
			& \defeq
			\bE[\psi(Y)],&
			C_{Y}
			& \defeq
			\Cov[\psi(Y)],&
			C_{YX}
			& \defeq
			\Cov[\psi(Y),\varphi(X)],
			\\
			\mu_{X}
			& \defeq
			\bE[\varphi(X)],&
			C_{XY}
			& \defeq
			\Cov[\varphi(X),\psi(Y)],&
			C_{X}
			& \defeq
			\Cov[\varphi(X)]
		\end{align*}
		are called the \emph{kernel mean embeddings} (KME) and \emph{kernel \mbox{(cross-)}\-covariance operators}, respectively.
		Note that $C_{XY}^{\ast} = C_{YX}$ and that the reproducing properties translate to the KMEs and covariance operators as follows:
		for arbitrary $h,h'\in\cH$ and $g\in\cG$,
		\begin{align*}
			\innerprod{ h }{ \mu_{X} }_{\cH}
			& =
			\bE[h(X)] , \\
			\innerprod{ h }{ C_{X} h' }_{\cH}
			& =
			\Cov[h(X),h'(X)] , \\
			\innerprod{ h }{ C_{XY} g }_{\cH}
			& =
			\Cov[h(X),g(Y)] ,
		\end{align*}
		and so on.
		We are further interested in the \emph{conditional kernel mean embedding} and the \emph{conditional kernel covariance operator} given by
		\begin{equation}
			\label{equ:MeanAndCovarianceConditional}
			\mu_{Y|X=x}
			=
			\bE[\psi(Y)|X=x],
			\qquad
			C_{Y|X=x}
			=
			\Cov[\psi(Y)|X=x],
			\qquad
			x\in \cX_{Y}.
		\end{equation}
		We set $\mu_{Y|X=x} \defeq 0$ on the $\bP_{X}$-null set $\cX\setminus \cX_{Y}$.
		Similarly, $Z = (\psi(Y),\varphi(X))$ has the uncentred kernel covariance structure
		\begin{equation}
			\label{equ:UncentredCovarianceBlockStructure}
			\uu{C}
			\defeq
			\uu{\Cov}\left[\begin{pmatrix}
			\psi(Y)\\ \varphi(X)
			\end{pmatrix}
			\right]
			=
			\begin{pmatrix}
			\uu{C}_{Y} & \uu{C}_{YX}
			\\
			\uu{C}_{XY} & \uu{C}_{X}
			\end{pmatrix} ,
		\end{equation}
		where $\uu{C}_{Y} \defeq \uu{\Cov}[\psi(Y)]$ etc.
		Note that, for $f_1,f_2\in L^2(\bP_{X})$, $\uu\Cov(f_1(X),f_2(X)) = \innerprod{ f_{1} }{ f_{2} }_{L^2(\bP_{X})}$, and similarly for functions of $Y$.
		
		\item For $g\in\cG$ we let $f_g(x) \defeq \bE[g(Y)| X=x]$.
		More precisely,
		\[
			f_g(x) \defeq
			\begin{cases}
			\bE[g(Y)| X=x], & \text{for } x\in\cX_{Y},\\
			0, & \text{otherwise.}
			\end{cases}
		\]
		These functions $f_{g}$ will be of particular importance since, for $g=\psi(y)$, $y\in\cY$ and $x\in\cX$, we obtain $f_{\psi(y)}(x) = \mu_{Y|X = x}(y)$, our main object of interest (note that $\mu_{Y|X = x}\in\cG$ for each $x\in\cX$, and so its pointwise evaluation at $y\in\cY$ is meaningful).
		By \eqref{equ:FiniteSecondMomentOfUandV}, \eqref{equ:HnormStrongerThanL2norm}, and the law of total expectation, $f_g\in L^2(\bP_{X})$ for every $g\in\cG$, since
		\begin{align*}
			\norm{ f_g }_{L^2(\bP_{X})}
			& =
			\bE[f_g(X)^2]
			=
			\bE\left[\bE[g(Y)|X]^2\right]
			\leq
			\bE\left[\bE[g(Y)^2|X]\right] \\
			& =
			\bE[g(Y)^2]
			=
			\norm{ g }_{\cL^2(\bP_Y)}
			<
			\infty.
		\end{align*}
		Further, another application of the law of total expectation yields
		\begin{equation}
		\label{equ:f_gExpectation}
		\bE[f_g(X)] = \bE[g(Y)],
		\qquad
		\bE[f_{\psi(y)}(X)] = \mu_{Y}(y).
		\end{equation}
		
%

		\item For a linear operator $A$ between Hilbert spaces, $A^{\dagger}$ denotes its Moore--Penrose pseudo-inverse, i.e.\ the unique extension of $A |_{(\ker A)^{\perp}}^{-1} \colon \ran A \to (\ker A)^{\perp}$ to a linear operator $A^{\dagger}$ defined on $\dom A^{\dagger} \defeq (\ran A) \oplus (\ran A)^{\perp}$ subject to the criterion that $\ker A^{\dagger} = (\ran A)^{\perp}$.
		In general, $\dom A^{\dagger}$ is a dense but proper subpace and $A^{\dagger}$ is an unbounded operator;
		global definition and boundedness occur precisely when $\ran A$ is closed;
		see e.g.\ \citet[Section~2.1]{engl1996regularization}.
	\end{enumerate}
\end{assumption}

\begin{remark}
	Measurability of $k(x,\quark)$ and $\ell(y,\quark)$ together with the separability of $\cH$ and $\cG$ guarantee the measurability of $\varphi$ and $\psi$ \citep[Lemma~4.25]{steinwart2008support}.
	Separability of $\cH$ and $\cG$ is also needed for Gaussian conditioning (see \citet{owhadi2015conditioning} and \Cref{section:GaussianConditioning}), for the existence of a countable orthonormal basis of $\cH$, and to ensure that weak (Pettis) and strong (Bochner) measurability of Hilbert-valued random variables coincide.
\end{remark}

\section{The Crucial Assumptions for CMEs}
\label{section:AssumptionsForCMEs}

This section discusses various versions of the assumption $f_g\in\cH$ under which we are going to prove various versions of the CME formula (note that, by \Cref{assumption:CME}\ref{notation:NoNontrivialZerosInH}, their formulations are unambiguous).

\begin{assumpA}
	\label{assump:strongCME}
	For all $g\in\cG$, $f_g\in\cH$.
\end{assumpA}

\begin{assumpB}
	\label{assump:weakerCME}
	For all $g\in\cG$ there exists a function $h_g\in\cH$ and a constant $c_g\in\bR$ such that $h_g = f_g - c_g$ $\bP_{X}$-a.e.\ in $\cX$.
\end{assumpB}

\begin{assumpC}
	\label{assump:weakCME}
	For all $g\in\cG$ there exists a function $h_g\in\cH$ such that
	\[
	\Cov[h_g(X)-f_g(X),h(X)] = 0\qquad\text{for all }h\in\cH.
	\]
	In this case we denote $c_g \defeq \bE[f_g(X)-h_g(X)]$ (in conformity with \Cref{assump:weakerCME}).
\end{assumpC}

\begin{assumpCdoubleprime}
	\label{assump:weakCMEuncentred}
	For all $g\in\cG$ there exists a function $h_g\in\cH$ such that
	\[
		\uu\Cov[h_g(X)-f_g(X),h(X)] = \innerprod{ h_g-f_g }{ h }_{ L^2(\bP_{X})} = 0\qquad\text{for all }h\in\cH.
	\]
\end{assumpCdoubleprime}

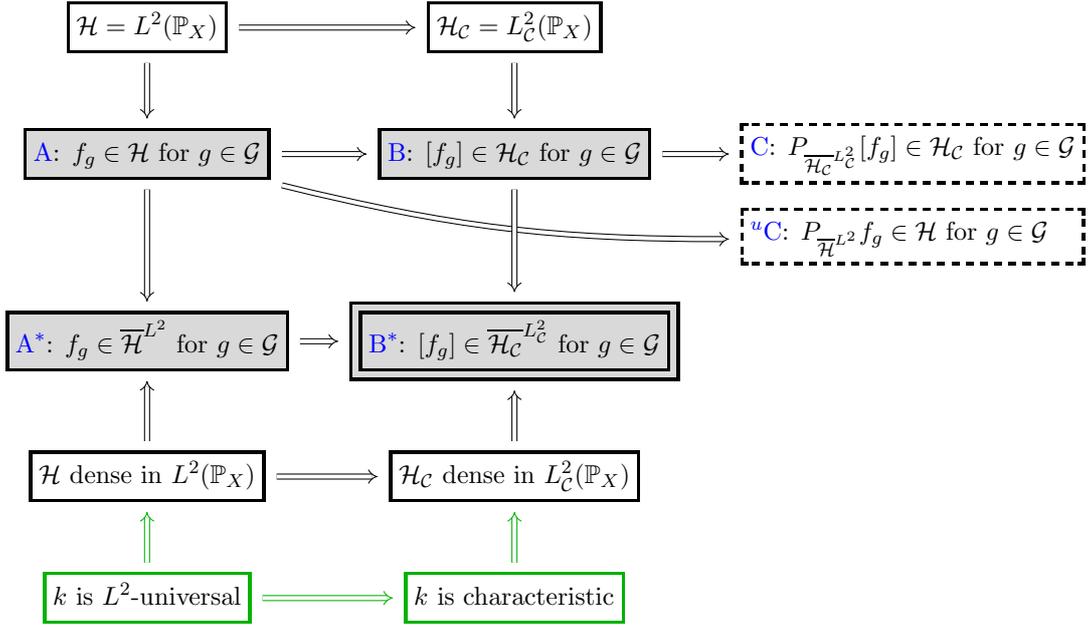
\begin{figure}[t]
	\centering
	\adjustbox{scale=0.89}{
		\begin{tikzcd}[column sep=1.5em,row sep=2em]	
			\mlnode{$\cH = L^2(\bP_{X})$}
			\arrow[r,Rightarrow]
			\arrow[d,Rightarrow]
			&
			\mlnode{$\cH_{\cC} = L_{\cC}^{2}(\bP_{X})$}
			\arrow[d,Rightarrow]
			&
			\\
			\filledmlnode{\ref{assump:strongCME}: $f_g\in\cH$ for $g\in\cG$}
			\arrow[r,Rightarrow]
			\arrow[dd,Rightarrow]
			\arrow[bend right=7,drr,Rightarrow]
			&
			\filledmlnode{\ref{assump:weakerCME}: $[f_g]\in\cH_{\cC}$ for $g\in\cG$}
			\arrow[r,Rightarrow]
			\arrow[dd,Rightarrow]
			&
			\dashedmlnode{\ref{assump:weakCME}: $P_{\overline{\cH_{\cC}}^{ L_{\cC}^{2}}}[f_g] \in \cH_{\cC}$ for $g\in\cG$}
			\\[-0.7cm]
			&
			&
			\dashedmlnode{\ref{assump:weakCMEuncentred}: $P_{\overline{\cH}^{ L^2}}f_g \in \cH$ for $g\in\cG$\hspace{0.4cm}\ }
			\\[-0.5cm]
			\filledmlnode{\ref{assump:strongCMElimit}: $f_{g}\in\overline{\cH}^{ L^2}$ for $g\in\cG$}
			\arrow[r,Rightarrow]
			&
			\filledmlnode{\filledmlnode{\ref{assump:weakerCMElimit}: $[f_{g}]\in
				\overline{\cH_{\cC}}^{ L_{\cC}^2}$ for $g\in\cG$			
			}}
			\arrow[d,Leftarrow]
			&
			\\
			\mlnode{$\cH \text{ dense in } L^2(\bP_{X})$}
			\arrow[r,Rightarrow]
			\arrow[u,Rightarrow]
			&
			\mlnode{$\cH_{\cC} \text{ dense in } L_{\cC}^{2}(\bP_{X})$}
			\arrow[d,Leftarrow,boxgreen]
			&
			\\
			\greenmlnode{$k$ is $ L^2$-universal}
			\arrow[u,Rightarrow,boxgreen]
			\arrow[r,Rightarrow,boxgreen]			
			&
			\greenmlnode{$k$ is characteristic}
			&			
		\end{tikzcd}
	}
	\caption{A hierarchy of CME-related assumptions.		
	Sufficient conditions for validity of the CME formula are indicated by solid boxes while the insufficient Assumptions \ref{assump:weakCME} and \ref{assump:weakCMEuncentred}, indicated by dashed boxes, have several strong theoretical implications and \Cref{assump:weakCME} has a beautiful connection to Gaussian conditioning (\Cref{thm:EquivalenceAssumptionCAndCompatibility}).			
	\Cref{assump:weakerCMElimit} is the most favorable one, since it is verifiable in practice, and, by \Cref{lemma:EquivalenceCharacteristicDense}, in particular is fulfilled if the kernel is universal or even just characteristic (marked in green).
	The shaded boxes correspond to \Cref{thm:CMEproperly,thm:CMEproperlyLimit,thm:CMEproperlyUncentered,thm:CMEproperlyUncenteredLimit}.}	
	\label{fig:HierarchyOfAssumptions}
\end{figure}

\begin{remark}
	Note that \ref{assump:strongCME} $\implies$ \ref{assump:weakerCME} $\implies$ \ref{assump:weakCME}, that \ref{assump:strongCME} $\implies$ \ref{assump:weakCMEuncentred}, that \ref{assump:weakCME} $\implies$ \ref{assump:weakerCME} if $\cH_{\cC}\subseteq L_{\cC}^2(\bP_{X})$ is dense, and that \ref{assump:weakCME} $\implies$ \ref{assump:strongCME} and \ref{assump:weakCMEuncentred} $\implies$ \ref{assump:strongCME} if $\cH\subseteq L^2(\bP_{X})$ is dense.
\end{remark}

Unlike \Cref{assump:strongCME}, Assumptions \ref{assump:weakerCME} and \ref{assump:weakCME} do not require the unfavourable property $\mathds{1}_\cX\in\cH$ for independent random variables $X$ and $Y$.
Instead, this case reduces to the trivial condition $0\in\cH$.
At the same time, the proofs of the key properties of CMEs are not affected by replacing \Cref{assump:strongCME} with \Cref{assump:weakerCME} as long as we work with centred operators (see \Cref{thm:RelationC_XandC_XY,thm:CMEproperly} below).
Therefore, it is surprising that this modification has not been considered earlier, even though the issues with independent random variables have been observed before \citep{fukumizu2013kernel}.
One reason might be that, instead of centred operators, researchers started using uncentred ones, for which such a modification is not feasible.

\Cref{assump:weakCME}, on the other hand, is not strong enough for proving the main formula for CMEs (the last statement of \Cref{thm:CMEproperly}).
Clearly, this cannot be expected:
If $\cX$ and $\cG$ are reasonably large, but $\cH$ is not rich enough, e.g.\ $\cH = \{ 0 \}$ or $\cH=\spn \{ \mathds{1}_\cX \}$, then no map from $\cH$ to $\cG$ can cover sufficiently many kernel mean embeddings,
in particular the embeddings of the conditional probability $\bP_{Y|X=x}$ for various $x$ (while \Cref{assump:weakCME} is trivially fulfilled for these choices of $\cH$).
The weakness of \Cref{assump:weakCME} lies in the fact that it only requires the vanishing of the orthogonal projection of $[h_g]-[f_g]$ onto $\cH_{\cC}$.
Only if $\cH_{\cC}$ is rich enough (e.g.\ if it is dense in $ L_{\cC}^2$) can this condition have useful implications.
A similar reasoning applies to \Cref{assump:weakCMEuncentred}.

While it is nice to have a weaker form of \Cref{assump:strongCME}, the Assumptions \ref{assump:strongCME}, \ref{assump:weakerCME} and \ref{assump:weakCME} remain hard to check in practice.
Another condition, provided by \citet[Proposition 4]{fukumizu2004dimensionality_erratum}, is also hard to verify in most applications.\textsuperscript{\ref{footnote:FukumizuCondition}}
Since characteristic kernels are well studied in the literature, \Cref{lemma:EquivalenceCharacteristicDense} gives hope for a verifiable condition for the applicability of CMEs:
it states that $\cH_{\cC}$ is dense in $ L_{\cC}^{2}(\bP_{X})$ whenever the kernel $k$ is characteristic.
So, if the denseness of $\cH_{\cC}$ in $ L_{\cC}^{2}(\bP_{X})$ were sufficient for performing CMEs, then the condition that $k$ be characteristic would be sufficient as well, thus providing a favorable criterion for the applicability of formula \eqref{equ:myCME}.
A similar argumentation applies to the condition that $\cH$ is dense in $ L^{2}(\bP_{X})$ and the condition that $k$ is $ L^2$-universal.\footnote{A kernel $k$ on $\cX$ is called \emph{$ L^2$-universal} \citep{sriperumbudur2011universality} if it is Borel measurable and bounded and if $\cH$ is dense in $ L^{2}(\bQ)$ for any probability measure $\bQ$ on $\cX$.
Any $ L^2$-universal kernel is characteristic \citep{sriperumbudur2011universality}.}
Unfortunately, neither condition implies \Cref{assump:weakerCME}.
Therefore, we will consider the following slightly weaker versions of Assumptions~\ref{assump:strongCME} and \ref{assump:weakerCME}, under which CMEs can be performed if one allows for certain finite-rank approximations of the \mbox{(cross-)}\-covariance operators:

\begin{assumpAprime}
	\label{assump:strongCMElimit}
	For all $g\in\cG$, $f_g\in\overline{\cH}^{ L^2}$.
\end{assumpAprime}

\begin{assumpBprime}
	\label{assump:weakerCMElimit}
	For all $g\in\cG$ there exists a function $h_g\in\overline{\cH}^{ L^2}$ and a constant $c_g\in\bR$ such that $h_g = f_g - c_g$ $\bP_X$-a.e.\ in $\cX$.
\end{assumpBprime}


Note that \Cref{assump:weakCME} and \Cref{assump:weakCMEuncentred} have no weaker versions, since they would become trivial if $h_g\in\cH_{\cC}$ were replaced by $h_g\in\overline{\cH_{\cC}}^{ L_{\cC}^{2}}$ and $h_g\in\cH$ by $h_g\in\overline{\cH}^{ L^2}$ respectively.

\begin{remark}
	\label{remark:reformulationA1A2A3}
	In terms of the spaces $ L_{\cC}^2$ and $\cH_{\cC}$, Assumptions \ref{assump:strongCME}--\ref{assump:weakerCMElimit} can be reformulated as follows:
	For all $g \in \cG$,
	\begin{itemize}
		\item[(\ref{assump:strongCME})]
		$f_{g} \in\cH$;
		\item[(\ref{assump:weakerCME})]
		$[f_{g}]\in\cH_{\cC}$;
		\item[(\ref{assump:weakCME})]
		the orthogonal projection $P_{\overline{\cH_{\cC}}^{ L_{\cC}^{2}}} [f_{g}]$ of $[f_{g}]$ onto $\overline{\cH_{\cC}}^{ L_{\cC}^{2}}$ lies in $\cH_{\cC}$;
		\item[(\ref{assump:weakCMEuncentred})]
		the orthogonal projection $P_{\overline{\cH}^{ L^2}} f_{g}$ of $f_{g}$ onto $\overline{\cH}^{L^2}$ lies in $\cH$;
		\item[(\ref{assump:strongCMElimit})]
		$f_{g} \in \overline{\cH}^{L^2}$;
		\item[(\ref{assump:weakerCMElimit})]
		$[f_{g}] \in \overline{\cH_{\cC}}^{L_{\cC}^2}$.
	\end{itemize}
\end{remark}

In summary, we consider the hierarchy of assumptions illustrated in \Cref{fig:HierarchyOfAssumptions}.
The main contributions of this paper are rigorous proofs of three versions of the CME formula under various assumptions:
\begin{itemize}
	\item \Cref{thm:CMEproperly} uses \Cref{assump:weakerCME} and centred operators.
	\item \Cref{thm:CMEproperlyLimit} uses \Cref{assump:weakerCMElimit} and finite-rank approximations of centred operators.
	\item \Cref{thm:CMEproperlyUncentered} uses \Cref{assump:strongCME} and uncentred operators.
	\item \Cref{thm:CMEproperlyUncenteredLimit} uses \Cref{assump:strongCMElimit} and finite-rank approximations of uncentred operators.
\end{itemize}
Note that the theorems for uncentred covariance operators require stronger assumptions than their centred counterparts and that \Cref{thm:CMEproperlyUncenteredLimit} provides weaker statements than its centred analogue \Cref{thm:CMEproperlyLimit} (we show only the convergence in $L^2(\bP_{X};\cG)$, which does not guarantee convergence for $\bP_{X}$-a.e.\ $x\in\cX$).

\section{Theory for Centred Operators}
\label{section:TheoryCentred}

In this section we will formulate and prove two versions of the CME formula \eqref{equ:myCME} --- the original one under \Cref{assump:weakerCME} and a weaker version involving finite-rank approximations $C_{X}^{(n)},C_{XY}^{(n)}$ of the \mbox{(cross-)}\-covariance operators under \Cref{assump:weakerCMElimit}.
The following theorem demonstrates the importance of \Cref{assump:weakCME} (which follows from \Cref{assump:weakerCME}).
It implies that the range of $C_{XY}$ is contained in that of $C_{X}$, making the operator $C_{X}^\dagger C_{XY}$ well defined.
By \Cref{theorem:DouglasExistenceQ} it is even a bounded operator, which is a non-trivial result requiring the application of the closed graph theorem.\footnote{Furthermore, by \Cref{lemma:Carleman_trick}, this operator is actually Hilbert--Schmidt when thought of as an operator taking values in the appropriate $L^{2}$ space rather than in the RKHS. \label{footnote:Carleman_trick}}

Similar considerations cannot be performed, in general, under \Cref{assump:weakerCMElimit} alone: it can no longer be expected that $\ran C_{XY} \subseteq \ran C_{X}$, which is why we must introduce the above-mentioned finite-rank approximations in order to guarantee that $\ran C_{XY}^{(n)} \subseteq \ran C_{X}^{(n)}$.

In summary, \Cref{assump:weakerCME} allows for the simple CME formula \eqref{equ:myCME} by \Cref{thm:RelationC_XandC_XY}, while under \Cref{assump:weakerCMElimit} we must make a detour using certain approximations.
Note that this distinction is very similar to the theory of Gaussian conditioning in Hilbert spaces introduced by \citet{owhadi2015conditioning} and recapped in \Cref{section:GaussianConditioning} below, a connection that will be elaborated upon in detail in \Cref{section:ConnectionGaussianCME}.


\begin{theorem}
	\label{thm:RelationC_XandC_XY}%
	Under \Cref{assumption:CME}, the following statements are equivalent:
	\begin{enumerate}[label=(\roman*)]
		\item \label{statement1lemma:RelationC_XandC_XY} \Cref{assump:weakCME} holds.
		\item \label{statement2lemma:RelationC_XandC_XY} For each $g\in\cG$ there exists $h_g\in\cH$ such that
		$
		C_{X} h_g
		=
		C_{XY}g.
		$
		\item \label{statement3lemma:RelationC_XandC_XY} $\ran C_{XY}\subseteq \ran C_{X}$.
	\end{enumerate}
\end{theorem}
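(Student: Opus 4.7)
The plan is to prove the chain of equivalences (ii) $\Leftrightarrow$ (iii) trivially and (i) $\Leftrightarrow$ (ii) via the reproducing-property translations of the covariance operators combined with the tower property of conditional expectation.

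First I would observe that (ii) $\Leftrightarrow$ (iii) is immediate from the definition of the range: (iii) says that every element of the form $C_{XY}g$ lies in $\ran C_{X}$, and this is precisely the assertion that a preimage $h_{g}\in\cH$ under $C_{X}$ exists for each $g\in\cG$.

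The main work is (i) $\Leftrightarrow$ (ii). For this I would use the reproducing-property identities recorded in \Cref{assumption:CME}\ref{notation:CMEmuC}, namely
\begin{align*}
\innerprod{ h }{ C_{X} h_{g} }_{\cH}
&= \Cov[h(X), h_{g}(X)], \\
\innerprod{ h }{ C_{XY} g }_{\cH}
&= \Cov[h(X), g(Y)],
\end{align*}
valid for every $h, h_{g} \in \cH$ and $g\in\cG$. The key bridge is to rewrite $\Cov[h(X), g(Y)]$ in terms of $f_{g}$: by the tower property and the definition $f_{g}(x) = \bE[g(Y)\mid X=x]$,
\[
\Cov[h(X), g(Y)]
= \bE[h(X) g(Y)] - \bE[h(X)]\bE[g(Y)]
= \bE[h(X) f_{g}(X)] - \bE[h(X)]\bE[f_{g}(X)]
= \Cov[h(X), f_{g}(X)],
\]
where I use \eqref{equ:f_gExpectation} for the equality of means. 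Subtracting then gives
\[
\innerprod{ h }{ C_{X} h_{g} - C_{XY} g }_{\cH}
= \Cov[h(X), h_{g}(X) - f_{g}(X)]
\qquad\text{for all } h\in\cH.
\]

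From this identity the equivalence (i) $\Leftrightarrow$ (ii) follows by a density/separation argument: the equation $C_{X} h_{g} = C_{XY} g$ holds in $\cH$ if and only if its inner product with every $h\in\cH$ vanishes (since $\cH$ is dense in itself), which by the displayed identity is exactly \Cref{assump:weakCME}. I would then close the cycle (i) $\Rightarrow$ (ii) $\Rightarrow$ (iii) $\Rightarrow$ (i), or equivalently present the three individual equivalences.

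The only subtle point, rather than an obstacle, is making sure the tower-property calculation is justified: this needs $h(X)g(Y) \in L^{1}(\bP)$, which is guaranteed by Cauchy--Schwarz together with \eqref{equ:FiniteSecondMomentOfUandV} and the bound \eqref{equ:HnormStrongerThanL2norm} (so $h\in\cH\subseteq L^{2}(\bP_{X})$ and $g\in\cG\subseteq L^{2}(\bP_{Y})$). Everything else is essentially bookkeeping with the reproducing property.
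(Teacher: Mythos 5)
Your proposal is correct and follows essentially the same route as the paper: the equivalence (ii) $\Leftrightarrow$ (iii) is dismissed as a reformulation, and (i) $\Leftrightarrow$ (ii) is obtained from the identity $\Cov[h(X),f_g(X)]=\innerprod{h}{C_{XY}g}_{\cH}$, which the paper isolates as \Cref{lemma:RelationCovAndC_XY} and which you re-derive inline via the tower property. Your added remark on the integrability of $h(X)g(Y)$ is a harmless elaboration of what \eqref{equ:FiniteSecondMomentOfUandV} and \eqref{equ:HnormStrongerThanL2norm} already guarantee.
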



\begin{proof}
	Note that \ref{statement3lemma:RelationC_XandC_XY} is just a reformulation of \ref{statement2lemma:RelationC_XandC_XY}, so we only have to prove \ref{statement1lemma:RelationC_XandC_XY}$\iff$\ref{statement2lemma:RelationC_XandC_XY}.
	Let $g\in\cG$ and $h,h_g\in\cH$.
	By \Cref{lemma:RelationCovAndC_XY}, $\Cov [h(X), f_g(X)]= \innerprod{ h }{ C_{XY} g }_{\cH}$, and so
	\begin{align*}
	\Cov [h(X), h_g(X)] = \Cov [h(X), f_g(X)]\ \forall h\in\cH
	&\iff
	\innerprod{ h }{ C_{X} h_g }_{\cH} = \innerprod{ h }{ C_{XY} g }_{\cH} \ \forall h\in\cH \\
	& \iff
	C_{X} h_g = C_{XY} g .
	\end{align*}
\end{proof}


Note that \Cref{assump:weakCME} implies that $[h_g]\in\cH_\cC$ is the orthogonal projection of $[f_g]\in L_{\cC}^{2}$ onto $\cH_\cC$ with respect to $\innerprod{ \quark }{ \quark }_{ L_{\cC}^{2}}$ (see the reformulation of \Cref{assump:weakCME} in \Cref{remark:reformulationA1A2A3}).
Therefore, there might be some ambiguity in the choice of $h_g\in\cH$ if $\cH$ contains constant functions.
However, there is a particular choice of $h_g$ that always works:


\begin{proposition}
	\label{prop:SpecificChoiceh_g}
	Under \Cref{assumption:CME}, if \Cref{assump:weakerCME} or \Cref{assump:weakCME} holds, then $h_g$ may be chosen as
	\begin{equation}
		\label{equ:StandardChoiceh_g}
		h_g = C_{X}^\dagger C_{XY}g.
	\end{equation}
	More precisely, if \Cref{assump:weakCME} holds, then $\Cov[(C_{X}^\dagger C_{XY}g)(X)-f_g(X),h(X)] = 0$ for all $h\in\cH$ and $g\in\cG$;
	and if \Cref{assump:weakerCME} holds, or even just $f_{g}\in\cH_{\cC}$ for some $g\in\cG$, then there exists a constant $c_g\in\bR$ such that $\bP_X$-almost everywhere $f_g = c_g + C_{X}^\dagger C_{XY}g$.
\end{proposition}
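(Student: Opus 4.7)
The plan is to use \Cref{thm:RelationC_XandC_XY} to guarantee that $C_{XY}g \in \dom C_X^\dagger$ and then exploit the standard identity $C_X^\dagger C_X = P_{(\ker C_X)^\perp}$ together with the characterization of $\ker C_X$ as consisting of $\bP_X$-a.e.\ constant functions inside $\cH$.

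For the first assertion, \Cref{assump:weakCME} together with \Cref{thm:RelationC_XandC_XY} gives $\ran C_{XY} \subseteq \ran C_X$, so $h_g \defeq C_X^\dagger C_{XY} g$ is a well-defined element of $(\ker C_X)^\perp \subseteq \cH$, and the defining property of the Moore--Penrose pseudo-inverse on its range yields $C_X h_g = C_{XY} g$. By \Cref{lemma:RelationCovAndC_XY} and the definition of $C_X$, for every $h \in \cH$,
\[
\Cov[h_g(X) - f_g(X), h(X)] = \innerprod{h}{C_X h_g}_\cH - \innerprod{h}{C_{XY} g}_\cH = 0,
\]
which is exactly what \Cref{assump:weakCME} requires of $h_g$.

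For the second assertion, suppose that $f_g \in \cH_\cC$, so there exist $\tilde h_g \in \cH$ and $c_g \in \bR$ with $\tilde h_g = f_g - c_g$ $\bP_X$-a.e. Since additive constants do not affect covariances, \Cref{lemma:RelationCovAndC_XY} yields
\[
\innerprod{h}{C_X \tilde h_g}_\cH = \Cov[h(X), \tilde h_g(X)] = \Cov[h(X), f_g(X)] = \innerprod{h}{C_{XY} g}_\cH
\]
for every $h \in \cH$, so $C_X \tilde h_g = C_{XY} g$. Applying $C_X^\dagger$ and using $C_X^\dagger C_X = P_{(\ker C_X)^\perp}$ on all of $\cH$ gives $C_X^\dagger C_{XY} g = P_{(\ker C_X)^\perp} \tilde h_g$, so $\tilde h_g - C_X^\dagger C_{XY} g \in \ker C_X$.

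The remaining step is to show that every $h \in \ker C_X$ is $\bP_X$-a.e.\ constant: indeed, $\bV[h(X)] = \innerprod{h}{C_X h}_\cH = 0$ forces $h(X) = \bE[h(X)]$ $\bP_X$-a.e. Hence $\tilde h_g = C_X^\dagger C_{XY} g + c'$ $\bP_X$-a.e.\ for some $c' \in \bR$, and combining with $f_g = c_g + \tilde h_g$ $\bP_X$-a.e.\ delivers $f_g = (c_g + c') + C_X^\dagger C_{XY} g$ $\bP_X$-a.e., yielding the desired constant (still denoted $c_g$). The main obstacle is the careful handling of the (generally unbounded) pseudo-inverse in the typically non-closed-range setting — in particular justifying $C_X^\dagger C_X = P_{(\ker C_X)^\perp}$ on all of $\cH$ — and absorbing the $\ker C_X$-ambiguity into a single additive constant without losing the $\bP_X$-a.e.\ statement.
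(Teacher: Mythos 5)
Your proof is correct and follows essentially the same route as the paper's: well-definedness of $C_X^\dagger C_{XY}g$ via \Cref{thm:RelationC_XandC_XY}, the identity $C_X C_X^\dagger C_{XY}g = C_{XY}g$ combined with \Cref{lemma:RelationCovAndC_XY} for the first assertion, and reduction of the second assertion to the fact that $\ker C_X$ consists of the $\bP_X$-a.e.\ constant functions (which the paper delegates to \Cref{lemma:kernelC_Xconstants} and you re-derive inline via $\bV[h(X)] = \innerprod{h}{C_X h}_\cH$). The only cosmetic difference is that you pass through $C_X^\dagger C_X = P_{(\ker C_X)^\perp}$ where the paper directly observes $h_g' - C_X^\dagger C_{XY}g \in \ker C_X$; both are standard and valid without any closed-range hypothesis.
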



\begin{proof}
	By \Cref{thm:RelationC_XandC_XY}, \eqref{equ:StandardChoiceh_g} is well defined.
	Under \Cref{assump:weakCME}, for all $g\in\cG$ and $h\in\cH$, and appealing to \Cref{thm:RelationC_XandC_XY} and \Cref{lemma:RelationCovAndC_XY},
	\begin{align*}
		\Cov [h(X), (C_{X}^\dagger C_{XY}g)(X)]
		=
		\innerprod{ h }{ C_{X} C_{X}^\dagger C_{XY} g }_{\cH}
		=
		\innerprod{ h }{ C_{XY} g }_{\cH}
		= 
		\Cov [h(X), f_g(X)] .
	\end{align*}
	If $f_{g}\in\cH_{\cC}$ for some $g\in\cG$, then there exists a function $h_g'\in\cH$ and a constant $c_g'\in\bR$ such that, $\bP_X$-a.e.\ in $\cX$, $h_g' = f_g - c_g'$.
	\Cref{thm:RelationC_XandC_XY} implies that $C_{X} h_g'=C_{XY}g$, and so \Cref{lemma:kernelC_Xconstants} implies that $h_g' - C_{X}^\dagger C_{XY} g$ is constant $\bP_X$-a.e.
	Hence, $f_g - C_{X}^\dagger C_{XY} g$ is constant $\bP_X$-a.e.
\end{proof}


We now give our first main result, the rigorous statement of the CME formula for centred \mbox{(cross-)}\-covariance operators.
In fact, we give two results:
a ``weak'' result \eqref{equ:CMEmuNEWProjection} under \Cref{assump:weakCME} in which the CME, as a function on $\cX$, holds only when tested against elements of $\cH$ in the $ L^{2}(\bP_X)$ inner product, and a ``strong'' almost-sure equality in $\cG$ \eqref{equ:CMEmuNEW} under \Cref{assump:weakerCME}.


\begin{theorem}[Centred CME]
	\label{thm:CMEproperly}
	Under Assumptions \ref{assumption:CME} and \ref{assump:weakCME}, $C_{X}^{\dagger} C_{XY}\colon \cG\to\cH$ is a bounded\textsuperscript{\ref{footnote:Carleman_trick}} operator and, for all $y\in\cY$ and $h\in\cH$,
	\begin{align}
		\label{equ:CMEmuNEWProjection}
		\innerprod{ h }{ \mu_{Y|X = \quark}(y) }_{ L^{2}(\bP_X)}
		&=
		\Innerprod{ h }{ \big(\mu_{Y} + (C_{X}^{\dagger} C_{XY})^{\ast} \, (\varphi(\quark) - \mu_{X})\big)(y) }_{ L^{2}(\bP_X)}.
	\end{align}
	Suppose in addition that any of the following four conditions holds:
	\begin{enumerate}[label=(\roman*)]
		\item \label{item:CMEproperly1} the kernel $k$ is characteristic;
		\item \label{item:CMEproperly2} $\cH_{\cC}$ is dense in $ L_{\cC}^{2}(\bP_X)$;
		\item \label{item:CMEproperly3} \Cref{assump:weakerCME} holds;
		\item \label{item:CMEproperly4} $f_{\psi(y)}\in \cH_{\cC}$ for each $y\in\cY$.
	\end{enumerate}
	Then, for $\bP_X$-a.e.\ $x\in\cX$,
	\begin{align}
		\label{equ:CMEmuNEW}
		\mu_{Y|X = x}
		&=
		\mu_{Y} + (C_{X}^{\dagger} C_{XY})^{\ast} \, (\varphi(x) - \mu_{X}).
	\end{align}
\end{theorem}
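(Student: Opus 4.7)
The plan is to first establish boundedness of $C_{X}^{\dagger} C_{XY}$ together with the ``weak'' identity \eqref{equ:CMEmuNEWProjection}, and then to upgrade this to the pointwise statement \eqref{equ:CMEmuNEW} under any one of the hypotheses (i)--(iv). Boundedness is immediate from the preceding development: \Cref{thm:RelationC_XandC_XY} combined with \Cref{assump:weakCME} gives $\ran C_{XY} \subseteq \ran C_{X}$, so $C_{X}^{\dagger} C_{XY}\colon\cG\to\cH$ is everywhere defined, and Douglas' theorem \Cref{theorem:DouglasExistenceQ} supplies the boundedness.

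For \eqref{equ:CMEmuNEWProjection}, I would compute both $L^{2}(\bP_{X})$-inner products explicitly, for fixed $y\in\cY$ and $h\in\cH$. Using the reproducing property, the tower law, and \eqref{equ:f_gExpectation}, the LHS becomes $\bE[h(X)]\mu_{Y}(y) + \Cov[h(X), f_{\psi(y)}(X)]$. Unfolding the adjoint and using $\bE[(C_{X}^{\dagger} C_{XY}\psi(y))(X)] = \innerprod{\mu_{X}}{C_{X}^{\dagger} C_{XY}\psi(y)}_{\cH}$, the RHS becomes $\bE[h(X)]\mu_{Y}(y) + \Cov[h(X), (C_{X}^{\dagger} C_{XY}\psi(y))(X)]$. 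Equality then follows from the covariance identity of \Cref{prop:SpecificChoiceh_g} applied to $g = \psi(y)$.

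For the pointwise statement, the plan is to reduce all four conditions to (iv): (i) $\implies$ (ii) by \Cref{lemma:EquivalenceCharacteristicDense}; (ii) combined with the ambient \Cref{assump:weakCME} yields (iii) by the implication \ref{assump:weakCME} $\implies$ \ref{assump:weakerCME} recorded in the remark following \Cref{assump:weakCMEuncentred}; and (iii) $\implies$ (iv) is trivial. Under (iv), \Cref{prop:SpecificChoiceh_g} with $g = \psi(y)$ produces, outside a $\bP_{X}$-null set $N_{y}$, the identity $f_{\psi(y)}(x) = c_{\psi(y)} + (C_{X}^{\dagger} C_{XY}\psi(y))(x)$ with $c_{\psi(y)} = \mu_{Y}(y) - \innerprod{\mu_{X}}{C_{X}^{\dagger} C_{XY}\psi(y)}_{\cH}$; via the reproducing property and the adjoint identity this rewrites as $\mu_{Y|X=x}(y) = \bigl[\mu_{Y} + (C_{X}^{\dagger} C_{XY})^{\ast}(\varphi(x) - \mu_{X})\bigr](y)$ for every $x \in \cX_{Y}\setminus N_{y}$.

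The main obstacle is promoting this pointwise-in-$y$, $\bP_{X}$-a.e.-in-$x$ identity to equality in $\cG$ holding $\bP_{X}$-a.e.\ in $x$ uniformly over $y$, since the exceptional set $N_{y}$ depends on $y$ and $\cY$ may be uncountable. I would resolve this by exploiting the separability of $\cG$: since $\cG = \overline{\spn\{\psi(y) \mid y\in\cY\}}$ is separable, collecting the countably many points $y$ appearing in any countable dense subset of $\spn\{\psi(y) \mid y\in\cY\}$ yields a countable $\{y_{n}\}_{n\in\bN} \subseteq \cY$ with $\overline{\spn\{\psi(y_{n}) \mid n\in\bN\}} = \cG$. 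Then $N \defeq \bigcup_{n\in\bN} N_{y_{n}}$ is still $\bP_{X}$-null, and for each $x \in \cX_{Y}\setminus N$ both sides of \eqref{equ:CMEmuNEW} lie in $\cG$ and have equal $\cG$-inner products with every $\psi(y_{n})$; density of $\spn\{\psi(y_{n}) \mid n\in\bN\}$ in $\cG$ then forces equality in $\cG$.
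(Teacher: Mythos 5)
Your proposal is correct and follows essentially the same route as the paper: boundedness via \Cref{thm:RelationC_XandC_XY} and \Cref{theorem:DouglasExistenceQ}, the weak identity \eqref{equ:CMEmuNEWProjection} via \Cref{prop:SpecificChoiceh_g}, \Cref{lemma:TeproducingPropertyAdjoint}, and \eqref{equ:f_gExpectation}, and the pointwise identity via the a.e.\ representation $f_{\psi(y)} = c_{\psi(y)} + C_{X}^{\dagger}C_{XY}\psi(y)$. Two points of divergence are worth noting. First, you dispatch conditions \ref{item:CMEproperly1} and \ref{item:CMEproperly2} by the chain (i)$\implies$(ii)$\implies$(iii)$\implies$(iv) (using that \ref{assump:weakCME} plus denseness of $\cH_{\cC}$ in $L_{\cC}^{2}$ yields \ref{assump:weakerCME}), so that all four cases funnel through (iv); the paper instead treats (i)/(ii) by arguing directly from the weak identity \eqref{equ:CMEmuNEWProjection} via denseness, which additionally requires observing that the difference of the two sides has zero $\bP_X$-mean so that orthogonality to $\cH_{\cC}$ upgrades to orthogonality to all of $L^{2}(\bP_X)$. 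Your unification is cleaner and loses nothing. Second, you explicitly handle the fact that the exceptional null set in the pointwise identity depends on $y$, by choosing a countable family $\{y_n\}$ with $\overline{\spn\{\psi(y_n)\}} = \cG$ (possible by separability of $\cG$) and testing against it; the paper's step $(\ast)$ and its ``follows directly'' leave this null-set bookkeeping implicit, so your argument is in fact slightly more complete on this point.
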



\begin{proof}
	\Cref{thm:RelationC_XandC_XY,theorem:DouglasExistenceQ} imply that $C_{X}^{\dagger} C_{XY}$ is well defined and bounded\textsuperscript{\ref{footnote:Carleman_trick}} and that, for each $g\in\cG$, we may choose the function $h_g\in\cH$ in Assumptions \ref{assump:weakerCME} and \ref{assump:weakCME} to be $h_g = C_{X}^{\dagger} C_{XY}g$ (by \Cref{prop:SpecificChoiceh_g}).
	Now \eqref{equ:f_gExpectation}, \Cref{lemma:TeproducingPropertyAdjoint}, and the definition of $c_g$ (see \Cref{assump:weakCME}) yield that, for $x\in\cX$ and $y\in\cY$,
	\begin{equation}
		\label{equ:RewritingH}
		h_{\psi(y)}(x) + c_{\psi(y)}
		=
		\big(\mu_{Y} + (C_{X}^{\dagger} C_{XY})^{\ast} (\varphi(x)-\mu_{X})\big)(y) .
	\end{equation}
	This yields \eqref{equ:CMEmuNEWProjection} for each $h\in\cH$ via
	\begin{align*}
		& \innerprod{ h }{ \big(\mu_{Y|X = \quark} - \mu_{Y} - (C_{X}^{\dagger} C_{XY})^{\ast} \, (\varphi(\quark) - \mu_{X})\big)(y) }_{L^{2}(\bP_X)} \\
		& \quad =
		\innerprod{ h }{ f_{\psi(y)} - h_{\psi(y)} - c_{\psi(y)} }_{ L^{2}(\bP_X)} \\
		& \quad =
		\underbrace{\Cov[h(X),(f_{\psi(y)} - h_{\psi(y)})(X)]}_{=0}
		+
		\bE[h(X)]\big(\underbrace{\bE[(f_{\psi(y)}- h_{\psi(y)})(X)] - c_\psi(y)}_{=0}\big)
		=0.
	\end{align*}
	If \ref{item:CMEproperly1} or \ref{item:CMEproperly2} holds (note that, by \Cref{lemma:EquivalenceCharacteristicDense}, \ref{item:CMEproperly1}$\implies$\ref{item:CMEproperly2}), then \eqref{equ:CMEmuNEW} follows directly.
	If \ref{item:CMEproperly3} or \ref{item:CMEproperly4} holds (with $f_{\psi(y)} = h_{\psi(y)} + c_{\psi(y)}$, $h_{\psi(y)}\in\cH$, $c_{\psi(y)}\in\bR$), then \eqref{equ:CMEmuNEW} can be obtained from
	\begin{align*}
		\mu_{Y|X = x}(y)
		& =
		\bE[\ell(y,Y)|X=x]
		=
		f_{\psi(y)}(x)
		\stackrel{(\ast)}{=}
		h_{\psi(y)}(x) + c_{\psi(y)} \\
		& =
		\big(\mu_{Y} + (C_{X}^{\dagger} C_{XY})^{\ast} \, (\varphi(x) - \mu_{X})\big)(y) ,
	\end{align*}
	where all equalities hold for $\bP_X$-a.e.\ $x\in\cX$ and the last equality follows from \eqref{equ:RewritingH} (note that we might be arguing with two different choices of $h_{\psi(y)}$, which me may assume to agree by \Cref{prop:SpecificChoiceh_g}).
\end{proof}


Note that step $(\ast)$ in the proof of \Cref{thm:CMEproperly} genuinely requires condition \ref{item:CMEproperly4} (which follows from \Cref{assump:weakerCME}), and \Cref{assump:weakCME} alone does not suffice.
Again we see that $\cH$ needs to be rich enough.
The reason that we get \eqref{equ:CMEmuNEWProjection} in terms of the inner product of $ L^{2}(\bP_X)$, and not its weaker version in $ L_{\cC}^{2}(\bP_X)$, is that we took care of the shifting constant $c_g \defeq \bE[f_g(X) - h_g(X)]$.

Motivated by the theory of Gaussian conditioning in Hilbert spaces \citep{owhadi2015conditioning} presented in \Cref{section:GaussianConditioning} and \Cref{thm:MainTheoremOwhadiScovel} in particular, we hope to generalise CMEs to the case where $\ran C_{XY}\subseteq \ran C_{X}$ (i.e., by \Cref{thm:RelationC_XandC_XY}, \Cref{assump:weakCME}) does not necessarily hold.
As mentioned above, this will require us to work with certain finite-rank approximations of the operators $C_{X}$ and $C_{XY}$.
We are still going to need some assumption that guarantees that $\cH$ is rich enough to be able to perform the conditioning process in the RKHSs.
For this purpose \Cref{assump:weakerCME} will be replaced by its weaker version \ref{assump:weakerCMElimit}.


\begin{theorem}[Centred CME under finite-rank approximation]
	\label{thm:CMEproperlyLimit}
	Let \Cref{assumption:CME} hold.
	Further, let $(h_n)_{n\in\bN}$ be a complete orthonormal system of $\cH$ that is an eigenbasis of $C_{X}$, let $\cH^{(n)} \defeq \spn \{ h_1,\dots,h_n \}$, let $\cF \defeq \cG\oplus\cH$, let $P^{(n)}\colon\cF\to\cF$ be the orthogonal projection onto $\cG\oplus \cH^{(n)}$, and let
	\[
		C
		\defeq
		\begin{pmatrix}
		C_{Y} & C_{YX}
		\\
		C_{XY} & C_{X}
		\end{pmatrix},
		\qquad
		C^{(n)}
		\defeq
		P^{(n)}CP^{(n)}
		=
		\begin{pmatrix}
		C_{Y} & C_{YX}^{(n)}
		\\
		C_{XY}^{(n)} & C_{X}^{(n)}
		\end{pmatrix}.
	\]
	Then $\ran C_{XY}^{(n)}\subseteq \ran C_{X}^{(n)}$ and therefore $h_g^{(n)}\defeq C_{X}^{(n)\dagger}C_{XY}^{(n)}g\in\cH$ is well defined for each $g\in\cG$.
	For each $y\in\cY$ and $h\in\cH$,
	\begin{equation}
		\label{equ:CMEmuNEWProjectionLimit}
		\innerprod{ h }{ \mu_{Y|X = \quark}(y) }_{ L^{2}(\bP_X)} = \lim_{n\to\infty} \innerprod{ h }{ \mu^{(n)}(\quark,y) }_{ L^{2}(\bP_X)},
	\end{equation}
	where, for $x\in\cX$ and $y\in\cY$,
	\[
		\mu^{(n)}(x,y) \defeq \big(\mu_{Y} + (C_{X}^{(n)\dagger} C_{XY}^{(n)})^{\ast} \, (\varphi(x) - \mu_{X})\big)(y).
	\]
	Suppose in addition that any of the following four conditions holds:
	\begin{enumerate}[label=(\roman*)]
		\item \label{enum:kCharacteristic} the kernel $k$ is characteristic;
		\item \label{enum:HCdense} $\cH_{\cC}$ is dense in $ L_{\cC}^{2}(\bP_X)$;
		\item \label{enum:Bstar} \Cref{assump:weakerCMElimit} holds;
		\item \label{enum:weakerBstar} $f_{\psi(y)}\in\overline{\cH_{\cC}}^{ L_{\cC}^{2}}$ for each $y\in\cY$.
	\end{enumerate}	
	Then, as $n\to\infty$,
	\begin{equation}
	\label{equ:CMElimit}
	\bignorm{ \mu^{(n)}(X,\quark) - \mu_{Y|X} }_{ L^{2}(\bP ; \cG)}	
	\to 0,
	\qquad	
	\bignorm{\mu_{Y|X = x} - \mu^{(n)}(x,\quark)}_{\cG}
	\to
	0
	\text{ for $\bP_X$-a.e.\ $x\in\cX$.}
	\end{equation}
\end{theorem}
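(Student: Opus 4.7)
First I would establish that $h_g^{(n)} := C_X^{(n)\dagger} C_{XY}^{(n)} g$ is well-defined by verifying $\ran C_{XY}^{(n)} \subseteq \ran C_X^{(n)}$. Since $(h_k)$ diagonalises $C_X$ with eigenvalues $\lambda_k = \innerprod{h_k}{C_X h_k}_\cH = \bV[h_k(X)]$, any index $k$ with $\lambda_k = 0$ has $h_k(X)$ constant $\bP$-a.s., hence $\innerprod{h_k}{C_{XY} g}_\cH = \Cov[h_k(X), g(Y)] = 0$ for every $g \in \cG$; therefore $C_{XY}^{(n)} g = P_{\cH^{(n)}} C_{XY} g$ lies in $\spn\{h_k : k \leq n,\ \lambda_k > 0\} = \ran C_X^{(n)}$. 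Mimicking \Cref{thm:RelationC_XandC_XY} for these finite-rank operators then identifies $h_g^{(n)}$ as the unique $h \in \cH^{(n)}$ satisfying $\Cov[h(X), \tilde h(X)] = \Cov[f_g(X), \tilde h(X)]$ for all $\tilde h \in \cH^{(n)}$; equivalently, its centred representative $\tilde h_g^{(n)} := h_g^{(n)} - \bE[h_g^{(n)}(X)]$ is the $L^2(\bP_X)$-orthogonal projection $\pi_n \tilde f_g$ of $\tilde f_g := f_g - \bE[f_g(X)]$ onto the finite-dimensional subspace $\tilde \cH^{(n)} := \{h - \bE[h(X)] : h \in \cH^{(n)}\}$.

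For \eqref{equ:CMEmuNEWProjectionLimit}, I would write $\mu^{(n)}(x, y) = \mu_Y(y) + \tilde h_{\psi(y)}^{(n)}(x)$ and, for $h \in \cH$ with $\tilde h := h - \bE[h(X)]$, use self-adjointness of $\pi_n$ to obtain
\[
\innerprod{h}{\mu^{(n)}(\quark, y)}_{L^2(\bP_X)} - \mu_Y(y)\,\bE[h(X)]
= \innerprod{\tilde h}{\pi_n \tilde f_{\psi(y)}}_{L^2}
= \innerprod{\pi_n \tilde h}{\tilde f_{\psi(y)}}_{L^2}.
\]
Since $(h_k)$ is an orthonormal basis of $\cH$ and $\cH \hookrightarrow L^2(\bP_X)$ is continuous, $\cup_n \tilde \cH^{(n)}$ is $L^2$-dense in $\{h - \bE[h(X)]:h\in\cH\}$, so strong convergence of projections onto increasing closed subspaces gives $\pi_n \tilde h \to \tilde h$ in $L^2$; the limit of the inner products equals $\innerprod{\tilde h}{\tilde f_{\psi(y)}}_{L^2} = \Cov[h(X), f_{\psi(y)}(X)]$, which rearranges to the claim. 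For the $L^2(\bP;\cG)$ half of \eqref{equ:CMElimit} under any of (i)--(iv), I would first check that each condition forces $[f_g] \in \overline{\cH_\cC}^{L_\cC^2}$ for all $g \in \cG$: (i) and (ii) give $\overline{\cH_\cC}^{L_\cC^2} = L_\cC^2$ via \Cref{lemma:EquivalenceCharacteristicDense}; (iii) implies (iv); and (iv) extends from $g = \psi(y)$ to all $g$ via continuity of $g \mapsto [f_g] \colon \cG \to L_\cC^2$ and density of $\spn\{\psi(y) : y \in \cY\}$ in $\cG$. Then, defining $T\colon \cG \to L^2(\bP_X)$ by $T g := \tilde f_g$ and $T_n g := \pi_n T g = \tilde h_g^{(n)}$, the Carleman-type argument of footnote~\ref{footnote:Carleman_trick} shows $T$ is Hilbert--Schmidt; combined with $\ran T$ lying inside the $L^2$-closure of the centred $\cH$ and strong convergence $\pi_n \to I$ there, Lebesgue's dominated convergence theorem applied to $\|T_n - T\|_{\mathrm{HS}}^2 = \sum_m \|(I - \pi_n) T g_m\|_{L^2}^2$ (majorant $\sum_m \|T g_m\|_{L^2}^2 = \|T\|_{\mathrm{HS}}^2 < \infty$) yields $\|T_n - T\|_{\mathrm{HS}} \to 0$. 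The isometry $L^2(\bP_X;\cG) \cong \mathrm{HS}(\cG, L^2(\bP_X))$ translates this into the stated $L^2(\bP;\cG)$-convergence.

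The main obstacle is the $\bP_X$-a.e.\ convergence of the full sequence. The $L^2(\bP;\cG)$-convergence only delivers $\bP_X$-a.e.\ convergence along a subsequence, and extending to the whole sequence is subtle because the orthogonal projection in $L^2(\bP_X;\cG)$ onto $\tilde \cH^{(n)} \otimes \cG$ is in general not a conditional expectation---the $L^2$-orthonormal system $\tilde u_k(X) := \tilde h_k(X)/\sqrt{\lambda_k}$ is typically not independent---so L\'evy's martingale convergence theorem does not apply directly. My plan is to exploit the explicit expansion $\mu^{(n)}(x,\quark) - \mu_Y = \sum_{k\leq n} \tilde u_k(x)\, g_k$ with $g_k := (C_{YX} h_k)/\sqrt{\lambda_k}$ and $\sum_k \|g_k\|_\cG^2 = \|T\|_{\mathrm{HS}}^2 < \infty$, then invoke either a Rademacher--Menshov-style maximal inequality for Hilbert-valued orthogonal series or a Doob-type inequality exploiting the nested projection structure. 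The contrast with the uncentred analogue \Cref{thm:CMEproperlyUncenteredLimit}, for which only $L^2$-convergence is asserted, suggests that the a.s.\ statement genuinely hinges on a subtle monotonicity afforded by centring that is unavailable in the uncentred setting.
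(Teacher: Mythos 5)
Your arguments for the range inclusion, for \eqref{equ:CMEmuNEWProjectionLimit}, and for the $L^{2}(\bP;\cG)$ half of \eqref{equ:CMElimit} are sound, and in places they take a different route from the paper's. For $\ran C_{XY}^{(n)}\subseteq\ran C_{X}^{(n)}$ you argue directly via the kernel of $C_{X}$, whereas the paper invokes the factorisation $C_{XY}^{(n)}=(C_{X}^{(n)})^{1/2}VC_{Y}^{1/2}$ of \citet[Theorem~1]{baker1973joint}; your version is more elementary. For \eqref{equ:CMEmuNEWProjectionLimit} you move the projection $\pi_n$ onto $h$ by self-adjointness, while the paper proves $\Cov[h(X),f_g(X)]=\lim_n\Cov[h(X),h_g^{(n)}(X)]$ via strong convergence $C^{(n)}\to C$ (\Cref{lemma:TechnicalDetailsMainProof}\ref{item:TechnicalDetailsMainProofA}); both work. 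For the $L^{2}(\bP;\cG)$ convergence, your identification of $g\mapsto \tilde f_g$ as a Carleman (hence Hilbert--Schmidt) operator and the isometry $L^{2}(\bP_X;\cG)\simeq \mathrm{HS}(\cG,L^{2}(\bP_X))$ is a clean substitute for the paper's lifting lemma (\Cref{lemma:LiftingPropertiesFromL2RtoL2G}\ref{item:LiftingPropertyA}--\ref{item:LiftingPropertyB}); the content is essentially the same, and your reduction of (i)--(iv) to $[f_g]\in\overline{\cH_{\cC}}^{L_{\cC}^{2}}$ for all $g$ matches the paper's chain of implications.

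The genuine gap is the $\bP_X$-a.e.\ convergence, which you acknowledge but do not close, and the tools you propose would not close it: a Rademacher--Menshov maximal inequality for the orthogonal series $\sum_k \tilde u_k(x)\,g_k$ requires $\sum_k \norm{g_k}_{\cG}^{2}(\log k)^{2}<\infty$, while here only $\sum_k\norm{g_k}_{\cG}^{2}<\infty$ is available, so that route yields nothing beyond subsequential a.e.\ convergence. The paper instead proves (\Cref{lemma:MartingaleConsequences}) that $(\mu^{(n)}(X,\quark))_{n\in\bN}$ \emph{is} a $\cG$-valued martingale with respect to the filtration $(\sigma(V^{(n)}))_n$, $V^{(n)}\defeq P_{\cH^{(n)}}\varphi(X)$, and then applies the vector-valued martingale convergence theorem \citep[Theorem~V.2.8]{diestel1977} to upgrade $L^{1}(\bP;\cG)$-convergence to a.e.\ convergence. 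The key computation uses the Karhunen--Lo\`eve expansion $\varphi(X)=\mu_X+\sum_i Z_i h_i$ together with the facts that $A^{(n+1)}=(C_{X}^{(n+1)\dagger}C_{XY}^{(n+1)})^{\ast}$ vanishes on $(\cH^{(n+1)})^{\perp}$ and agrees with $A^{(n)}$ on $\cH^{(n)}$, so that $\bE[\mu^{(n+1)}(X,\quark)\mid Z_1,\dots,Z_n]=\mu^{(n)}(X,\quark)$. Note that this hinges on $\bE[Z_i\mid Z_1,\dots,Z_n]=0$ for $i>n$ --- exactly the point underlying your (correct) observation that the $L^{2}$-orthogonal projections need not be conditional expectations --- so this martingale lemma is the one ingredient your proposal is missing and the step you would need to supply or scrutinise; it is also precisely what breaks in the uncentred setting of \Cref{thm:CMEproperlyUncenteredLimit}, consistent with your closing remark.
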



\begin{proof}
	Note that, since $C$ is a trace-class operator, so is $C^{(n)}$.
	Furthermore, by \citet[Theorem~1]{baker1973joint}, $C_{XY}^{(n)} = (C_{X}^{(n)})^{1/2} V C_{Y}^{1/2}$ for some bounded operator $V\colon \cG\to\cH$.
	Since $C_{X}^{(n)}$ has finite rank, this implies that $\ran C_{XY}^{(n)}\subseteq \ran C_{X}^{(n)}$.
	Similarly to the proof of \Cref{thm:CMEproperly}, we define $c_g^{(n)} \defeq \bE[(f_g - h_g^{(n)})(X)]$ for $g\in\cG,n\in\bN$ and obtain by \eqref{equ:f_gExpectation} and \Cref{lemma:TeproducingPropertyAdjoint} for $x\in\cX$, $y\in\cY$ and $n\in\bN$ that
	\begin{equation}
		\label{equ:RewritingHlimit}
		h_{\psi(y)}^{(n)}(x) + c_{\psi(y)}^{(n)}
		=
		\mu^{(n)}(x,y).
	\end{equation}
	Identity \eqref{equ:CMEmuNEWProjectionLimit} can be obtained similarly to \eqref{equ:CMEmuNEWProjection} except that we also need to show that $\Cov[h(X),f_g(X)] = \lim_{n\to\infty} \Cov[h(X),h_g^{(n)}(X)]$ for all $h\in\cH$, as proved in \Cref{lemma:TechnicalDetailsMainProof}\ref{item:TechnicalDetailsMainProofA}.

	To establish \eqref{equ:CMElimit}, we first note that, by \Cref{lemma:TechnicalDetailsMainProof}\ref{item:TechnicalDetailsMainProofB}, for all $g \in \cG$, $[h_g^{(n)}]$ is the $ L_{\cC}^2$-orthogonal projection of $[f_g]$ onto $\cH_\cC^{(n)}$.
	Now let $y\in\cY$ and $U \defeq \bigcup_{n\in\bN} \cH_\cC^{(n)}$.
	Note that, by \Cref{lemma:EquivalenceCharacteristicDense}, \ref{enum:kCharacteristic}$\implies$\ref{enum:HCdense}$\implies$\ref{enum:Bstar}$\implies$\ref{enum:weakerBstar}, so let us assume \ref{enum:weakerBstar}.
	Since $\overline{U}^{\cH_\cC} = \cH_\cC$ and $[f_{\psi(y)}]\in\overline{\cH_\cC}^{ L_{\cC}^{2}}$ by assumption, and since \eqref{equ:HnormStrongerThanL2norm} implies that $\norm{ \quark }_\cH$ is a stronger norm than $\norm{ \quark }_{ L^{2}}$, we also have $[f_{\psi(y)}]\in\overline{U}^{ L_{\cC}^{2}}$ and \Cref{lemma:ConvergenceOfProjections} implies
	\begin{equation}
	\label{equ:ConvergenceL2_C}
	\bignorm{ [h_{\psi(y)}^{(n)}] - [f_{\psi(y)}] }_{ L_{\cC}^{2}}
	\xrightarrow[n\to\infty]{}
	0.
	\end{equation}	
	For $x\in\cX$ and $n\in\bN$ let $\fm^{(n)}(x) \defeq h_{\psi(\quark)}^{(n)}(x) = (C_{X}^{(n)\dagger} C_{XY}^{(n)})^{\ast}\varphi(x) \in\cG$ and $\fm(x) \defeq f_{\psi(\quark)}(x) = \mu_{Y|X=x} \in \cG$.
	Then $\fm^{(n)},\fm\in L^{2}(\bP_{X};\cG)$ by \eqref{equ:FiniteSecondMomentOfUandV}, since
	\begin{align*}
		\norm{\fm^{(n)}}_{ L^{2}(\bP_{X};\cG)}^{2}
		&=
		\bE\bigl[\bignorm{(C_{X}^{(n)\dagger} C_{XY}^{(n)})^{\ast}\varphi(X)}_{\cG}^{2}\bigr] 		
		\leq
		\bignorm{(C_{X}^{(n)\dagger} C_{XY}^{(n)})^{\ast}}\, \bE\bigl[\norm{\varphi(X)}_{\cH}^{2}\bigr] 	
		<
		\infty,
		\\
		\norm{\fm}_{ L^{2}(\bP_{X};\cG)}^{2}
		&=
		\bE\big[\norm{\bE[\psi(Y)|X]}_{\cG}^{2}\big] 		
		\leq
		\bE\big[\bE[\norm{\psi(Y)}_{\cG}^{2}|X]\big] 	
		=
		\bE\big[\norm{\psi(Y)}_{\cG}^{2}\big]
		<
		\infty.
	\end{align*}	
	So far, we have shown that, for each $y\in\cY$,
	\begin{itemize}
		\item		
		$([\fm^{(n)}](\quark))(y)$ is the $ L_{\cC}^{2}(\bP_{X})$-orthogonal projection of $([\fm](\quark))(y)$ onto $\cH_{\cC}^{(n)}$;
		\item		
		$([\fm^{(n)}](\quark))(y) \to ([\fm](\quark))(y)$ in $ L_{\cC}^{2}(\bP_{X})$ as $n \to \infty$.
	\end{itemize}
	Hence, by \Cref{lemma:LiftingPropertiesFromL2RtoL2G}\ref{item:LiftingPropertyA} and \ref{item:LiftingPropertyB},
	\begin{equation}
	\label{equ:ConvergenceL2_P_X_G}
	\bignorm{ (\fm^{(n)}(X) - \bE[\fm^{(n)}(X)]) - (\fm(X) - \bE[\fm(X)]) }_{ L^{2}(\bP ; \cG)}
	\xrightarrow[n\to\infty]{} 0.
	\end{equation}
	Therefore, by \eqref{equ:RewritingHlimit} and the definition of $c_g^{(n)}$, $\mu^{(n)}(X,\quark)$ converges to $\mu_{Y|X} = f_{\psi(\quark)}(X) = \fm(X)$ in $ L^{p}(\bP ; \cG)$ for $p=2$ and, since $\bP$ is a finite measure, also for $p=1$.
	By \Cref{lemma:MartingaleConsequences}, $(\mu^{(n)}(X,\cdot))_{n\in\bN}$ is a martingale, and so \citet[Theorem V.2.8]{diestel1977} implies that this convergence even holds a.e., i.e.\ $\mu^{(n)}(x,\quark)$ converges in $\cG$ to $\mu_{Y|X=x}$ for $\bP_{X}$-a.e.\ $x\in\cX$.
\end{proof}

\section{Theory for Uncentred Operators}
\label{section:UncentredOperators}

Beginning with the work of \cite{song2010a,song2010b}, uncentred \mbox{(cross-)}\-covariance operators became more commonly used than centred ones. This section shows how similar results to those of \Cref{section:TheoryCentred} can be obtained for uncentred operators.
Roughly speaking, the same conclusions can be made as in \Cref{thm:CMEproperly} but under \Cref{assump:strongCME} in place of \ref{assump:weakerCME}, while only weaker statements than in \Cref{thm:CMEproperlyLimit} can be obtained in \Cref{thm:CMEproperlyUncenteredLimit} (no $\bP_{X}$-a.e.\ convergence, see below) and again under the stronger \Cref{assump:strongCMElimit} in place of \ref{assump:weakerCMElimit}.
This observation suggests that centred operators are superior to uncentred ones in terms of generality.
So far, the theoretical justification for CME using uncentred operators relies on \citet[Theorems 1 and 2]{fukumizu2013kernel}, which require rather strong assumptions.
Our improvement can be summarised as follows:
\begin{itemize}
	\item Since we use $\uu{C}_{X}^{\dagger}$ instead of $\uu{C}_{X}^{-1}$ our theory can cope with non-injective operators $\uu{C}_{X}$.
	This is only a minor advance, since $\uu{C}_{X}$ is injective under rather mild conditions on $X$ and $k$ (see \citet[Footnote~3]{fukumizu2013kernel}).
	\item In contrast to \citet[Theorem~2]{fukumizu2013kernel}, we do not require the assumption that $\varphi(x)$ lies in the range of $\uu{C}_{X}$.
	The reason for this is that the operator $(\uu{C}_{X}^{\dagger} \uu{C}_{XY})^{\ast}$ in \eqref{equ:CMEmuNEW} is globally defined whereas $\uu{C}_{YX}\uu{C}_{X}^{-1}$ is not.
	This is an important improvement since the assumption that $\varphi(x) \in \ran \uu{C}_{X}$ is typically hard to verify (see \Cref{footnote:Picard}).
	\item We state a version of the CME formula under \Cref{assump:strongCMElimit}, which is a verifiable condition since it follows from the kernel $k$ being $L^2$-universal.
	\item As explained in \Cref{remark:AllResultsOnlyP_Xae}, the condition in \citet[Theorem~2]{fukumizu2013kernel} on $\bE[g(Y)|X=\quark]$ to lie in $\cH$ for each $g\in\cG$ is ill posed, since these functions are uniquely defined only $\bP_X$-a.e.
	However, in our case, \Cref{assumption:CME}\ref{notation:NoNontrivialZerosInH} ensures that Assumptions \ref{assump:strongCME} and \ref{assump:strongCMElimit} are unambiguous.
\end{itemize}
As mentioned above, using centred operators instead of uncentred ones yields the important advantage of requiring only the weaker \Cref{assump:weakerCME} in place of \ref{assump:strongCME} or \Cref{assump:weakerCMElimit} in place of \ref{assump:strongCMElimit}, respectively.
Further, \Cref{thm:CMEproperlyUncenteredLimit} provides weaker statements than its centred analogue, \Cref{thm:CMEproperlyLimit}:
we show only convergence in $L^2(\bP_{X};\cG)$, which does not guarantee convergence for $\bP_{X}$-a.e.\ $x\in\cX$.

\begin{theorem}
	\label{thm:RelationC_XandC_XYuncentred}
	Under \Cref{assumption:CME}, the following statements are equivalent:
	\begin{enumerate}[label=(\roman*)]
		\item \label{statement1lemma:RelationC_XandC_XYuncentred} \Cref{assump:weakCMEuncentred} holds.
		\item \label{statement2lemma:RelationC_XandC_XYuncentred} For each $g\in\cG$ there exists $h_g\in\cH$ such that
		$
		\uu{C}_{X} h_g
		=
		\uu{C}_{XY}g.
		$
		\item \label{statement3lemma:RelationC_XandC_XY_uncentred} $\ran \uu{C}_{XY}\subseteq \ran \uu{C}_{X}$.
	\end{enumerate}
\end{theorem}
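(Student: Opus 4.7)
The plan is to mirror the proof of \Cref{thm:RelationC_XandC_XY} verbatim, but with centred covariances replaced by $L^{2}(\bP_{X})$ inner products throughout. First, I observe that \ref{statement2lemma:RelationC_XandC_XYuncentred} is simply \ref{statement3lemma:RelationC_XandC_XY_uncentred} rephrased: saying that for every $g \in \cG$ there exists $h_{g} \in \cH$ with $\uu{C}_{X} h_{g} = \uu{C}_{XY} g$ is literally the statement that each $\uu{C}_{XY} g$ lies in $\ran \uu{C}_{X}$. So the task reduces to proving \ref{statement1lemma:RelationC_XandC_XYuncentred} $\iff$ \ref{statement2lemma:RelationC_XandC_XYuncentred}.

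The workhorse is the uncentred analogue of \Cref{lemma:RelationCovAndC_XY}: for all $h \in \cH$ and $g \in \cG$,
\[
    \innerprod{h}{\uu{C}_{XY} g}_{\cH}
    = \bE[h(X) g(Y)]
    = \innerprod{h}{f_{g}}_{L^{2}(\bP_{X})},
    \qquad
    \innerprod{h_{1}}{\uu{C}_{X} h_{2}}_{\cH}
    = \innerprod{h_{1}}{h_{2}}_{L^{2}(\bP_{X})}.
\]
Both identities follow by expanding the uncentred (cross-)covariance as the Bochner integral of an outer product, applying the reproducing property $\innerprod{h}{\varphi(X)}_{\cH} = h(X)$ inside the expectation, and (for the first) using the tower property $\bE[g(Y) \mid X] = f_{g}(X)$. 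The integrability needed to move the inner product inside the expectation is ensured by \eqref{equ:FiniteSecondMomentOfUandV} and \eqref{equ:HnormStrongerThanL2norm} in \Cref{assumption:CME}.

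With these in hand, for any $h_{g} \in \cH$ one has the chain of equivalences
\begin{align*}
    \innerprod{h_{g} - f_{g}}{h}_{L^{2}(\bP_{X})} = 0\ \forall h \in \cH
    &\iff \innerprod{h}{\uu{C}_{X} h_{g}}_{\cH} = \innerprod{h}{\uu{C}_{XY} g}_{\cH}\ \forall h \in \cH \\
    &\iff \uu{C}_{X} h_{g} = \uu{C}_{XY} g,
\end{align*}
where the final $\Rightarrow$ uses that $\uu{C}_{X} h_{g} - \uu{C}_{XY} g \in \cH$, so testing against $h \defeq \uu{C}_{X} h_{g} - \uu{C}_{XY} g$ forces it to vanish. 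Quantifying over $g \in \cG$ gives \ref{statement1lemma:RelationC_XandC_XYuncentred} $\iff$ \ref{statement2lemma:RelationC_XandC_XYuncentred}.

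I do not anticipate any genuine obstacle: the only subtlety is being careful that the two different realisations of $\uu{\Cov}[h(X), h'(X)]$ (one as an RKHS pairing with $\uu{C}_{X}$, the other as an $L^{2}(\bP_{X})$ inner product) really coincide on $\cH$, which is exactly the content of the auxiliary identity above. Once that is verified, the argument is the same one-line manipulation as in the centred case.
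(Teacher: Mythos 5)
Your proposal is correct and follows essentially the same route as the paper, which simply observes that the proof of \Cref{thm:RelationC_XandC_XY} carries over verbatim with uncentred operators; your auxiliary identity is exactly the second statement of \Cref{lemma:RelationCovAndC_XY} together with the remark in \Cref{assumption:CME} that $\uu\Cov[h_1(X),h_2(X)] = \innerprod{h_1}{h_2}_{L^2(\bP_X)}$. The extra detail you supply (non-degeneracy of $\innerprod{\quark}{\quark}_{\cH}$ to pass from the tested identity to $\uu{C}_{X} h_g = \uu{C}_{XY} g$) is a correct and harmless elaboration of the same argument.
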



\begin{proof}
	The proof is identical to that of \Cref{thm:RelationC_XandC_XY} (apart from using uncentred covariance operators in place of centred ones).
\end{proof}


Similar to \Cref{prop:SpecificChoiceh_g}, the element $h_g\in\cH$ in \Cref{assump:weakCMEuncentred} can always be chosen as $h_g = \uu{C}_{X}^{\dagger} \uu{C}_{XY}g$.


\begin{proposition}
	\label{prop:SpecificChoiceh_gUncentred}
	Let \Cref{assumption:CME} hold.
	Under \Cref{assump:weakCMEuncentred}, $h_g$ may be chosen as
	\begin{equation}
		\label{equ:SpecificChoiceh_gUncentred}
		h_g = \uu{C}_{X}^{\dagger} \uu{C}_{XY}g.
	\end{equation}
	More precisely, $\uu\Cov[(\uu{C}_{X}^{\dagger} \uu{C}_{XY}g)(X)-f_g(X),h(X)] = 0$ for all $h\in\cH$ and $g\in\cG$.
	If $f_g\in\cH$ for some $g\in\cG$, then the identity $f_g = \uu{C}_{X}^{\dagger} \uu{C}_{XY}g$ holds $\bP_X$-a.e.	
\end{proposition}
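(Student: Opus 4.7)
The plan is to mirror the proof of \Cref{prop:SpecificChoiceh_g} line by line, using the uncentred counterparts of the relevant operator identities. Under \Cref{assump:weakCMEuncentred}, \Cref{thm:RelationC_XandC_XYuncentred} gives that $\ran \uu{C}_{XY} \subseteq \ran \uu{C}_{X}$, so $\uu{C}_{X}^{\dagger} \uu{C}_{XY} g \in \cH$ is well defined for every $g \in \cG$, and moreover the vector $\uu{C}_{XY} g$ lies in the domain on which $\uu{C}_{X} \uu{C}_{X}^{\dagger}$ acts as the orthogonal projection onto $\overline{\ran \uu{C}_{X}}$, yielding the identity $\uu{C}_{X} \uu{C}_{X}^{\dagger} \uu{C}_{XY} g = \uu{C}_{XY} g$.

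Next, I would verify the uncentred analogue of \Cref{lemma:RelationCovAndC_XY}, namely $\uu\Cov[h(X), f_{g}(X)] = \innerprod{h}{\uu{C}_{XY} g}_{\cH}$ for all $h \in \cH$, $g \in \cG$. This follows from the tower property: $\uu\Cov[h(X), f_{g}(X)] = \bE[h(X)\,\bE[g(Y)\mid X]] = \bE[h(X) g(Y)] = \innerprod{h}{\uu{C}_{XY} g}_{\cH}$, where the last equality is the reproducing property of $\uu{C}_{XY}$ recorded in \Cref{assumption:CME}\ref{notation:CMEmuC}. Combining this with $\uu\Cov[h(X), v(X)] = \innerprod{h}{v}_{L^2(\bP_X)}$ and the previous paragraph gives, for every $h \in \cH$ and $g \in \cG$,
\[
	\uu\Cov\bigl[h(X),\,(\uu{C}_{X}^{\dagger} \uu{C}_{XY} g)(X)\bigr]
	= \innerprod{h}{\uu{C}_{X} \uu{C}_{X}^{\dagger} \uu{C}_{XY} g}_{\cH}
	= \innerprod{h}{\uu{C}_{XY} g}_{\cH}
	= \uu\Cov[h(X), f_{g}(X)],
\]
which rearranges to the claimed orthogonality.

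For the second statement, suppose $f_{g} \in \cH$ for some particular $g \in \cG$. Then $h \defeq \uu{C}_{X}^{\dagger} \uu{C}_{XY} g - f_{g} \in \cH$ is an admissible test element in the identity just proved, and specialising to this $h$ gives $\norm{\uu{C}_{X}^{\dagger} \uu{C}_{XY} g - f_{g}}_{L^{2}(\bP_{X})}^{2} = 0$. Hence $f_{g} = \uu{C}_{X}^{\dagger} \uu{C}_{XY} g$ holds $\bP_{X}$-a.e., and by \Cref{assumption:CME}\ref{notation:NoNontrivialZerosInH} this $\bP_{X}$-a.e.\ equality between elements of $\cH$ upgrades to equality in $\cH$.

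No step looks genuinely hard; the only point that warrants care is the justification of $\uu{C}_{X} \uu{C}_{X}^{\dagger} \uu{C}_{XY} g = \uu{C}_{XY} g$, which requires knowing that $\uu{C}_{XY} g \in \ran \uu{C}_{X}$ (not merely in $\dom \uu{C}_{X}^{\dagger}$), and this is precisely what \Cref{thm:RelationC_XandC_XYuncentred}\ref{statement3lemma:RelationC_XandC_XY_uncentred} supplies under \Cref{assump:weakCMEuncentred}. All other steps are direct transcriptions of the centred proof with $\Cov$ replaced by $\uu\Cov$ and $C_{X}, C_{XY}$ by $\uu{C}_{X}, \uu{C}_{XY}$.
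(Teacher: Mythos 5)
Your proof is correct and follows essentially the same route as the paper: well-definedness and the identity $\uu{C}_{X}\uu{C}_{X}^{\dagger}\uu{C}_{XY}g = \uu{C}_{XY}g$ via \Cref{thm:RelationC_XandC_XYuncentred}, then \Cref{lemma:RelationCovAndC_XY} (whose uncentred half you re-derive, harmlessly) to obtain the orthogonality statement. The only cosmetic difference is the last step: the paper deduces $\uu{C}_{X}(f_g - \uu{C}_{X}^{\dagger}\uu{C}_{XY}g)=0$ and invokes \Cref{lemma:kernelC_Xconstants}, whereas you test the orthogonality relation against $h = \uu{C}_{X}^{\dagger}\uu{C}_{XY}g - f_g$ itself to get a vanishing $L^2(\bP_X)$ norm --- both reduce to $\innerprod{h}{\uu{C}_{X}h}_{\cH} = \norm{h}_{L^2(\bP_X)}^2$, so the arguments are equivalent.
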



\begin{proof}
	By \Cref{thm:RelationC_XandC_XYuncentred}, \eqref{equ:SpecificChoiceh_gUncentred} is well defined.
	If \Cref{assump:weakCMEuncentred} holds, then, by \Cref{thm:RelationC_XandC_XYuncentred} and \Cref{lemma:RelationCovAndC_XY}, for all $g\in\cG$ and $h\in\cH$,
	\begin{align*}
		\uu{\Cov} [h(X), (\uu{C}_{X}^{\dagger} \uu{C}_{XY}g)(X)]
		& =
		\innerprod{ h }{ \uu{C}_{X} \uu{C}_{X}^{\dagger} \uu{C}_{XY} g }_{\cH} \\
		& =
		\innerprod{ h }{ \uu{C}_{XY} g }_{\cH}
		=
		\uu{\Cov} [h(X), f_g(X)].
	\end{align*}
	If $f_g\in\cH$ holds for some $g\in\cG$, then \Cref{lemma:RelationCovAndC_XY} implies that $\uu{C}_{X}f_g = \uu{C}_{XY} g$ for all $g\in\cG$ and the claim follows from \Cref{lemma:kernelC_Xconstants}.
\end{proof}


Let us now formulate and prove the analogues of \Cref{thm:CMEproperly,thm:CMEproperlyLimit} for uncentred operators.

\begin{theorem}[Uncentred CME]
	\label{thm:CMEproperlyUncentered}
	Under Assumptions \ref{assumption:CME} and \ref{assump:weakCMEuncentred}, the linear operator\linebreak[4] $\uu{C}_{X}^{\dagger} \uu{C}_{XY}\colon \cG\to\cH$ is bounded\textsuperscript{\ref{footnote:Carleman_trick}} and, for all $y\in\cY$ and $h\in\cH$,
	\begin{align}
		\label{equ:CMEmuNEWProjectionAlternativeUncentred}
		\innerprod{ h }{ \mu_{Y|X = \quark}(y) }_{ L^2(\bP_X)}
		&=
		\Innerprod{ h }{ \big((\uu{C}_{X}^{\dagger} \uu{C}_{XY})^{\ast} \varphi(\quark)\big)(y) }_{ L^2(\bP_X)}.
		\intertext{
		Suppose in addition that any of the following four conditions holds:
		\begin{enumerate}[label=(\roman*)]
			\item the kernel $k$ is $L^2$-universal;
			\item $\cH$ is dense in $ L^2(\bP_X)$;
			\item \Cref{assump:strongCME} holds;
			\item $f_{\psi(y)}\in\cH$ for each $y\in\cY$.
		\end{enumerate}
		Then, for $\bP_X$-a.e.\ $x\in\cX$,
		}
		\label{equ:CMEmuNEWAlternativeUncentred}
		\mu_{Y|X = x}
		&=
		(\uu{C}_{X}^{\dagger} \uu{C}_{XY})^{\ast} \, \varphi(x).
	\end{align}
\end{theorem}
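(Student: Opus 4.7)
The plan is to mirror the proof of \Cref{thm:CMEproperly} in the uncentred setting, observing that the absence of mean subtraction removes the need to track the additive constant $c_g$ but correspondingly removes one degree of freedom from the assumptions.

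First, I would establish that $\uu{C}_{X}^{\dagger}\uu{C}_{XY}\colon\cG\to\cH$ is well defined and bounded. By \Cref{thm:RelationC_XandC_XYuncentred}, \Cref{assump:weakCMEuncentred} is equivalent to $\ran\uu{C}_{XY}\subseteq\ran\uu{C}_{X}$, and then \Cref{theorem:DouglasExistenceQ} supplies boundedness (with the Hilbert--Schmidt strengthening coming from \Cref{lemma:Carleman_trick}), exactly as in the centred case.

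Next, for the weak identity \eqref{equ:CMEmuNEWProjectionAlternativeUncentred}, I would use \Cref{prop:SpecificChoiceh_gUncentred} to make the canonical choice $h_g \defeq \uu{C}_{X}^{\dagger}\uu{C}_{XY}g$, which by that proposition satisfies $\innerprod{h_g - f_g}{h}_{L^2(\bP_X)}=0$ for every $h\in\cH$ and $g\in\cG$. Specialising to $g=\psi(y)$ and invoking \Cref{lemma:TeproducingPropertyAdjoint} gives the pointwise identity
\[
h_{\psi(y)}(x) \;=\; \innerprod{\varphi(x)}{\uu{C}_{X}^{\dagger}\uu{C}_{XY}\psi(y)}_{\cH} \;=\; \bigl((\uu{C}_{X}^{\dagger}\uu{C}_{XY})^{\ast}\varphi(x)\bigr)(y).
\]
Since $\mu_{Y|X=x}(y) = f_{\psi(y)}(x)$ by definition, substitution into the orthogonality relation yields \eqref{equ:CMEmuNEWProjectionAlternativeUncentred} directly.

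For the $\bP_X$-a.e.\ identity \eqref{equ:CMEmuNEWAlternativeUncentred} under any of (i)--(iv), I would note the implications (i)$\Rightarrow$(ii)$\Rightarrow$(iv) and (iii)$\Rightarrow$(iv): $L^2$-universality is by definition denseness of $\cH$ in $L^2(\bQ)$ for every probability measure $\bQ$, and \Cref{assump:strongCME} is just $f_{\psi(y)}\in\cH$ for all $y$. Under (iv), the final statement of \Cref{prop:SpecificChoiceh_gUncentred} gives $f_{\psi(y)} = \uu{C}_{X}^{\dagger}\uu{C}_{XY}\psi(y) = h_{\psi(y)}$ $\bP_X$-a.e.; combining with the pointwise formula for $h_{\psi(y)}(x)$ above yields \eqref{equ:CMEmuNEWAlternativeUncentred}. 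Under (ii) alone, the $L^2(\bP_X)$-orthogonality of $f_{\psi(y)} - h_{\psi(y)}$ to the dense subspace $\cH$ forces the same $\bP_X$-a.e.\ equality, and the same conclusion follows. \Cref{assumption:CME}\ref{notation:NoNontrivialZerosInH} ensures that these $\bP_X$-a.e.\ identifications pin down a unique element of $\cH$, so the formula is unambiguous.

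The main conceptual obstacle is keeping track of why the shift-by-constant freedom has disappeared: in the centred analogue one could absorb discrepancies into $c_g\in\bR$, whereas here the orthogonality produced by \Cref{prop:SpecificChoiceh_gUncentred} lives in the full space $L^2(\bP_X)$, so one really needs $f_g\in\cH$ (not merely modulo constants). This is precisely why \Cref{assump:strongCME} takes the place of \Cref{assump:weakerCME}, and why condition (iv) demands $f_{\psi(y)}\in\cH$ rather than $f_{\psi(y)}\in\cH_{\cC}$. Once this is acknowledged, the remaining steps are routine.
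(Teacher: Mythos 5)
Your proposal is correct and follows essentially the same route as the paper's proof: well-definedness and boundedness via \Cref{thm:RelationC_XandC_XYuncentred} and \Cref{theorem:DouglasExistenceQ}, the canonical choice $h_g=\uu{C}_{X}^{\dagger}\uu{C}_{XY}g$ from \Cref{prop:SpecificChoiceh_gUncentred}, the pointwise identity from \Cref{lemma:TeproducingPropertyAdjoint}, and then the orthogonality relation for \eqref{equ:CMEmuNEWProjectionAlternativeUncentred} and the implications among (i)--(iv) for \eqref{equ:CMEmuNEWAlternativeUncentred}. Your explicit remark that (ii)$\Rightarrow$(iv) uses the standing \Cref{assump:weakCMEuncentred} (so that $P_{\overline{\cH}^{L^2}}f_g=f_g\in\cH$), and your closing observation about why the shift-by-constant freedom disappears, merely spell out what the paper leaves implicit.
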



\begin{proof}
	First note that, by \Cref{thm:RelationC_XandC_XYuncentred,theorem:DouglasExistenceQ}, $\uu{C}_{X}^{\dagger} \uu{C}_{XY}$ is well defined and bounded\textsuperscript{\ref{footnote:Carleman_trick}} and that for each $g\in\cG$ we may choose the function $h_g\in\cH$ in \Cref{assump:weakCMEuncentred} as $h_g = \uu{C}_{X}^{\dagger} \uu{C}_{XY}g$ by \Cref{prop:SpecificChoiceh_gUncentred}.
	By \Cref{lemma:TeproducingPropertyAdjoint} we obtain, for all $x\in\cX$ and $y\in\cY$,
	\begin{align*}
		h_{\psi(y)}(x)
		&=
		\big( (\uu{C}_{X}^{\dagger} \uu{C}_{XY})^{\ast} \varphi(x)\big)(y).
	\end{align*}
	This yields \eqref{equ:CMEmuNEWProjectionAlternativeUncentred} via
	\begin{align*}
		\Innerprod{ h }{ \big(\mu_{Y|X = \quark} - (\uu{C}_{X}^{\dagger} \uu{C}_{XY})^{\ast} \, \varphi(\quark)\big)(y) }_{L^2(\bP_X)}
		&=
		\innerprod{ h }{ f_{\psi(y)} - h_{\psi(y)} }_{L^2(\bP_X)} \\
		& =
		\uu{\Cov}[h(X),(f_{\psi(y)}-h_{\psi(y)})(X)]
		=
		0,
	\end{align*}
	which implies \eqref{equ:CMEmuNEWAlternativeUncentred} under any of the four conditions stated in the theorem (possibly using \Cref{prop:SpecificChoiceh_gUncentred}).
\end{proof}



\begin{theorem}[Uncentred CME under finite-rank approximation]
	\label{thm:CMEproperlyUncenteredLimit}
	Let \Cref{assumption:CME} hold.
	Further, let $(h_n)_{n\in\bN}$ be a complete orthonormal system of $\cH$ that is an eigenbasis of $C_{X}$, let $\cH^{(n)} \defeq \spn \{ h_1,\dots,h_n \}$, let $\cF \defeq \cG\oplus\cH$, let $P^{(n)}\colon\cF\to\cF$ be the orthogonal projection onto $\cG\oplus \cH^{(n)}$, and let
	\[
	\uu{C}
	\defeq
	\begin{pmatrix}
	\uu{C}_{Y} & \uu{C}_{YX}
	\\
	\uu{C}_{XY} & \uu{C}_{X}
	\end{pmatrix},
	\qquad
	\uu{C}^{(n)}
	\defeq
	P^{(n)}\uu{C}P^{(n)}
	=
	\begin{pmatrix}
	\uu{C}_{Y} & \uu{C}_{YX}^{(n)}
	\\
	\uu{C}_{XY}^{(n)} & \uu{C}_{X}^{(n)}
	\end{pmatrix}.
	\]
	Then $\ran \uu{C}_{XY}^{(n)}\subseteq \ran \uu{C}_{X}^{(n)}$ and therefore $\uu{h}_g^{(n)}\defeq \uu{C}_{X}^{(n)\dagger}\uu{C}_{XY}^{(n)}g\in\cH$ is well defined for each $g\in\cG$.
	For each $y\in\cY$ and $h\in\cH$,
	\begin{equation}
	\label{equ:CMEmuNEWProjectionLimitUncentred}
	\innerprod{ h }{ \mu_{Y|X = \quark}(y) }_{ L^{2}(\bP_X)} = \lim_{n\to\infty} \innerprod{ h }{ \mu^{(n)}(\quark,y) }_{ L^{2}(\bP_X)},
	\end{equation}
	where, for $x\in\cX$ and $y\in\cY$,
	\begin{equation}
	\label{equ:DefinitionU_Mu_n}
	\uu{\mu}^{(n)}(x,y) \defeq \big((\uu{C}_{X}^{(n)\dagger} \uu{C}_{XY}^{(n)})^{\ast} \varphi(x)\big)(y).
	\end{equation}
	Suppose in addition that any of the following four conditions holds:
	\begin{enumerate}[label=(\roman*)]
		\item \label{enum:kUniversal} the kernel $k$ is $L^{2}$-universal;
		\item \label{enum:Hdense} $\cH$ is dense in $L^{2}(\bP_X)$;
		\item \label{enum:Astar} \Cref{assump:strongCMElimit} holds;
		\item \label{enum:weakerAstar} $f_{\psi(y)}\in\overline{\cH}^{ L^{2}}$ for each $y\in\cY$.
	\end{enumerate}
	Then
	\begin{equation}
	\label{equ:CMElimitUncentred}
	\bignorm{ \uu{\mu}^{(n)}(X,\quark) - \mu_{Y|X} }_{ L^{2}(\bP ; \cG)}	
	\xrightarrow[n\to\infty]{} 0.
	\end{equation}
\end{theorem}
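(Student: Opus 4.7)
The argument closely mirrors that of \Cref{thm:CMEproperlyLimit}, with centred operators replaced by uncentred ones and with the roles played there by $L_{\cC}^{2}(\bP_X)$ and $\cH_{\cC}^{(n)}$ taken over by $L^{2}(\bP_X)$ and $\cH^{(n)}$. I would first establish $\ran \uu{C}_{XY}^{(n)} \subseteq \ran \uu{C}_X^{(n)}$: since $\uu{C}^{(n)} = P^{(n)}\uu{C}P^{(n)}$ inherits positivity and trace-class structure from $\uu{C}$, Baker's factorisation theorem \citep[Theorem~1]{baker1973joint} gives $\uu{C}_{XY}^{(n)} = (\uu{C}_X^{(n)})^{1/2}\, V\, \uu{C}_Y^{1/2}$ for some bounded $V$, and the finite-rankness of $\uu{C}_X^{(n)}$ forces the inclusion. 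This makes $\uu{h}_g^{(n)} = \uu{C}_X^{(n)\dagger}\uu{C}_{XY}^{(n)} g \in \cH$ well defined for every $g \in \cG$; applying \Cref{prop:SpecificChoiceh_gUncentred} to the pair $(\uu{C}_X^{(n)}, \uu{C}_{XY}^{(n)})$ then identifies $\uu{h}_g^{(n)}$ as the $L^{2}(\bP_X)$-orthogonal projection of $f_g$ onto $\cH^{(n)}$, which is the uncentred analogue of \Cref{lemma:TechnicalDetailsMainProof}\ref{item:TechnicalDetailsMainProofB}.

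From \Cref{lemma:TeproducingPropertyAdjoint} we read off $\uu{\mu}^{(n)}(x,y) = \uu{h}_{\psi(y)}^{(n)}(x)$, so \eqref{equ:CMEmuNEWProjectionLimitUncentred} reduces to showing $\innerprod{h}{f_{\psi(y)} - \uu{h}_{\psi(y)}^{(n)}}_{L^{2}(\bP_X)} \to 0$ for every fixed $h \in \cH$. Decomposing $h = P^{(n)}h + (I - P^{(n)})h$, the projection property annihilates the first summand; the second tends to zero because $P^{(n)}h \to h$ in $\cH$, hence in $L^{2}(\bP_X)$ by \eqref{equ:HnormStrongerThanL2norm}, combined with the uniform bound $\norm{\uu{h}_{\psi(y)}^{(n)}}_{L^{2}(\bP_X)} \le \norm{f_{\psi(y)}}_{L^{2}(\bP_X)}$ that comes directly from the projection identification. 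For \eqref{equ:CMElimitUncentred}, the implications \ref{enum:kUniversal}$\implies$\ref{enum:Hdense}$\implies$\ref{enum:Astar}$\implies$\ref{enum:weakerAstar} reduce matters to working under \ref{enum:weakerAstar}. Since $\bigcup_n \cH^{(n)}$ is $\norm{\quark}_\cH$-dense in $\cH$ and $\norm{\quark}_\cH$ dominates $\norm{\quark}_{L^{2}(\bP_X)}$ on $\cH$, $\bigcup_n \cH^{(n)}$ has the same $L^{2}(\bP_X)$-closure as $\cH$ itself; \Cref{lemma:ConvergenceOfProjections} then yields $\norm{\uu{h}_{\psi(y)}^{(n)} - f_{\psi(y)}}_{L^{2}(\bP_X)} \to 0$ for each $y \in \cY$.

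Finally, setting $\fm^{(n)}(x) \defeq (\uu{C}_X^{(n)\dagger}\uu{C}_{XY}^{(n)})^{\ast}\varphi(x)$ and $\fm(x) \defeq \mu_{Y|X=x}$, I would verify $\fm^{(n)}, \fm \in L^{2}(\bP_X;\cG)$ as in the centred proof (via \eqref{equ:FiniteSecondMomentOfUandV} and the fact that $\uu{h}_{\psi(\quark)}^{(n)}$ is an $L^{2}(\bP_X)$-projection) and conclude via \Cref{lemma:LiftingPropertiesFromL2RtoL2G}\ref{item:LiftingPropertyA}, which lifts the scalar $L^{2}(\bP_X)$ convergence of $([\fm^{(n)}](\quark))(y) \to ([\fm](\quark))(y)$ to Bochner convergence in $L^{2}(\bP_X;\cG) \cong L^{2}(\bP;\cG)$. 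The main obstacle I anticipate is correctly matching the uncentred projection identification to the hypotheses of the lifting lemma: unlike the centred case, no recentring constants $c_g^{(n)}$ appear, which simplifies the algebra but simultaneously removes the conditional-expectation interpretation that drove the martingale-based $\bP_X$-a.e.\ refinement of \Cref{thm:CMEproperlyLimit}, which is why only $L^{2}$ (and not almost-sure) convergence is asserted here.
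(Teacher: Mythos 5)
Your proposal is correct and takes essentially the same route as the paper, which itself simply declares the proof analogous to that of \Cref{thm:CMEproperlyLimit} with uncentred operators and with parts \ref{item:TechnicalDetailsMainProofC}, \ref{item:TechnicalDetailsMainProofD} and \ref{item:LiftingPropertyC}, \ref{item:LiftingPropertyD} of \Cref{lemma:TechnicalDetailsMainProof,lemma:LiftingPropertiesFromL2RtoL2G} replacing \ref{item:TechnicalDetailsMainProofA}, \ref{item:TechnicalDetailsMainProofB} and \ref{item:LiftingPropertyA}, \ref{item:LiftingPropertyB} --- you correctly note the need for an uncentred version of Baker's factorisation and the loss of the martingale structure that blocks the $\bP_X$-a.e.\ refinement. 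Two cosmetic points: your final citation should be to \Cref{lemma:LiftingPropertiesFromL2RtoL2G}\ref{item:LiftingPropertyC} and \ref{item:LiftingPropertyD} rather than \ref{item:LiftingPropertyA} (the bracketed classes $[\fm^{(n)}]$ live in the centred quotient, whereas here you need plain $L^{2}(\bP_X;\cG)$ convergence without recentring), and your splitting $h = P^{(n)}h + (I-P^{(n)})h$ with non-expansivity of the $L^{2}$-projection is a clean, slightly more elementary substitute for the paper's argument via the weak convergence $\uu{C}^{(n)}\to\uu{C}$ in \Cref{lemma:TechnicalDetailsMainProof}\ref{item:TechnicalDetailsMainProofC}.
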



\begin{proof}
	The proof goes analogously to the one of \Cref{thm:CMEproperlyLimit} up to equation	\eqref{equ:ConvergenceL2_P_X_G}, using uncentred operators instead of centred ones and the statements \ref{item:LiftingPropertyC}, \ref{item:LiftingPropertyD} instead of \ref{item:LiftingPropertyA}, \ref{item:LiftingPropertyB} of \Cref{lemma:TechnicalDetailsMainProof,lemma:LiftingPropertiesFromL2RtoL2G}.
	However, we cannot draw the final conclusion of convergence almost everywhere since we do not have the martingale property, which is provided by \Cref{lemma:MartingaleConsequences} for the centred case.	
	Note that our proof relies on \citet[Theorem~1]{baker1973joint} which, strictly speaking, only treats the centred case, but its uncentred version can be proven similarly.
\end{proof}

\begin{corollary}
	Under the assumptions of \Cref{thm:CMEproperlyUncentered} (including either of the additional ones),
	\[
	\mu_{Y} = (\uu{C}_{X}^{\dagger} \uu{C}_{XY})^{\ast} \mu_{X}.
	\]
	Under the assumptions of \Cref{thm:CMEproperlyUncenteredLimit} (including either of the additional ones),
	\[
	\bignorm{\mu_{Y} - (\uu{C}_{X}^{(n)\dagger} \uu{C}_{XY}^{(n)})^{\ast} \mu_{X}}_{\cG}
	\xrightarrow[n\to\infty]{} 0.
	\]
\end{corollary}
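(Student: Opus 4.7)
The plan is to derive both identities from the tower property of conditional expectation, $\mu_{Y} = \bE[\psi(Y)] = \bE\bigl[\bE[\psi(Y)\mid X]\bigr] = \bE[\mu_{Y|X}]$, combined with the pointwise CME formula \eqref{equ:CMEmuNEWAlternativeUncentred} (respectively its finite-rank approximation \eqref{equ:DefinitionU_Mu_n}), by pulling the relevant bounded operator outside the Bochner integral.

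For the first statement, I would argue as follows. Under the hypotheses of \Cref{thm:CMEproperlyUncentered}, the operator $(\uu{C}_{X}^{\dagger} \uu{C}_{XY})^{\ast}$ is globally defined and bounded from $\cH$ to $\cG$, and \eqref{equ:CMEmuNEWAlternativeUncentred} gives $\mu_{Y|X=x} = (\uu{C}_{X}^{\dagger} \uu{C}_{XY})^{\ast}\varphi(x)$ for $\bP_X$-a.e.\ $x\in\cX$. Since $\varphi(X)\in L^{2}(\bP;\cH)\subseteq L^{1}(\bP;\cH)$ by \eqref{equ:FiniteSecondMomentOfUandV}, and bounded linear operators commute with Bochner integrals (see \citet[Theorem~II.2.6]{diestel1977}), we obtain
\[
\mu_{Y} = \bE[\mu_{Y|X}] = \bE\bigl[(\uu{C}_{X}^{\dagger} \uu{C}_{XY})^{\ast}\varphi(X)\bigr] = (\uu{C}_{X}^{\dagger} \uu{C}_{XY})^{\ast}\bE[\varphi(X)] = (\uu{C}_{X}^{\dagger} \uu{C}_{XY})^{\ast}\mu_{X}.
\]

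For the second statement, the plan is to deduce convergence of the means from convergence in $L^{2}(\bP;\cG)$. By \eqref{equ:CMElimitUncentred} we already know $\uu{\mu}^{(n)}(X,\quark)\to \mu_{Y|X}$ in $L^{2}(\bP;\cG)$; since $\bP$ is a finite measure, this gives convergence in $L^{1}(\bP;\cG)$ and hence convergence of the Bochner integrals $\bE[\uu{\mu}^{(n)}(X,\quark)] \to \bE[\mu_{Y|X}] = \mu_{Y}$ in $\cG$. On the other hand, applying the same bounded-operator/Bochner-integral interchange to the finite-rank operator $(\uu{C}_{X}^{(n)\dagger}\uu{C}_{XY}^{(n)})^{\ast}$ gives
\[
\bE[\uu{\mu}^{(n)}(X,\quark)] = \bE\bigl[(\uu{C}_{X}^{(n)\dagger}\uu{C}_{XY}^{(n)})^{\ast}\varphi(X)\bigr] = (\uu{C}_{X}^{(n)\dagger}\uu{C}_{XY}^{(n)})^{\ast}\mu_{X},
\]
and the claimed convergence $\bignorm{\mu_{Y} - (\uu{C}_{X}^{(n)\dagger}\uu{C}_{XY}^{(n)})^{\ast}\mu_{X}}_{\cG}\to 0$ follows.

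There is no substantive obstacle here; the corollary is essentially the statement that the sanity check ``averaging the CME over $x$ recovers $\mu_{Y}$'' holds for the formulae \eqref{equ:myCMEuncentred} and their finite-rank approximations. The only points requiring care are the integrability needed to invoke the Bochner commutation property (handled by \eqref{equ:FiniteSecondMomentOfUandV} and boundedness of the operators from \Cref{thm:CMEproperlyUncentered}) and the passage from $L^{2}(\bP;\cG)$ to $L^{1}(\bP;\cG)$ convergence in the second statement (immediate since $\bP(\Omega)<\infty$).
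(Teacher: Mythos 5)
Your proposal is correct and follows essentially the same route as the paper: the first identity via the law of total expectation and commuting the bounded operator $(\uu{C}_{X}^{\dagger}\uu{C}_{XY})^{\ast}$ with the Bochner integral, and the second by bounding $\bignorm{\bE[\mu_{Y|X} - (\uu{C}_{X}^{(n)\dagger}\uu{C}_{XY}^{(n)})^{\ast}\varphi(X)]}_{\cG}$ by the $L^{2}(\bP;\cG)$ norm (the paper phrases your $L^{2}\to L^{1}\to$ convergence-of-integrals step as a single application of Jensen's inequality, which is the same estimate).
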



\begin{proof}
	As stated in \Cref{thm:CMEproperlyUncentered}, $\uu{C}_{X}^{\dagger} \uu{C}_{XY}$ is a well-defined and bounded\textsuperscript{\ref{footnote:Carleman_trick}} linear operator.
	Hence, by the law of total expectation and \Cref{thm:CMEproperlyUncentered},
	\[
	\mu_{Y}
	=
	\bE[\mu_{Y|X}]
	=
	\bE\big[ (\uu{C}_{X}^{\dagger} \uu{C}_{XY})^{\ast} \varphi(X) \big]
	=
	(\uu{C}_{X}^{\dagger} \uu{C}_{XY})^{\ast} \bE[\varphi(X)]
	=
	(\uu{C}_{X}^{\dagger} \uu{C}_{XY})^{\ast} \mu_{X} ,
	\]
	proving the first claim. The second one follows from Jensen's inequality and \Cref{thm:CMEproperlyUncenteredLimit} via
	\begin{align*}
	\bignorm{\mu_{Y} - (\uu{C}_{X}^{(n)\dagger} \uu{C}_{XY}^{(n)})^{\ast} \mu_{X}}_{\cG}^{2}
	&=
	\bignorm{\bE[\mu_{Y|X}] - (\uu{C}_{X}^{(n)\dagger} \uu{C}_{XY}^{(n)})^{\ast} \, \bE[\varphi(X)]}_{\cG}^{2}
	\\
	&=
	\bignorm{\bE \big[\mu_{Y|X} - (\uu{C}_{X}^{(n)\dagger} \uu{C}_{XY}^{(n)})^{\ast} \varphi(X)\big]}_{\cG}^{2}
	\\
	& \le
	\bignorm{\mu_{Y|X} - (\uu{C}_{X}^{(n)\dagger} \uu{C}_{XY}^{(n)})^{\ast} \varphi(X)}_{L^2(\bP_{X} ; \cG)}^{2}
	\xrightarrow[n\to\infty]{} 0.
	\end{align*}
\end{proof}


%
%


\section{Gaussian Conditioning in Hilbert spaces}
\label{section:GaussianConditioning}

This section gives a review of conditioning theory for Gaussian random variables in separable Hilbert spaces, summarising the work of \citet{owhadi2015conditioning}.
Our only somewhat novel contribution here is the explicit characterisation of the essential operator $\widehat Q_{C,\cH}$ in terms of the Moore--Penrose pseudo-inverse, which appears as an exercise for the reader in \citet[Remark~2.3]{arias2008gi_douglas}.

In the following let $\cF = \cG\oplus\cH$ be the sum of two separable Hilbert spaces $\cG$ and $\cH$ and let $(U, V)$ be an $\cF$-valued jointly Gaussian random variable with mean $\mu\in\cF$ and covariance operator $C\colon\cF\to\cF$ given by the following block structures:
\[
	\begin{pmatrix}
	U\\ V
	\end{pmatrix}
	\sim\cN(\mu,C),
	\qquad
	\mu
	=
	\begin{pmatrix}
	\mu_U
	\\
	\mu_V
	\end{pmatrix},
	\qquad
	C
	=
	\begin{pmatrix}
	C_U & C_{UV}
	\\
	C_{VU} & C_V
	\end{pmatrix}\ge 0
\]
with $\mu_{U} \in \cG$, etc.
We denote by $L(\cF)$ the Banach algebra of bounded linear operators on $\cF$ and by $L_+(\cF) = \{A\in L(\cF) \mid A\ge 0 \}$ the set of positive operators, i.e.\ those self-adjoint operators $A$ for which $\innerprod{ x }{ A x } \geq 0$ for all $x\in\cF$.
The theory of Gaussian conditioning relies on the concept of so-called \emph{oblique projections}:

\begin{definition}
	\label{def:ObliqueProjections}
	Let $\cF = \cG\oplus \cH$ be a direct sum of two Hilbert spaces $\cG$ and $\cH$ and let $C\in L_+(\cF)$ be a positive operator.
	The set of ($C$-symmetric) \emph{oblique projections} onto $\cH$ is given by
	\[
		\cP(C,\cH)
		=
		\{Q\in L(\cF) \mid Q^2=Q,\ \ran Q = \cH,\ CQ = Q^{\ast} C\}.
	\]
	The pair $(C,\cH)$ is said to be \emph{compatible} if $\cP (C,\cH)$ is non-empty.
\end{definition}

The first two conditions $Q^2=Q$ and $\ran Q = \cH$ imply that $Q$ has the block structure
\begin{equation}
	\label{equ:BlockStructureQ}
	Q=\begin{pmatrix}
	0 & 0 \\ \widehat{Q} & \Id_{\cH}
	\end{pmatrix},
	\qquad
	\widehat{Q} \colon \cG\to\cH.
\end{equation}
Then, the condition $CQ = Q^{\ast}C$ is equivalent to $C_V \widehat{Q} = C_{VU}$ (which follows from a straightforward blockwise multiplication, see \Cref{lemma:SufficientConditionOblique}) and implies in particular $\ran C_{VU}\subseteq\ran C_V$.
The other way round, as we will see later on, the condition $\ran C_{VU}\subseteq\ran C_V$ guarantees the existence of an oblique projection $Q\in \cP(C,\cH)$ and will provide a crucial link between the theory of Gaussian conditioning and conditional mean embeddings in \Cref{section:ConnectionGaussianCME}.

The results on conditioning Gaussian measures can then be summarised as follows:


\begin{theorem}[{\citealp[Theorem~3.3, Corollary~3.4]{owhadi2015conditioning}}]
	\label{thm:MainTheoremOwhadiScovel}
	If $(C,\cH)$ is compatible, then conditioning $U$ on $V=v\in\cH$ results in a Gaussian random variable on $\cG$ with mean $\mu_{U|V = v}$ and covariance operator $C_{U|V = v}$ given by
	\begin{equation}
		\label{equ:GaussianConditioningCompatibleCase}
		\begin{cases}
		\mu_{U|V = v} = \mu_U + \widehat{Q}^{\ast}(v-\mu_V),
		\\
		C_{U|V = v} = C_U - C_{UV} \widehat{Q}
		\end{cases}
	\end{equation}
	for any oblique projection $Q\in \cP(C,\cH)$ given in the form \eqref{equ:BlockStructureQ}.
	Also, in this case, $\cP(C,\cH)$ contains a unique element
	\[
		Q_{C,\cH} =\begin{pmatrix}
		0 & 0 \\ \widehat{Q}_{C,\cH} & \Id_{\cH}
		\end{pmatrix}
	\]
	that fulfils the properties \eqref{equ:SufficientConditionsUniqueOblique} defined below.

	If $(C,\cH)$ is incompatible, then conditioning $U$ on $V=v\in\cH$ still yields a Gaussian random variable on $\cG$, but the corresponding formulae for the conditional mean $\mu_{U|V = v}$ and covariance operator $C_{U|V = v}$ are given by a limiting process using finite-rank approximations of $C$ in the following way.
	Let $(h_n)_{n\in\bN}$ be a complete orthonormal system of $\cH$, $P^{(n)}\colon\cF\to\cF$ denote the orthogonal projection onto $\cG\oplus \spn \{ h_1,\dots,h_n \}$ and $C^{(n)} = P^{(n)}CP^{(n)}$.
	Then $(C^{(n)},\cH)$ is compatible for each $n\in\bN$ and, for $\bP_{V}$-a.e.\ $v\in\cH$ (with $\bP_{V}$ denoting the distribution of $V$),
	\begin{equation}
		\label{equ:GaussianConditioningIncompatibleCase}
		\begin{cases}
		\mu_{U|V = v} = \mu_U + \lim_{n\to\infty}\widehat{Q}_{C^{(n)},\cH}^{\ast}(v-\mu_V),
		\\[0.1cm]
		C_{U|V = v} = C_U - \lim_{n\to\infty} C_{UV} \widehat{Q}_{C^{(n)},\cH},
		\end{cases}
	\end{equation}
	where the second limit is in the trace norm.
\end{theorem}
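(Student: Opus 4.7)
My plan is to split the argument into the compatible and incompatible cases and to handle the former via the standard finite-dimensional trick of writing $U$ as an affine function of $V$ plus an independent Gaussian remainder. Fix any $Q \in \cP(C,\cH)$ with corresponding $\widehat{Q}\colon\cG\to\cH$ as in \eqref{equ:BlockStructureQ}, and let $W \defeq U - \widehat{Q}^{\ast}(V - \mu_V)$. Since $(W,V)$ is a bounded affine transformation of the jointly Gaussian pair $(U,V)$, it is itself jointly Gaussian. The defining identity $CQ = Q^{\ast}C$, by blockwise multiplication, is equivalent to $C_V\widehat{Q} = C_{VU}$ (and, on taking adjoints, to $\widehat{Q}^{\ast}C_V = C_{UV}$); these are precisely the identities needed for the direct computation
\[
\bE[W] = \mu_U, \quad \Cov(W,V) = C_{UV} - \widehat{Q}^{\ast}C_V = 0, \quad \Cov(W) = C_U - C_{UV}\widehat{Q},
\]
the last following from the cancellation $\widehat{Q}^{\ast}C_V\widehat{Q} = \widehat{Q}^{\ast}C_{VU}$.

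Since $(W,V)$ is jointly Gaussian and uncorrelated, $W$ and $V$ are independent, so conditioning $U = W + \widehat{Q}^{\ast}(V - \mu_V)$ on $V=v$ merely adds the deterministic shift $\widehat{Q}^{\ast}(v-\mu_V)$ to $\bE[W] = \mu_U$ without altering $\Cov(W)$, yielding exactly $\cN(\mu_U + \widehat{Q}^{\ast}(v - \mu_V),\, C_U - C_{UV}\widehat{Q})$. To see that this law does not depend on the choice of $Q$, I would note that if $Q_1,Q_2\in\cP(C,\cH)$ and $D \defeq \widehat{Q}_1 - \widehat{Q}_2$, then $C_V D = 0$, so $\ran D \subseteq \ker C_V$ and $\ker D^{\ast}\supseteq \overline{\ran C_V}$; since the support of $\bP_V$ is contained in $\mu_V + \overline{\ran C_V^{1/2}}$, this forces $D^{\ast}(v - \mu_V) = 0$ for $\bP_V$-a.e.\ $v$, and analogously $C_{UV}D = 0$ from the adjoint relation.

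For the explicit characterisation of the distinguished element $Q_{C,\cH}$, I would verify that the Moore--Penrose candidate $\widehat{Q}_{C,\cH} \defeq C_V^{\dagger}C_{VU}$ belongs to $\cP(C,\cH)$ whenever $(C,\cH)$ is compatible: compatibility forces $\ran C_{VU} \subseteq \ran C_V$, and $C_V C_V^{\dagger}$ is the orthogonal projector onto $\overline{\ran C_V}$, so $C_V \widehat{Q}_{C,\cH} = C_{VU}$. The defining property $\ran C_V^{\dagger} \subseteq (\ker C_V)^{\perp}$ of the pseudo-inverse then singles out $\widehat{Q}_{C,\cH}$ as the unique least-norm solution of $C_V \widehat{Q} = C_{VU}$, which I expect to match the minimality conditions referenced in \eqref{equ:SufficientConditionsUniqueOblique}.

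For the incompatible case the strategy is finite-rank approximation. With $C^{(n)} \defeq P^{(n)}CP^{(n)}$, Baker's factorisation $C_{VU}^{(n)} = (C_V^{(n)})^{1/2}V_n C_U^{1/2}$ combined with the finite rank of $C_V^{(n)}$ gives $\ran C_{VU}^{(n)} \subseteq \ran C_V^{(n)}$, so each $(C^{(n)},\cH)$ is compatible and the preceding analysis produces Gaussian conditional laws with parameters expressed through $\widehat{Q}_{C^{(n)},\cH}$. The main obstacle is to show that these finite-rank parameters converge to the true conditional parameters of the original (incompatible) problem. I would set this up as a martingale argument: the maps $v \mapsto \widehat{Q}_{C^{(n)},\cH}^{\ast}(v-\mu_V)$ can be exhibited as $L^{2}(\bP_V;\cG)$-orthogonal projections of the true conditional mean $\bE[U - \mu_U \mid V=v]$ onto the nested filtration spanned by $\{h_1,\dots,h_n\}$, so Doob's $L^{2}$ martingale convergence theorem delivers $\bP_V$-a.e.\ and $L^{2}$ convergence to the true conditional mean, while trace-norm convergence of the covariance corrections $C_{UV}\widehat{Q}_{C^{(n)},\cH}$ follows from monotonicity in the positive-operator order (a manifestation of the tower property for conditional variances) together with the trace-class character of $C$.
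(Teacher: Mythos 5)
This theorem is not proved in the paper at all: it is imported from \citet{owhadi2015conditioning}, as the citation in the theorem header indicates, and \Cref{section:GaussianConditioning} explicitly presents itself as a review. Your proposal is therefore a reconstruction rather than an alternative to an internal argument, and as a reconstruction it is essentially sound and follows the same mechanism as the cited source: the blockwise equivalence $CQ=Q^{\ast}C\iff C_V\widehat{Q}=C_{VU}$ is correct, the decomposition $U=W+\widehat{Q}^{\ast}(V-\mu_V)$ with $W$ uncorrelated with --- hence, by joint Gaussianity, independent of --- $V$ is exactly the right engine for the compatible case, and your argument that the resulting law is independent of the choice of $Q$ (via $\ran(\widehat{Q}_1-\widehat{Q}_2)\subseteq\ker C_V$ together with $\supp\bP_V\subseteq\mu_V+\overline{\ran C_V}$) is the standard one. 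Three points would need real work in a full write-up. First, asserting that $C_V^{\dagger}C_{VU}$ belongs to $\cP(C,\cH)$ presupposes that this operator is \emph{bounded}, which is not automatic since $C_V^{\dagger}$ is unbounded; this is precisely Douglas's lemma (the paper's \Cref{theorem:DouglasExistenceQ}) and requires the closed graph theorem. Second, in the incompatible case your martingale argument correctly delivers the conditional mean (after checking that $\sigma\bigl(\bigcup_n\sigma(P^{(n)}V)\bigr)=\sigma(V)$ so that the martingale closes at $\bE[U\,|\,V]$), but the claim that the conditional law is Gaussian with a $v$-independent covariance is not a consequence of the tower-property remark alone: one must show that $W\defeq U-\bE[U\,|\,V]$ is independent of $V$, which follows by passing to the limit in the relations $\Cov[W_n,P^{(m)}V]=0$ for $m\le n$ and using that an $L^{2}$ limit of jointly Gaussian vectors is jointly Gaussian. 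Third, the trace-norm convergence of $C_{UV}\widehat{Q}_{C^{(n)},\cH}$ does follow from monotonicity of $\Cov[U-\bE[U\,|\,\sigma(P^{(n)}V)]]$ in the positive-operator order plus convergence of the traces, but that deduction should be spelled out. None of these is a fatal gap; they are the places where your sketch currently relies on assertions rather than proof.
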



In the following we will revisit some theory on oblique projections which will be necessary to establish the connection between Gaussian conditioning and conditional mean embeddings.
We will also characterise the special oblique projection $Q_{C,\cH}\in \cP(C,\cH)$ by means of the Moore--Penrose pseudo-inverse.

\begin{lemma}
	\label{lemma:SufficientConditionOblique}
	If $\widehat{Q}\colon \cG\to\cH$ is a bounded linear operator such that $C_V\widehat{Q} = C_{VU}$, then
	\[
		Q = \begin{pmatrix} 0 & 0\\ \widehat{Q} & \Id_{\cH} \end{pmatrix}\in \cP(C,\cH).
	\]
	In particular, the pair $(C,\cH)$ is compatible.
\end{lemma}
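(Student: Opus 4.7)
The plan is to verify directly the three defining properties of an element of $\cP(C,\cH)$ for the operator $Q$ with the prescribed block form, treating the hypothesis $C_V\widehat Q=C_{VU}$ as the only non-trivial input; everything else is routine blockwise matrix manipulation.

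First I would check that $Q$ is bounded and idempotent. Acting on $(u,v)\in\cG\oplus\cH$, the operator $Q$ sends $(u,v)\mapsto(0,\widehat Q u+v)$, so $\norm{Q(u,v)}_\cF\le(\norm{\widehat Q}+1)\norm{(u,v)}_\cF$, giving boundedness. For idempotency,
\[
Q^2(u,v)=Q(0,\widehat Q u+v)=(0,\widehat Q\cdot 0+(\widehat Q u+v))=(0,\widehat Q u+v)=Q(u,v),
\]
so $Q^2=Q$. The range is visibly $\{0\}\oplus\cH$, which we identify with $\cH$, giving $\ran Q=\cH$.

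Next I would verify the symmetry condition $CQ=Q^{\ast}C$ by a $2\times 2$ block computation. On one hand,
\[
CQ=\begin{pmatrix}C_U & C_{UV}\\ C_{VU} & C_V\end{pmatrix}\begin{pmatrix}0 & 0\\ \widehat Q & \Id_\cH\end{pmatrix}=\begin{pmatrix}C_{UV}\widehat Q & C_{UV}\\ C_V\widehat Q & C_V\end{pmatrix}.
\]
On the other, since $Q^{\ast}=\begin{pmatrix}0 & \widehat Q^{\ast}\\ 0 & \Id_\cH\end{pmatrix}$,
\[
Q^{\ast}C=\begin{pmatrix}\widehat Q^{\ast}C_{VU} & \widehat Q^{\ast}C_V\\ C_{VU} & C_V\end{pmatrix}.
\]
The off-diagonal block in the second row and first column matches the hypothesis $C_V\widehat Q=C_{VU}$, and the diagonal $C_V=C_V$ block is trivial. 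For the top row, I would take adjoints of the hypothesis, using self-adjointness of $C_V$ and the identity $C_{UV}=C_{VU}^{\ast}$, to obtain $\widehat Q^{\ast}C_V=C_{UV}$, which settles the upper-right block. Finally, the upper-left block reduces to showing $C_{UV}\widehat Q=\widehat Q^{\ast}C_{VU}$; substituting $C_{VU}=C_V\widehat Q$ on the right and $C_{UV}=\widehat Q^{\ast}C_V$ on the left, both sides become $\widehat Q^{\ast}C_V\widehat Q$, so equality holds.

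The only mildly subtle step is recognizing that one must take adjoints of the hypothesis to handle the upper blocks of $CQ=Q^{\ast}C$; apart from that, the argument is a direct block calculation, and compatibility of $(C,\cH)$ then follows immediately because the constructed $Q$ witnesses non-emptiness of $\cP(C,\cH)$.
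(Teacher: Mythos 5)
Your proof is correct and is exactly the argument the paper intends: the paper simply asserts that $Q^2=Q$ and $\ran Q=\cH$ are clear and that $CQ=Q^{\ast}C$ follows by a "straightforward blockwise multiplication", which is precisely the computation you carry out in detail (including the adjoint of the hypothesis, using self-adjointness of $C_V$ and $C_{UV}=C_{VU}^{\ast}$, to settle the upper blocks). No differences in approach, and no gaps.
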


\begin{proof}
	The properties $Q^2=Q$ and $\ran Q = \cH$ are clear from the definition of $Q$ and a straightforward blockwise multiplication shows that $CQ = Q^{\ast} C$.
\end{proof}

\begin{proposition}
	\label{prop:CompatibleImpliesSpecialElement}
	In the setup of \Cref{def:ObliqueProjections}, if $(C,\cH)$ is compatible, then there exists a unique bounded operator $\widehat{Q}_{C,\cH}\colon \cG\to\cH$ such that
	\begin{equation}
	\label{equ:SufficientConditionsUniqueOblique}
	C_V\widehat{Q}_{C,\cH} = C_{VU},
	\qquad
	\ker \widehat{Q}_{C,\cH} = \ker C_{VU},
	\qquad
	\ran \widehat{Q}_{C,\cH} \subseteq \overline{\ran C_V}.
	\end{equation}
	By \Cref{lemma:SufficientConditionOblique} the first property implies that
	\[
	Q_{C,\cH} = \begin{pmatrix} 0 & 0\\ \widehat{Q}_{C,\cH} & \Id_{\cH} \end{pmatrix}\in \cP(C,\cH).
	\]
\end{proposition}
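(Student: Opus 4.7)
The natural candidate, motivated by the hint that $\widehat{Q}_{C,\cH}$ admits an explicit characterisation through the Moore--Penrose pseudo-inverse, is
\[
  \widehat{Q}_{C,\cH} \defeq C_V^{\dagger} C_{VU}.
\]
The plan is first to show that this is a well-defined bounded operator, then to verify the three properties in \eqref{equ:SufficientConditionsUniqueOblique}, and finally to derive uniqueness from the last two of them.

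My starting point would be the observation that compatibility of $(C,\cH)$ gives some $Q\in\cP(C,\cH)$ of the form \eqref{equ:BlockStructureQ}, and the defining identity $CQ = Q^{\ast}C$ unpacks blockwise to $C_V\widehat{Q} = C_{VU}$. This already yields $\ran C_{VU}\subseteq\ran C_V$, so $\ran C_{VU}\subseteq \dom C_V^{\dagger}$, and $\widehat{Q}_{C,\cH} = C_V^{\dagger}C_{VU}$ makes sense as a linear operator. Its boundedness is exactly the content of \Cref{theorem:DouglasExistenceQ} (Douglas's range inclusion theorem) applied to the self-adjoint operator $C_V$ and the operator $C_{VU}$.

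Next, I would verify the three properties. For $C_V\widehat{Q}_{C,\cH} = C_{VU}$: the composition $C_V C_V^{\dagger}$ equals the identity on $\ran C_V$ by the very definition of the Moore--Penrose pseudo-inverse in \Cref{assumption:CME}, and since $\ran C_{VU}\subseteq \ran C_V$ this gives $C_V C_V^{\dagger}C_{VU} = C_{VU}$. For $\ker \widehat{Q}_{C,\cH} = \ker C_{VU}$: the inclusion $\ker C_{VU}\subseteq\ker(C_V^{\dagger}C_{VU})$ is trivial, and for the reverse direction note that $C_V^{\dagger}|_{\ran C_V} = (C_V|_{(\ker C_V)^{\perp}})^{-1}$ is injective, so $C_V^{\dagger}C_{VU}g = 0$ forces $C_{VU}g = 0$. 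For $\ran \widehat{Q}_{C,\cH}\subseteq \overline{\ran C_V}$: by construction $\ran C_V^{\dagger} \subseteq (\ker C_V^{\dagger})^{\perp} \oplus \{0\} = (\ker C_V)^{\perp} = \overline{\ran C_V}$ using self-adjointness of $C_V$.

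Uniqueness is then a short argument using only the first and third properties. If $\widehat{Q}_1,\widehat{Q}_2\colon\cG\to\cH$ both satisfy $C_V\widehat{Q}_i = C_{VU}$ and $\ran\widehat{Q}_i\subseteq \overline{\ran C_V}$, then $D \defeq \widehat{Q}_1 - \widehat{Q}_2$ satisfies $C_V D = 0$, so $\ran D\subseteq \ker C_V$, while simultaneously $\ran D\subseteq \overline{\ran C_V} = (\ker C_V)^{\perp}$; hence $\ran D\subseteq \ker C_V \cap (\ker C_V)^{\perp} = \{0\}$. The final line of the proposition is then immediate from \Cref{lemma:SufficientConditionOblique} applied to $\widehat{Q}_{C,\cH}$.

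The only genuinely delicate step is boundedness of $C_V^{\dagger}C_{VU}$: a priori $C_V^{\dagger}$ is unbounded (since $\ran C_V$ is typically not closed in the separable Hilbert setting), so one must invoke a non-trivial result --- Douglas's theorem / the closed graph theorem via \Cref{theorem:DouglasExistenceQ} --- to conclude that the composition nonetheless extends to a bounded operator. The remainder is essentially formal bookkeeping with the Moore--Penrose calculus.
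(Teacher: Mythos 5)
Your proof is correct, and it rests on the same mathematical engine as the paper's: Douglas's range-inclusion theorem, which the paper records as \Cref{theorem:DouglasExistenceQ}. The difference is one of presentation rather than substance. The paper's own proof of \Cref{prop:CompatibleImpliesSpecialElement} is a bare citation (to \citealp{douglas1966majorization}, \citealp{fillmore1971operator}, and \citealp{corach2000oblique}) and deliberately does \emph{not} exhibit $\widehat{Q}_{C,\cH}$ explicitly at this stage; the identification $\widehat{Q}_{C,\cH} = C_V^\dagger C_{VU}$ is deferred to \Cref{thm:UniqueProjectionPseudoInverse} and \Cref{thm:UniqueObliqueProjectionEqualsPseudoInverse}. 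You instead front-load that identification: you extract $\ran C_{VU} \subseteq \ran C_V$ from the blockwise unpacking of $CQ = Q^{\ast}C$ for any $Q \in \cP(C,\cH)$, define $\widehat{Q}_{C,\cH} \defeq C_V^\dagger C_{VU}$, invoke \Cref{theorem:DouglasExistenceQ} for boundedness, and verify the three conditions of \eqref{equ:SufficientConditionsUniqueOblique} plus uniqueness by hand. All of these verifications check out (your uniqueness argument, using only the first and third conditions and the identity $\overline{\ran C_V} = (\ker C_V)^{\perp}$ for self-adjoint $C_V$, is exactly right), though they partly duplicate what \Cref{theorem:DouglasExistenceQ} already asserts about $B^\dagger A$. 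What your route buys is a self-contained proof that merges the Proposition with the later pseudo-inverse characterisation; what the paper's route buys is a cleaner separation between the abstract existence/uniqueness statement and its concrete realisation. One cosmetic remark: the expression $(\ker C_V^{\dagger})^{\perp} \oplus \{0\}$ is an odd way to write $\overline{\ran C_V}$ --- it is simpler to say that, by the definition of the pseudo-inverse in \Cref{assumption:CME}, $\ran C_V^{\dagger} \subseteq (\ker C_V)^{\perp} = \overline{\ran C_V}$.
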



\begin{proof}
	See \citet[Theorem~1]{douglas1966majorization} or \citet[Theorem~2.1]{fillmore1971operator} for the existence and uniqueness of $\widehat{Q}_{C,\cH}$ and \citet{corach2000oblique} or \citet{owhadi2015conditioning} for its connection to oblique projections.
\end{proof}

If one follows the original construction of \citet[Theorem~1]{douglas1966majorization} or \citet[Theorem~2.1]{fillmore1971operator}, it is easy to see how this unique element can be characterised in terms of the Moore--Penrose pseudo-inverse $C_V^\dagger$ of $C_V$:

\begin{theorem}
	\label{thm:UniqueProjectionPseudoInverse}
	If $\ran C_{VU} \subseteq \ran C_V$, then $\widehat{Q} = C_V^\dagger C_{VU}\colon\cG\to\cH$ is a well-defined bounded operator which uniquely fulfils the conditions \eqref{equ:SufficientConditionsUniqueOblique}.
\end{theorem}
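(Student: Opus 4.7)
The plan is to verify that $\widehat{Q} = C_V^{\dagger} C_{VU}$ is well-defined, satisfies the three conditions in \eqref{equ:SufficientConditionsUniqueOblique}, and is bounded; uniqueness is already guaranteed by \Cref{prop:CompatibleImpliesSpecialElement}. First, I would observe that $\dom C_V^{\dagger} = \ran C_V \oplus (\ran C_V)^{\perp}$ contains $\ran C_V$ by definition, and the hypothesis $\ran C_{VU} \subseteq \ran C_V$ therefore ensures that $C_{VU} g \in \dom C_V^{\dagger}$ for every $g \in \cG$, so $\widehat{Q}$ is well-defined as a (possibly unbounded) linear map on all of $\cG$.

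Next I would verify the three algebraic properties. For the first, since $C_{VU} g \in \ran C_V$ for each $g$, the Moore--Penrose identity $C_V C_V^{\dagger}|_{\ran C_V} = \Id_{\ran C_V}$ gives $C_V \widehat{Q} g = C_V C_V^{\dagger} C_{VU} g = C_{VU} g$, so $C_V \widehat{Q} = C_{VU}$. For the range condition, the defining property $\ker C_V^{\dagger} = (\ran C_V)^{\perp}$, together with $\ran C_V^{\dagger} \subseteq (\ker C_V)^{\perp}$ and the self-adjointness of $C_V$ (so $(\ker C_V)^{\perp} = \overline{\ran C_V}$), yields $\ran \widehat{Q} \subseteq \ran C_V^{\dagger} \subseteq \overline{\ran C_V}$. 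For the kernel condition, $\widehat{Q} g = 0$ iff $C_V^{\dagger} C_{VU} g = 0$; since $C_{VU} g \in \ran C_V$ and $C_V^{\dagger}$ is injective on $\ran C_V$ (being the inverse of $C_V|_{(\ker C_V)^{\perp}}$ there), this is equivalent to $C_{VU} g = 0$, giving $\ker \widehat{Q} = \ker C_{VU}$.

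The main obstacle is boundedness, because $C_V^{\dagger}$ itself is typically unbounded (its range $\ran C_V$ need not be closed when $\dim \cH = \infty$). I would establish boundedness via the closed graph theorem applied to the composition. Suppose $g_n \to g$ in $\cG$ and $\widehat{Q} g_n \to h$ in $\cH$. Continuity of $C_{VU}$ gives $C_{VU} g_n \to C_{VU} g$, and continuity of $C_V$ together with the already-proven identity $C_V \widehat{Q} g_n = C_{VU} g_n$ gives $C_V h = C_{VU} g = C_V \widehat{Q} g$. Hence $h - \widehat{Q} g \in \ker C_V$. On the other hand, both $h$ and $\widehat{Q} g$ lie in $\overline{\ran C_V} = (\ker C_V)^{\perp}$ (by the range property just verified, and since $\overline{\ran C_V}$ is closed). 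Therefore $h - \widehat{Q} g \in \ker C_V \cap (\ker C_V)^{\perp} = \{0\}$, so $h = \widehat{Q} g$ and the graph of $\widehat{Q}$ is closed. Alternatively, one can invoke \Cref{theorem:DouglasExistenceQ} (Douglas's factorisation theorem) directly, which furnishes a bounded solution $X$ to $C_V X = C_{VU}$ with $\ran X \subseteq \overline{\ran C_V}$ and $\ker X = \ker C_{VU}$; the uniqueness clause of that theorem (or equivalently \Cref{prop:CompatibleImpliesSpecialElement}) then forces $X = \widehat{Q}$, which both concludes boundedness and identifies $\widehat{Q}$ with $\widehat{Q}_{C,\cH}$.
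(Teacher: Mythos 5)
Your proof is correct, but it is considerably more self-contained than the paper's, which consists of the single line ``this is a direct application of \Cref{theorem:DouglasExistenceQ}'' (Douglas's factorisation theorem, stated in the appendix with the explicit pseudo-inverse form $Q = B^{\dagger}A$). What you have done, in effect, is reprove that theorem in the special case $B = C_V$ self-adjoint: the identity $C_V\widehat{Q} = C_{VU}$, the kernel and range conditions all fall out of the definition of the Moore--Penrose inverse, and boundedness is obtained by a closed-graph argument exploiting the orthogonal decomposition $\cH = \ker C_V \oplus \overline{\ran C_V}$. This is a genuine contribution rather than redundancy: the paper's own remark after \Cref{theorem:DouglasExistenceQ} concedes that the pseudo-inverse characterisation in the non-closed-range case was ``left to the reader'' in the cited literature, so your argument supplies exactly the verification the paper delegates away (your use of self-adjointness to identify $\overline{\ran C_V^{\ast}}$ with $\overline{\ran C_V}$ is the only specialisation). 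Your closing observation that one may instead invoke \Cref{theorem:DouglasExistenceQ} directly, with its uniqueness clause identifying the bounded solution with $C_V^{\dagger}C_{VU}$, is precisely the paper's route. One cosmetic point: rather than citing \Cref{prop:CompatibleImpliesSpecialElement} for uniqueness (which presupposes compatibility, so you need the intermediate step that your bounded solution of $C_V\widehat{Q} = C_{VU}$ yields compatibility via \Cref{lemma:SufficientConditionOblique}), you could prove uniqueness in one line: if $\widehat{Q}_1,\widehat{Q}_2$ both satisfy \eqref{equ:SufficientConditionsUniqueOblique}, then $\ran(\widehat{Q}_1-\widehat{Q}_2) \subseteq \ker C_V \cap \overline{\ran C_V} = \{0\}$.
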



\begin{proof}
	This is a direct application of \Cref{theorem:DouglasExistenceQ}.
\end{proof}


\begin{theorem}
	\label{thm:UniqueObliqueProjectionEqualsPseudoInverse}
	In the setup of \Cref{def:ObliqueProjections}, the following statements are equivalent:
	\begin{enumerate}[label=(\roman*)]
		\item \label{item:UniqueObliqueProjectionEqualsPseudoInverse_1} $(C,\cH)$ is compatible.
		\item \label{item:UniqueObliqueProjectionEqualsPseudoInverse_2} $\ran C_{VU} \subseteq \ran C_V$.
	\end{enumerate}
	If either of these conditions holds, then the unique element $Q_{C,\cH}\in \cP(C,\cH)$ in \Cref{prop:CompatibleImpliesSpecialElement} is given by
	\begin{equation}
	\label{UniqueObliqueEqualsPseudoInverse}
	\widehat{Q}_{C,\cH}
	=
	C_V^\dagger C_{VU}.
	\end{equation}
\end{theorem}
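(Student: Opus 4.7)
The plan is to unpack the definition of compatibility via the block structure \eqref{equ:BlockStructureQ} for one direction, appeal to \Cref{lemma:SufficientConditionOblique} for the reverse direction, and then leverage \Cref{thm:UniqueProjectionPseudoInverse} together with the uniqueness clause of \Cref{prop:CompatibleImpliesSpecialElement} to identify $\widehat{Q}_{C,\cH}$ with $C_V^{\dagger} C_{VU}$.

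For the implication \ref{item:UniqueObliqueProjectionEqualsPseudoInverse_1}$\Rightarrow$\ref{item:UniqueObliqueProjectionEqualsPseudoInverse_2}, I would pick any $Q \in \cP(C,\cH)$ and write it in the form \eqref{equ:BlockStructureQ} forced by $Q^{2}=Q$ and $\ran Q = \cH$. The symmetry condition $CQ = Q^{\ast} C$, written out blockwise, is equivalent to $C_V \widehat{Q} = C_{VU}$ (as observed in the discussion following \Cref{def:ObliqueProjections} and in the proof of \Cref{lemma:SufficientConditionOblique}). Reading off the range, this forces $\ran C_{VU} \subseteq \ran C_V$.

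For the reverse implication \ref{item:UniqueObliqueProjectionEqualsPseudoInverse_2}$\Rightarrow$\ref{item:UniqueObliqueProjectionEqualsPseudoInverse_1}, I would invoke \Cref{thm:UniqueProjectionPseudoInverse} to obtain the bounded operator $\widehat{Q} \defeq C_V^{\dagger} C_{VU}\colon \cG \to \cH$ satisfying $C_V \widehat{Q} = C_{VU}$. \Cref{lemma:SufficientConditionOblique} then immediately lifts this $\widehat{Q}$ to an element of $\cP(C,\cH)$, proving compatibility.

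For the explicit formula \eqref{UniqueObliqueEqualsPseudoInverse}, the key observation is that \Cref{thm:UniqueProjectionPseudoInverse} actually asserts that $\widehat{Q} = C_V^{\dagger} C_{VU}$ satisfies all three defining conditions \eqref{equ:SufficientConditionsUniqueOblique} of $\widehat{Q}_{C,\cH}$. Since \Cref{prop:CompatibleImpliesSpecialElement} guarantees uniqueness of the operator meeting these three conditions, the identification $\widehat{Q}_{C,\cH} = C_V^{\dagger} C_{VU}$ follows at once. Because all the heavy lifting (boundedness under the closed graph theorem, matching of kernel and range properties with the pseudo-inverse) is already packaged into \Cref{theorem:DouglasExistenceQ} and \Cref{thm:UniqueProjectionPseudoInverse}, there is no genuine obstacle here — the proof is essentially a bookkeeping assembly of these auxiliary results, and the only subtle point is making sure to cite the uniqueness in \Cref{prop:CompatibleImpliesSpecialElement} rather than re-deriving it.
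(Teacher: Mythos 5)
Your proposal is correct and follows essentially the same route as the paper: the paper also obtains \ref{item:UniqueObliqueProjectionEqualsPseudoInverse_1}$\Rightarrow$\ref{item:UniqueObliqueProjectionEqualsPseudoInverse_2} from the existence of some $\widehat{Q}$ with $C_V\widehat{Q}=C_{VU}$ (citing \Cref{prop:CompatibleImpliesSpecialElement} where you unpack the block structure directly, a negligible difference), proves the converse via \Cref{thm:UniqueProjectionPseudoInverse} and \Cref{lemma:SufficientConditionOblique}, and deduces \eqref{UniqueObliqueEqualsPseudoInverse} from the uniqueness clause of \Cref{prop:CompatibleImpliesSpecialElement}. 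No gaps.
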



\begin{proof}
	If $(C,\cH)$ is compatible, there exists an element $\widehat{Q}_{C,\cH}\colon \cG\to\cH$ with $C_V\widehat{Q}_{C,\cH} = C_{VU}$ by \Cref{prop:CompatibleImpliesSpecialElement}, which implies \ref{item:UniqueObliqueProjectionEqualsPseudoInverse_2}.
	If $\ran C_{VU} \subseteq \ran C_V$, then \Cref{thm:UniqueProjectionPseudoInverse} and \Cref{lemma:SufficientConditionOblique} imply \ref{item:UniqueObliqueProjectionEqualsPseudoInverse_1}.
	\Cref{thm:UniqueProjectionPseudoInverse} and the uniqueness of $\widehat{Q}_{C,\cH}$ in \Cref{prop:CompatibleImpliesSpecialElement} imply \eqref{UniqueObliqueEqualsPseudoInverse}.
\end{proof}

\begin{remark}
	\Cref{lemma:SufficientConditionOblique} and the equivalence part of \Cref{thm:UniqueObliqueProjectionEqualsPseudoInverse} were already proved by \citet{corach2000oblique};
	we state them for the sake of readability.
	The second part of \Cref{thm:UniqueObliqueProjectionEqualsPseudoInverse}\ref{item:UniqueObliqueProjectionEqualsPseudoInverse_2} characterises the operator $\widehat{Q}_{C,\cH}$ in terms of the Moore--Penrose pseudo-inverse, without an assumption of closed range, as anticipated by \citet[Remark~2.3]{arias2008gi_douglas}.
\end{remark}

There are covariance operators $C$ for which the above conditions do not hold:

\begin{example}
	Let $\cH=\cG$ be any (separable) infinite-dimensional Hilbert space with complete orthonormal basis $(e_j)_{j\in\bN}$.
	Let
	\begin{align*}
		C_U & \defeq \sum_{j\in\bN} j^{-2} e_j\otimes e_j, \\
		C_V & \defeq \sum_{j\in\bN} j^{-4} e_j\otimes e_j, \\
		C_{VU} & = C_{UV} \defeq C_U^{1/2} \Id_{\cH} C_V^{1/2} = \sum_{j\in\bN} j^{-3} e_j\otimes e_j.
	\end{align*}
	By \citet[Theorem~2]{baker1973joint},
	\[
		C \defeq \begin{pmatrix} C_U & C_{UV}\\ C_{VU} & C_V \end{pmatrix}
	\]
	is a legitimate positive definite covariance operator on $\cF = \cG\oplus\cH$.
	However,
	\[
		\ran C_{VU}
		=
		\left\{ \sum_{j\in\bN}\alpha_j e_j \,\middle|\, (j^3\alpha_j)_{j\in\bN}\in \ell^2 \right\}
		\not\subseteq
		\left\{ \sum_{j\in\bN}\alpha_j e_j \,\middle|\, (j^4\alpha_j)_{j\in\bN}\in \ell^2 \right\}
		=
		\ran C_V.
	\]
\end{example}

\section{Connection between CME and Gaussian Conditioning}
\label{section:ConnectionGaussianCME}

If we compare the theories of CMEs and Gaussian conditioning in Hilbert spaces, we make the following observations:
\begin{itemize}
	\item Formula \eqref{equ:CMEmuNEW} for CME and formula \eqref{equ:GaussianConditioningCompatibleCase} for Gaussian conditioning look very similar (in view of \Cref{thm:UniqueObliqueProjectionEqualsPseudoInverse}).
	\item The assumptions under which the conditioning process is ``easy'' --- namely \Cref{assump:weakCME} (as long as \Cref{assump:weakerCMElimit} holds as well) and the compatibility of $(C,\cH)$ --- are equivalent to the conditions that $\ran C_{XY} \subseteq \ran C_{X}$ and $\ran C_{VU} \subseteq \ran C_V$ respectively (\Cref{thm:RelationC_XandC_XY,thm:UniqueObliqueProjectionEqualsPseudoInverse}).
\end{itemize}
This motivates us to connect these two theories by working in the setup of \Cref{section:CMESetup} and introducing new jointly Gaussian random variables $U$ and $V$ that take values in the RKHSs $\cG$ and $\cH$ respectively,
where the means $\mu_U$ and $\mu_V$ and \mbox{(cross-)}\-covariance operators
$C_U, C_{UV}, C_{VU}$, and $C_V$ are chosen to coincide with the kernel mean embeddings $\mu_{Y}$ and $\mu_{X}$ and the kernel \mbox{(cross-)}\-covariance operators $C_{Y}, C_{YX}, C_{XY}$, and $C_{X}$ respectively:
\begin{align}
	\label{equ:ChoiceUV}
	\begin{pmatrix}
	U\\ V
	\end{pmatrix}
	& \sim\cN(\mu,C),
	&
	\mu
	& =
	\begin{pmatrix}
	\mu_U
	\\
	\mu_V
	\end{pmatrix}
	\defeq
	\begin{pmatrix}
	\mu_{Y}
	\\
	\mu_{X}
	\end{pmatrix},
	&
	C
	& =
	\begin{pmatrix}
	C_U & C_{UV}
	\\
	C_{VU} & C_V
	\end{pmatrix}
	\defeq
	\begin{pmatrix}
	C_{Y} & C_{YX}
	\\
	C_{XY} & C_{X}
	\end{pmatrix}.
\end{align}
By \citet[Theorem~1]{baker1973joint} and since \Cref{assumption:CME}\ref{notation:CMErvs} implies that $C$ is a trace-class covariance operator, the Gaussian random variable $(U,V)$ is well defined in $\cG \oplus \cH$.
Note that the random variables $W=(U,V)$ and $Z=(\psi(Y),\varphi(X))$ do not coincide even though they have the same mean and covariance operator, since the latter will not generally be Gaussian.
Surprisingly, their conditional means agree, as long as we condition on $V=v=\varphi(x)$ and $X=x$ respectively.
This is obvious when one compares \eqref{equ:CMEmuNEW} with \eqref{equ:GaussianConditioningCompatibleCase} (and \eqref{equ:CMElimit} with \eqref{equ:GaussianConditioningIncompatibleCase} using \Cref{thm:UniqueProjectionPseudoInverse}).
A natural question is whether a similar equality holds for the conditional covariance operator $C_{Y|X=x}$.
However, the covariance operator $C_{U|V=v}$ obtained from Gaussian conditioning is independent of $v$, a special property of Gaussian measures that cannot be expected of the conditional kernel covariance operator $C_{Y|X=x}$.
Instead, $C_{U|V=v}$ equals the mean of $C_{Y|X=x}$ when averaged over all possible outcomes $x\in\cX$.\footnote{This observation has already been made by \citet[Proposition 5]{fukumizu2004dimensionality} under stronger assumptions and by \citet[Proposition~3]{fukumizu2009kernel} in a weaker form.}
These insights are summarised in the following proposition and illustrated in \Cref{fig:CommutativeDiagramCMEGaussian}.

\begin{figure}[t]
	\centering
	\adjustbox{scale=0.9}{
		\begin{tikzcd}[column sep=3em,row sep=7em]
			\cX,\cY
			&
			\cH,\cG
			&
			\text{Gaussian on } \cG\oplus \cH
			\\[-2.3cm]
			\begin{cases}
				\begin{rcases}
					\qquad\ x\in\cX \\
					\qquad X\sim\bP_X \\
					\qquad \, Y\sim\bP_Y \\
					(X,Y)\sim\bP_{XY}
				\end{rcases}
			\end{cases}
			\arrow{r}[sloped,above]{\text{embed}}[sloped,below]{\psi,\varphi}
			\arrow{d}[sloped,above]{\text{conditioning on}}[sloped,below]{X=x}
			&
			\begin{cases}
				\begin{rcases}
					\varphi(x) \\
					\textcolor{blue}{\psi(Y), \varphi(X)} \\
					\mu_{Y}, C_{Y}, C_{YX} \\
					\mu_{X}, C_{XY}, C_{X} \\
				\end{rcases}
			\end{cases}
			\arrow[leftrightarrow]{r}
			\arrow{d}[sloped,above]{\text{conditional mean}}[sloped,below]{\text{embedding}}
			&
			\begin{pmatrix} \textcolor{blue}{U}\\ \textcolor{blue}{V}	\end{pmatrix}
			\sim \cN \left(
			\begin{pmatrix} \mu_{Y} \\ \mu_{X}	\end{pmatrix},
			\begin{pmatrix} \ C_{Y} \hspace{0.2cm} C_{YX}\\ C_{XY} \hspace{0.2cm} C_{X}\ 	\end{pmatrix}
			\right)
			\arrow{d}[sloped,above]{\text{conditioning on}}[sloped,below]{V=v = \varphi(x)}
			\\
			(Y|X=x) \sim \bP_{Y|X=x}
			\arrow{r}[sloped,above]{\text{embed}}[sloped,below]{\psi,\varphi}
			&
			\mu_{Y|X=x}, C_{Y|X=x}
			\arrow[leftrightarrow]{r}
			&
			(U|V=v)\sim \cN(\mu_{U|V = v}, C_{U|V = v})
		\end{tikzcd}
	}
	\caption{A normally-distributed $\cG\oplus\cH$-valued normal random variable $(U,V)$ can be defined with the same mean and covariance structure as $(\psi(Y),\varphi(X))$.
	While the latter will typically fail to be normally distributed, surprisingly, the conditional means of the two random variables happen to agree!
	Since $C_{U|V = v}$ does not depend on the realisation $v$, a specific property of Gaussian random variables that cannot be expected from $C_{Y|X=x}$, a similar agreement for the conditional covariance operators cannot be obtained.
	Instead, the identity provided by \Cref{thm:ConnectionToGaussianRV} holds, which is open to interpretation.
	}
	\label{fig:CommutativeDiagramCMEGaussian}
\end{figure}
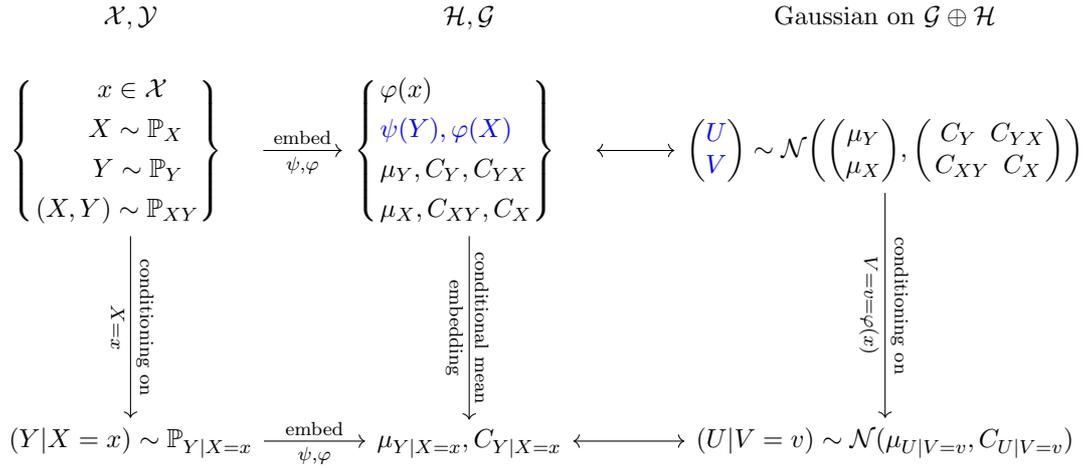

Note that the distributions of $\varphi(X)$ and $V$ might have different (and even disjoint!) supports, and so one must be particularly careful with ``almost every'' statements in this context.


\begin{theorem}
	\label{thm:ConnectionToGaussianRV}			
	Let \Cref{assumption:CME} and \Cref{assump:weakerCMElimit} hold, $(U,V)$ be the random variable defined by \eqref{equ:ChoiceUV} and let $\bP_{\varphi(X)}$ and $\bP_{V}$ denote the probability distributions of $\varphi(X)$ and $V$, respectively.
	Then, for $\bP_{V}$-a.e.\ $v\in\cH$,
	\[	
		C_{U|V=v}
		=
		\bE[C_{Y|X}]
		=
		\int_{\cX} C_{Y|X=x}\, \rd \bP_X(x).
	\]
	Further, there exist $N_1,N_2\subseteq \Omega$ with $\bP_{\varphi(X)}(N_1) = 0$ and $\bP_{V}(N_2) = 0$, such that, for every $v = \varphi(x)\notin N_1\cup N_2$, $\mu_{U|V=v} = \mu_{Y|X = x}$.	
\end{theorem}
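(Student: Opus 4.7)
The proof naturally splits into the conditional-mean identity and the conditional-covariance identity. Both rely on combining the Gaussian conditioning of \Cref{thm:MainTheoremOwhadiScovel} with the CME approximation result \Cref{thm:CMEproperlyLimit}, together with the explicit identification
$\widehat{Q}_{C^{(n)},\cH} = C_{X}^{(n)\dagger} C_{XY}^{(n)}$ provided by \Cref{thm:UniqueObliqueProjectionEqualsPseudoInverse} (valid because each finite-rank $(C^{(n)},\cH)$ is compatible, as already noted in the proof of \Cref{thm:CMEproperlyLimit}). The central observation is that exactly the same finite-rank operators govern both the Gaussian conditional mean $\mu_{U|V=v}$ and the CME approximations $\mu^{(n)}(x,\cdot)$.

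\textbf{Mean identity.} \Cref{thm:MainTheoremOwhadiScovel} yields a $\bP_{V}$-null set $N_{2} \subseteq \cH$ such that, for all $v \notin N_{2}$,
\[
	\mu_{U|V=v} = \mu_Y + \lim_{n\to\infty} (C_X^{(n)\dagger} C_{XY}^{(n)})^{\ast}(v - \mu_X).
\]
Let $B \subseteq \cH$ be the (measurable) set on which this limit exists, and define $\tilde\mu(v)$ as the limit on $B$ and $0$ elsewhere. Since
$\mu^{(n)}(x,\cdot) = \mu_Y + (C_X^{(n)\dagger} C_{XY}^{(n)})^{\ast}(\varphi(x) - \mu_X)$ converges in $\cG$ to $\mu_{Y|X=x}$ for $\bP_{X}$-a.e.\ $x$ by \Cref{thm:CMEproperlyLimit}, we have $\varphi(x) \in B$ for $\bP_{X}$-a.e.\ $x$; hence $N_{1} \defeq \cH \setminus B$ satisfies $\bP_{\varphi(X)}(N_{1}) = 0$. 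We further choose the regular version $x \mapsto \mu_{Y|X=x}$ so that $\mu_{Y|X=x} = \tilde\mu(\varphi(x))$ on $\varphi^{-1}(B)$; this is legitimate since it modifies the original version only on a $\bP_{X}$-null set. Then for any $v = \varphi(x) \notin N_{1} \cup N_{2}$, both $\mu_{U|V=v}$ and $\mu_{Y|X=x}$ equal $\tilde\mu(v)$.

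\textbf{Covariance identity.} \Cref{thm:MainTheoremOwhadiScovel} further gives, for $\bP_{V}$-a.e.\ $v \in \cH$,
\[
	C_{U|V=v} = C_{Y} - \lim_{n\to\infty} C_{YX}\, C_{X}^{(n)\dagger} C_{XY}^{(n)} \quad \text{(trace norm),}
\]
an operator independent of $v$ as is characteristic of the Gaussian case. The law of total covariance for Bochner-integrable Hilbert-valued random variables gives $C_{Y} = \bE[C_{Y|X}] + \Cov[\mu_{Y|X}]$, so it suffices to show that the trace-norm limit equals $\Cov[\mu_{Y|X}]$. For $g,g' \in \cG$, writing $h_g^{(n)} \defeq C_{X}^{(n)\dagger} C_{XY}^{(n)} g$ and using $\innerprod{g'}{C_{YX} h}_{\cG} = \Cov[h(X), g'(Y)] = \Cov[h(X), f_{g'}(X)]$ (law of total covariance with the reproducing property),
\[
	\biginnerprod{g'}{C_{YX}\, C_{X}^{(n)\dagger} C_{XY}^{(n)} g}_{\cG}
	= \biginnerprod{g'}{C_{YX} h_{g}^{(n)}}_{\cG}
	= \Cov\bigl[f_{g'}(X),\, h_{g}^{(n)}(X)\bigr].
\]
By \Cref{thm:CMEproperlyLimit}, $[h_{g}^{(n)}] \to [f_{g}]$ in $L^{2}_{\cC}(\bP_{X})$, so by Cauchy--Schwarz the right-hand side converges to $\Cov[f_{g'}(X), f_{g}(X)] = \innerprod{g'}{\Cov[\mu_{Y|X}] g}_{\cG}$. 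This identifies the weak-operator limit, and hence, together with the already-established trace-norm convergence, the trace-norm limit as $\Cov[\mu_{Y|X}]$. Consequently $C_{U|V=v} = \bE[C_{Y|X}]$ for $\bP_{V}$-a.e.\ $v$, which by the Bochner-integral definition of the expectation equals $\int_{\cX} C_{Y|X=x}\,\rd\bP_{X}(x)$.

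\textbf{Expected main obstacle.} The principal subtlety is the interplay of the two distinct exceptional sets --- $\bP_{X}$-a.e.\ convergence of $\mu^{(n)}(x,\cdot)$ on one hand, $\bP_{V}$-a.e.\ validity of the Gaussian formulae on the other --- when restricted to realisations of the form $v = \varphi(x)$, since in general $\varphi$ is not injective and the pushforward of a $\bP_{X}$-null set by $\varphi$ need not be $\bP_{\varphi(X)}$-null. The plan handles this by defining $\tilde\mu$ intrinsically on $\cH$ and selecting a regular version of $\bP_{Y|X=x}$ that factors through $\varphi$ on $B$, a modification on a $\bP_{X}$-null set permitted by the almost-sure non-uniqueness of regular conditional distributions recalled in \Cref{remark:AllResultsOnlyP_Xae}.
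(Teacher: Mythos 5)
Your proposal is correct and follows essentially the same route as the paper: the mean identity by combining \Cref{thm:CMEproperlyLimit}, \Cref{thm:MainTheoremOwhadiScovel}, and the pseudo-inverse identification \eqref{UniqueObliqueEqualsPseudoInverse}, and the covariance identity via the law of total covariance together with the $L_{\cC}^{2}$-convergence $[h_{g}^{(n)}]\to[f_{g}]$ tested against elements of $\cG$ (your explicit null-set bookkeeping for the mean identity is in fact more careful than the paper's one-line citation). The only point you gloss over is the well-definedness of $\bE[C_{Y|X}]$ as a Bochner integral, which the paper secures via \Cref{lemma:C_YgXwelldefined}.
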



\begin{proof}
	By \Cref{lemma:C_YgXwelldefined}, $\bE[C_{Y|X}]$ is well defined.
	The identity $\mu_{U|V=v}=\mu_{Y|X = x}$ for the means follows directly from \Cref{thm:CMEproperlyLimit,thm:MainTheoremOwhadiScovel,thm:UniqueProjectionPseudoInverse}.
	For the covariance identity, using the notation of \Cref{thm:CMEproperlyLimit}, note that $\norm{ [h_{\psi(y)}^{(n)}] - [f_{\psi(y)}] }_{ L_{\cC}^{2}}\xrightarrow[n\to\infty]{}	0$ by \eqref{equ:ConvergenceL2_C}.
	Therefore, for $y,y'\in\cY$, $g = \psi(y)$, and $g'=\psi(y')$,
	\begin{align*}
		\allowdisplaybreaks
		\Cov\left[f_{g}(X),f_{g'}(X)\right]
		&=
		\lim_{n\to\infty} \Cov\left[h_{g}^{(n)}(X), h_{g'}^{(n)}(X)\right] \\
		&=
		\lim_{n\to\infty} \innerprod{ C_{X} h_{g}^{(n)} }{ h_{g'}^{(n)} }_{\cH} \\
		&=
		\lim_{n\to\infty} \innerprod{ C_{XY}^{(n)} g }{ C_{X}^{(n)\dagger} C_{XY}^{(n)} g' }_{\cH} \\
		&=
		\lim_{n\to\infty} \innerprod{ g }{ C_{YX}^{(n)}C_{X}^{(n)\dagger}C_{XY}^{(n)} g' }_{\cG} \\
		&=
		\lim_{n\to\infty} \innerprod{ g }{ C_{UV}C_V^{(n)\dagger}C_{VU}^{(n)} g' }_{\cG} .
	\end{align*}
	By the law of total covariance and \eqref{equ:GaussianConditioningIncompatibleCase}, \eqref{UniqueObliqueEqualsPseudoInverse} this implies that, for $g = \psi(y)$ and $g'=\psi(y')$,
	\begin{align*}
		\allowdisplaybreaks
		\innerprod{ g }{ \bE[C_{Y|X}] g' }_{\cG}
		&=
		\bE\big[\Cov[g(Y),g'(Y)|X]\big] \\
		&=
		\Cov[g(Y),g'(Y)] - \Cov\big[f_g(X), f_{g'}(X)\big] \\
		&=
		\innerprod{ g }{ C_{U} g' }_{\cG} - \lim_{n\to\infty} \innerprod{ g }{ C_{UV} C_V^{(n)\dagger} C_{VU}^{(n)} g' }_{\cG} \\
		&=
		\innerprod{ g }{ C_{U|V=v}\, g' }_{\cG}
	\end{align*}
	for $\bP_{V}$-a.e.\ $v\in\cH$.
	Since $\spn\{\psi(y)\mid y\in\cY\}$ is dense in $\cG$, this finishes the proof.
\end{proof}

\begin{remark}
	\Cref{thm:ConnectionToGaussianRV} implies in particular that the posterior mean $\mu_{U|V=v}$ of the $U$-component of a jointly Gaussian random variable $(U,V)$ in an RKHS $\cG \oplus \cH$ is not just some element in $\cG$, but in fact the KME of some probability distribution on $\cY$, as long as we condition on an event of the form $V=v=\varphi(x)$ outside the null events $N_{1}$ and $N_{2}$.
	Note, though, that these null sets could be geometrically quite large.
\end{remark}

As mentioned above, there is another analogy between CMEs and Gaussian conditioning, namely the assumption under which the formula for the conditional mean is particularly nice, i.e.\ does not require finite-rank approximations of the \mbox{(cross-)}\-covariance operators:

\begin{theorem}
	\label{thm:EquivalenceAssumptionCAndCompatibility}
	Under \Cref{assumption:CME} and with the random variable $(U,V)$ defined by \eqref{equ:ChoiceUV}, \Cref{assump:weakCME} is equivalent to the compatibility of $(C,\cH)$.
\end{theorem}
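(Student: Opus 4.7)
The plan is to show that both \Cref{assump:weakCME} and the compatibility of $(C,\cH)$ reduce, via results already in the paper, to the same range-inclusion condition on the kernel (cross-)covariance operators, and then invoke the identification \eqref{equ:ChoiceUV}.

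First I would unpack the definitions. On the CME side, \Cref{thm:RelationC_XandC_XY}\ref{statement1lemma:RelationC_XandC_XY}$\Leftrightarrow$\ref{statement3lemma:RelationC_XandC_XY} gives that \Cref{assump:weakCME} is equivalent to
\[
\ran C_{XY} \subseteq \ran C_{X}.
\]
On the Gaussian side, since the random variable $(U,V)$ in \eqref{equ:ChoiceUV} is defined precisely so that $C_U = C_Y$, $C_{UV} = C_{YX}$, $C_{VU} = C_{XY}$, and $C_V = C_X$, \Cref{thm:UniqueObliqueProjectionEqualsPseudoInverse}\ref{item:UniqueObliqueProjectionEqualsPseudoInverse_1}$\Leftrightarrow$\ref{item:UniqueObliqueProjectionEqualsPseudoInverse_2} gives that compatibility of $(C,\cH)$ is equivalent to
\[
\ran C_{VU} \subseteq \ran C_{V}, \quad\text{i.e.,}\quad \ran C_{XY} \subseteq \ran C_{X}.
\]
Chaining these two equivalences yields the claim.

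Thus the proof is essentially a one-line diagram chase once the bookkeeping of the identification in \eqref{equ:ChoiceUV} is clearly laid out. The only subtlety worth a sentence is that \Cref{thm:UniqueObliqueProjectionEqualsPseudoInverse} is stated for an abstract positive, trace-class covariance operator on $\cG\oplus\cH$, and it must be observed that the block operator $C$ in \eqref{equ:MeanAndCovarianceBlockStructure} satisfies these hypotheses (it is the covariance of a Bochner-square-integrable $\cF$-valued random variable, hence self-adjoint, positive, and trace-class by Sazonov's theorem, exactly as used in \Cref{assumption:CME}\ref{notation:CMEmuC}). The existence of the Gaussian $(U,V)$ with this covariance was already invoked just before the theorem via \citet[Theorem~1]{baker1973joint}, so no further verification is needed.

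No genuine obstacle arises: the theorem is purely a rephrasing made possible by \Cref{thm:RelationC_XandC_XY,thm:UniqueObliqueProjectionEqualsPseudoInverse} applied to the same pair of operators. The value of the statement is conceptual rather than technical, making explicit the parallel advertised at the start of \Cref{section:ConnectionGaussianCME}: the ``easy case'' in the CME theory is exactly the ``compatible case'' in the Owhadi--Scovel theory of Gaussian conditioning.
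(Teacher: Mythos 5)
Your proposal is correct and follows exactly the paper's own argument: both the paper and you reduce \Cref{assump:weakCME} and compatibility of $(C,\cH)$ to the common condition $\ran C_{XY} \subseteq \ran C_{X}$ via \Cref{thm:RelationC_XandC_XY} and \Cref{thm:UniqueObliqueProjectionEqualsPseudoInverse}, respectively. Your additional remarks on verifying the hypotheses of \Cref{thm:UniqueObliqueProjectionEqualsPseudoInverse} are sound but not needed beyond what the paper already establishes in \Cref{assumption:CME}\ref{notation:CMEmuC}.
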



\begin{proof}
	By \Cref{thm:RelationC_XandC_XY,thm:UniqueObliqueProjectionEqualsPseudoInverse}, both conditions are equivalent to $\ran C_{XY} \subseteq \ran C_{X}$.
\end{proof}

\section{Closing Remarks}
\label{section:Closing}

This article has demonstrated rigorous foundations for the method of conditional mean embedding in reproducing kernel Hilbert spaces.
Mild and verifiable sufficient conditions have been provided for the centred and uncentred variants of the CME formula to yield an element $\mu_{Y|X=x}$ that is indeed the kernel mean embedding of the conditional distribution $\bP_{Y|X=x}$ on $\cY$.
The CME formula required a correction in the centred case but, modulo this correction, it is more generally applicable than its uncentred counterpart and provides stronger statements:
\Cref{thm:CMEproperlyLimit} proves convergence in $L^2(\bP;\cG)$ as well as $\bP_{X}$-almost everywhere convergence, while its analogue \Cref{thm:CMEproperlyUncenteredLimit} yields only convergence in $L^2(\bP;\cG)$.
The reason is that $(\uu{\mu}^{(n)}(X,\quark))_{n\in\bN}$ defined by \eqref{equ:DefinitionU_Mu_n}, in contrast to $(\mu^{(n)}(X,\quark))_{n\in\bN}$, may fail to be a martingale (cf.\ \Cref{lemma:MartingaleConsequences}) and we cannot apply \citet[Theorem V.2.8]{diestel1977}.
Therefore, we advocate for the centred version of the CME formula as the preferred formulation in practice.
We have also demonstrated the precise relationship between CMEs and well-established formulae for the conditioning of Gaussian random variables in Hilbert spaces.

Some natural directions for further research suggest themselves:


First, in practice, the KMEs and kernel \mbox{(cross-)}\-covariance operators will often be estimated using sampled data, and so empirical versions of the CME, along with convergence guarantees, are of great practical importance.
Various empirical CMEs have already been considered and applied in the literature \citep{fukumizu2015nonparametric,fukumizu2013kernel,gruenewaelder2012conditional,park2020measure}, but their approximation accuracy is not at all trivial to analyse, conditions for validity along the lines of our Assumptions \ref{assump:strongCME}--\ref{assump:weakCMEuncentred} are not yet known, and a detailed treatment would be too long to consider in this work, which has deliberately focused on the population CME.
\Cref{section:empirical} gives an overview of the technical obstacles that must be overcome in the empirical setting, existing results in the area, and work yet to do.

Second, when using CMEs for inference, a remaining step might be to undo the kernel mean embedding, i.e.\ to recover the conditional distribution $\bP_{Y|X=x}$ on $\cY$ from its embedding $\mu_{Y|X=x} \in \cG$, or its density with respect to a reference measure on $\cY$.
This is a particular instance of a non-parametric inverse problem and a principled solution, based upon Tikhonov regularisation, has been proposed in the context of the \emph{kernel conditional density operator} (KCDO) by \citet{schuster2019kcdo}.
The relationship between this KCDO approach and the sufficient conditions for CME that have been considered in this article remains to be precisely formulated;
given the intimate relationship between Tikhonov regularisation and the Moore--Penrose pseudo-inverse, this should be a fruitful avenue of research.

\section*{Acknowledgements}
\addcontentsline{toc}{section}{Acknowledgements}

The research presented here was supported in part by the German Research Foundation (Deut\-sche For\-schungs\-ge\-mein\-schaft) through project TrU-2 ``Demand modelling and control for e-commerce using RKHS transfer operator approaches'' of the Excellence Cluster ``MATH+ The Berlin Mathematics Research Centre'' (EXC-2046/1, project ID: 390685689).
The authors also wish to thank S.~Klus, H.~C.~Lie, M.~Mollenhauer, and B.~Sprungk for helpful and collegial discussions.

\appendix


\section{Technical Results}
\label{section:TechnicalResults}

This section contains several technical results used in the proofs of the theorems given in the article.
The following well-known result due to \citet[Theorem~1]{douglas1966majorization} (see also \citet[Theorem~2.1]{fillmore1971operator}) is used several times:

\begin{theorem}
	\label{theorem:DouglasExistenceQ}
	Let $\cH$, $\cH_1$ and $\cH_2$ be Hilbert spaces and let $A\colon \cH_1\to \cH$ and $B\colon \cH_2\to \cH$ be bounded linear operators with $\ran A\subseteq \ran B$.
	Then $Q \defeq B^\dagger A\colon \cH_1 \to \cH_2$ is a well-defined bounded linear operator, where $B^\dagger$ denotes the Moore--Penrose pseudo-inverse of $B$.
	It is the unique operator that satisfies the conditions
	\begin{equation}
		A = BQ,
		\qquad
		\ker Q = \ker A,
		\qquad
		\ran Q \subseteq \overline{\ran B^{\ast}}.
	\end{equation}
\end{theorem}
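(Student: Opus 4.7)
The plan is to verify well-definedness and boundedness via the closed graph theorem, then check the three listed properties together with uniqueness.

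First I would observe that $\dom B^{\dagger} = \ran B \oplus (\ran B)^{\perp}$ by definition, so the hypothesis $\ran A \subseteq \ran B$ ensures $\ran A \subseteq \dom B^{\dagger}$ and hence $Q \defeq B^{\dagger} A \colon \cH_{1} \to \cH_{2}$ is defined everywhere on $\cH_{1}$ as a composition of linear maps. By the defining property of the Moore--Penrose pseudo-inverse, $\ran B^{\dagger} \subseteq (\ker B)^{\perp} = \overline{\ran B^{\ast}}$, which immediately yields the range condition $\ran Q \subseteq \overline{\ran B^{\ast}}$. Moreover, restricting $B$ to $(\ker B)^{\perp}$ gives an injection onto $\ran B$, and $B^{\dagger}$ is by construction its inverse on $\ran B$; therefore $B B^{\dagger} y = y$ for every $y \in \ran B$, so in particular $B Q x = B B^{\dagger} A x = A x$ for all $x \in \cH_{1}$, giving $A = BQ$.

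Next, to establish boundedness, I would apply the closed graph theorem. Suppose $(x_{n})_{n \in \bN} \subseteq \cH_{1}$ with $x_{n} \to x$ in $\cH_{1}$ and $Q x_{n} \to z$ in $\cH_{2}$. By continuity of $A$ and $B$, $A x_{n} \to A x$ in $\cH$ and $B Q x_{n} \to B z$ in $\cH$. Since $B Q x_{n} = A x_{n}$, we get $B z = A x$. Also $Q x_{n} \in (\ker B)^{\perp}$ for every $n$, and $(\ker B)^{\perp}$ is closed, so $z \in (\ker B)^{\perp}$. But $Q x = B^{\dagger} A x$ is the unique element of $(\ker B)^{\perp}$ mapped by $B$ to $A x$; hence $z = Q x$. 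Thus $Q$ has closed graph and is therefore bounded.

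For the kernel identity, note that $Q x = 0$ trivially implies $A x = B Q x = 0$; conversely, if $A x = 0$, then $Q x \in (\ker B)^{\perp}$ and $B Q x = 0$, so $Q x \in \ker B \cap (\ker B)^{\perp} = \{ 0 \}$. Hence $\ker Q = \ker A$.

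Finally, for uniqueness, suppose $Q' \colon \cH_{1} \to \cH_{2}$ also satisfies $A = B Q'$ and $\ran Q' \subseteq \overline{\ran B^{\ast}} = (\ker B)^{\perp}$. Then $B(Q - Q') = 0$, so $\ran(Q - Q') \subseteq \ker B$, while simultaneously $\ran(Q - Q') \subseteq (\ker B)^{\perp}$. Hence $Q - Q' = 0$. The main subtlety in the whole argument is the closed graph step, where care must be taken that the limit $z$ lies in $(\ker B)^{\perp}$ so that the uniqueness clause for $B^{\dagger}$ can be invoked; everything else is essentially bookkeeping around the decomposition $\dom B^{\dagger} = \ran B \oplus (\ran B)^{\perp}$.
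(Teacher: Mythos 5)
Your proof is correct. Note, however, that the paper does not actually prove this statement: it presents it as a known result, citing \citet[Theorem~1]{douglas1966majorization} and \citet[Theorem~2.1]{fillmore1971operator}, and remarks that the identification of Douglas's factor $Q$ with $B^{\dagger}A$ was observed by \citet[Corollary~2.2, Remark~2.3]{arias2008gi_douglas} only for closed-range operators, ``leaving the proof of the general case to the reader.'' Your argument supplies exactly that missing general-case proof, and it is in substance Douglas's original one recast through the Moore--Penrose pseudo-inverse: Douglas defines $Qx$ directly as the unique element of $(\ker B)^{\perp}$ with $BQx = Ax$ and invokes the closed graph theorem, whereas you obtain the same element as $B^{\dagger}Ax$ from the decomposition $\dom B^{\dagger} = \ran B \oplus (\ran B)^{\perp}$ and then run the identical closed-graph argument. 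All the individual steps check out: $\ran A \subseteq \ran B \subseteq \dom B^{\dagger}$ gives well-definedness; $\ran B^{\dagger} \subseteq (\ker B)^{\perp} = \overline{\ran B^{\ast}}$ gives the range condition; $BB^{\dagger}|_{\ran B} = \Id$ gives $A = BQ$; the limit $z$ in the closed-graph step does lie in the closed subspace $(\ker B)^{\perp}$, so injectivity of $B$ there forces $z = Qx$; and your uniqueness argument correctly uses only the factorisation and the range condition, which yields a slightly stronger uniqueness statement than the theorem asks for. The one cosmetic simplification available is in the kernel identity: $Ax = 0$ gives $Qx = B^{\dagger}0 = 0$ directly, without passing through $\ker B \cap (\ker B)^{\perp}$.
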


\begin{remark}
	In the original work of \citet{douglas1966majorization} only the existence of a bounded operator $Q$ such that $A = BQ$ was shown.
	However, the construction of $Q$ in the proof is identical to that of $B^\dagger$ (multiplied by $A$).	
	This connection has been observed before by \citet[Corollary~2.2 and Remark~2.3]{arias2008gi_douglas}, where it was proven in the case of closed range operators, leaving the proof of the general case to the reader.
\end{remark}

The following result partially generalises \cite[Proposition 4.1]{devito2006discretization}:

\begin{lemma}
	\label{lemma:Carleman_trick}%
	Let $\cH$ be a separable Hilbert space, let $\cG$ be an RKHS over $\cY$ with canonical feature map $\psi$, and suppose that $\cG$ is a subset of $\cL^{2}(\nu)$, where $\nu$ is a $\sigma$-finite measure on $\cY$.
	Then any bounded linear operator $A \colon \cH \to \cG$ is Hilbert--Schmidt as an operator $A \colon \cH \to L^{2}(\nu)$.
\end{lemma}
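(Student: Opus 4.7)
The natural strategy is a direct computation of the Hilbert--Schmidt norm of $A$, viewed as an operator into $L^{2}(\nu)$, using the reproducing property of $\cG$ to reduce everything to a pointwise calculation. Fix a complete orthonormal system $(e_{n})_{n \in \bN}$ of the separable Hilbert space $\cH$; the operator $A \colon \cH \to L^{2}(\nu)$ is Hilbert--Schmidt precisely when
\[
\|A\|_{\mathrm{HS}}^{2} \defeq \sum_{n \in \bN} \|A e_{n}\|_{L^{2}(\nu)}^{2} < \infty .
\]
Since each $A e_{n}$ is an element of $\cG$, it is a genuine function on $\cY$ (not merely an equivalence class), so its pointwise values are well-defined and used to compute its $L^{2}(\nu)$-norm.

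The first key step is to use the reproducing property of $\cG$ to rewrite, for every $y \in \cY$,
\[
(A e_{n})(y) = \innerprod{ A e_{n} }{ \psi(y) }_{\cG} = \innerprod{ e_{n} }{ A^{\ast} \psi(y) }_{\cH},
\]
converting the pointwise evaluation of $A e_{n}$ into a pairing in $\cH$. Because all integrands are non-negative and $\nu$ is $\sigma$-finite, Tonelli's theorem justifies exchanging sum and integral, giving
\[
\sum_{n \in \bN} \|A e_{n}\|_{L^{2}(\nu)}^{2}
= \int_{\cY} \sum_{n \in \bN} \bigabsval{ \innerprod{ e_{n} }{ A^{\ast} \psi(y) }_{\cH} }^{2} \, \rd \nu(y)
= \int_{\cY} \|A^{\ast} \psi(y)\|_{\cH}^{2} \, \rd \nu(y),
\]
where the second equality is Parseval's identity in $\cH$. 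Measurability of the integrand follows from measurability of $\psi$ (itself obtained as in \Cref{assumption:CME}\ref{notation:CMEfeaturemaps}) and boundedness of $A^{\ast}$.

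The final step is to majorise using boundedness of $A$ and the RKHS identity $\|\psi(y)\|_{\cG}^{2} = \ell(y,y)$:
\[
\|A\|_{\mathrm{HS}}^{2} \leq \|A\|_{\mathrm{op}}^{2} \int_{\cY} \ell(y,y) \, \rd \nu(y) ,
\]
from which Hilbert--Schmidt-ness follows as soon as the right-hand integral is finite. I expect the main obstacle to lie precisely here: the statement hypothesises only that $\cG \subseteq \cL^{2}(\nu)$, which by the closed-graph theorem (applied using the fact that $\cG$-convergence forces pointwise, hence a.e., convergence) merely gives a bounded inclusion $\cG \hookrightarrow L^{2}(\nu)$, not the classical Carleman condition $\int \ell(y,y) \, \rd \nu < \infty$ needed for the inclusion itself to be Hilbert--Schmidt. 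In the settings in which the paper invokes the lemma (with $\nu = \bP_{X}$ a probability measure for which \eqref{equ:FiniteSecondMomentOfUandV} guarantees $\bE[k(X,X)] < \infty$), this integrability is automatic; more generally the conclusion should be read as the composition of $A$ with the Carleman-type inclusion $\cG \hookrightarrow L^{2}(\nu)$ being Hilbert--Schmidt whenever that inclusion is itself Hilbert--Schmidt, which is the substantive content of the argument above.
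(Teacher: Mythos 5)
Your computation follows essentially the same route as the paper's proof: both arguments hinge on the reproducing-property identity $(Ah)(y) = \innerprod{ A^{\ast}\psi(y) }{ h }_{\cH}$, which exhibits $A$ as a Carleman operator with kernel $y \mapsto A^{\ast}\psi(y)$. The paper stops there and cites \citet[Theorem~6.15]{weidmann1980}; you instead unpack that citation by computing $\sum_{n} \norm{ A e_{n} }_{L^{2}(\nu)}^{2} = \int_{\cY} \norm{ A^{\ast}\psi(y) }_{\cH}^{2} \, \rd\nu(y)$ via Tonelli and Parseval, which is exactly the content of the cited result. That part of your argument is correct (for measurability of the integrand it is perhaps cleanest to note that it equals $\sum_{n} \absval{ (Ae_{n})(y) }^{2}$, a countable sum of measurable functions since each $Ae_{n} \in \cG \subseteq \cL^{2}(\nu)$).

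The obstacle you flag at the end is genuine, and it is a gap in the lemma as stated rather than in your proof: the hypothesis $\cG \subseteq \cL^{2}(\nu)$ gives only a bounded inclusion $\cG \hookrightarrow L^{2}(\nu)$ and does not force $\int_{\cY} \norm{ A^{\ast}\psi(y) }_{\cH}^{2} \, \rd\nu(y) < \infty$. A concrete counterexample: take $\cY = \bN$ with $\nu$ the counting measure (which is $\sigma$-finite), $\cG = \cH = \ell^{2}$ viewed as the RKHS on $\bN$ with kernel $\ell(m,n) = \delta_{mn}$, and $A = \Id$; then $A$ is bounded, $\cG \subseteq \cL^{2}(\nu)$, yet $A \colon \cH \to L^{2}(\nu)$ is the identity on $\ell^{2}$ and is not even compact. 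So the lemma requires the additional hypothesis $\int_{\cY} \ell(y,y) \, \rd\nu(y) < \infty$ (equivalently, that the inclusion $\cG \hookrightarrow L^{2}(\nu)$ is itself Hilbert--Schmidt), exactly as your final estimate $\norm{ A }_{\mathrm{HS}}^{2} \leq \norm{ A }_{\mathrm{op}}^{2} \int_{\cY} \ell(y,y) \, \rd\nu(y)$ indicates. You are also right that this costs nothing where the lemma is actually invoked: there $\nu$ is $\bP_{X}$ or $\bP_{Y}$ and \eqref{equ:FiniteSecondMomentOfUandV} is precisely the statement that $\int k(x,x) \, \rd\bP_{X}(x)$ and $\int \ell(y,y) \, \rd\bP_{Y}(y)$ are finite, so the footnoted Hilbert--Schmidt claims in the paper survive with the corrected hypothesis.
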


\begin{proof}
	Let $h \in \cH$ and $y \in \cY$.
	Then $(A h) (y) = \innerprod{ \psi(y) }{ A h }_{\cG} = \innerprod{ A^{\ast} \psi (y) }{ h }_{\cH}$.
	Thus $A$ is a Carleman operator and the claim follows from \citet[Theorem~6.15]{weidmann1980}.
\end{proof}

The following results are used in the proofs of \Cref{section:AssumptionsForCMEs,section:TheoryCentred,section:ConnectionGaussianCME}.
Note that \Cref{lemma:EquivalenceCharacteristicDense} is essentially one direction of Proposition~5 in \citet{fukumizu2009kernel}, but does not require $k$ to be bounded, which makes a separate proof necessary.

\begin{lemma}
	\label{lemma:EquivalenceCharacteristicDense}
	Under \Cref{assumption:CME}, if $k$ is a characteristic kernel, then $\cH_{\cC}$ is dense in $ L_{\cC}^{2}(\bP_{X})$.
\end{lemma}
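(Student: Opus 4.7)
The plan is to argue by contradiction: assume that $\cH_{\cC}$ is not dense in $L_{\cC}^{2}(\bP_{X})$ and construct from the orthogonal complement two distinct probability measures with the same kernel mean embedding, contradicting the assumption that $k$ is characteristic.

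First, suppose there exists $[f] \in L_{\cC}^{2}(\bP_{X}) \setminus \{0\}$ with $\innerprod{[f]}{[h]}_{L_{\cC}^{2}} = \Cov[f(X), h(X)] = 0$ for every $h \in \cH$. Replacing $f$ by $f - \bE[f(X)]$, I may assume that $\bE[f(X)] = 0$ while $f$ is not $\bP_{X}$-a.e.\ zero, and then the orthogonality condition simplifies to $\bE[f(X) h(X)] = 0$ for all $h \in \cH$. By \Cref{assumption:CME}\ref{notation:CMErvs} and Cauchy--Schwarz, the random variable $f(X) \varphi(X)$ is Bochner integrable in $\cH$, so one may form $\bE[f(X)\varphi(X)] \in \cH$. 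Testing against arbitrary $h \in \cH$ via the reproducing property gives $\innerprod{h}{\bE[f(X)\varphi(X)]}_{\cH} = \bE[f(X) h(X)] = 0$, hence $\bE[f(X)\varphi(X)] = 0$ in $\cH$.

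Next, decompose $f = f^{+} - f^{-}$ into its positive and negative parts. Since $\bE[f(X)] = 0$ and $f \not\equiv 0$ $\bP_{X}$-a.e., the common value $M \defeq \bE[f^{+}(X)] = \bE[f^{-}(X)]$ is strictly positive. Define probability measures $\bQ_{1}, \bQ_{2}$ on $\cX$ by
\begin{equation*}
    \frac{\rd \bQ_{1}}{\rd \bP_{X}} \defeq \frac{f^{+}}{M}, \qquad \frac{\rd \bQ_{2}}{\rd \bP_{X}} \defeq \frac{f^{-}}{M}.
\end{equation*}
Since $f^{+}$ and $f^{-}$ have disjoint supports, $\bQ_{1}$ and $\bQ_{2}$ are mutually singular, hence distinct. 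The integrability condition $\int_{\cX} \norm{\varphi(x)}_{\cH} \, \rd \bQ_{i}(x) < \infty$ follows from Cauchy--Schwarz together with $f \in L^{2}(\bP_{X})$ and \eqref{equ:FiniteSecondMomentOfUandV}.

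Finally, their KMEs coincide: using the Bochner-integral identity $\bE[f(X)\varphi(X)] = 0$ and $f = f^{+} - f^{-}$,
\begin{equation*}
    M\, \mu_{\bQ_{1}} - M\, \mu_{\bQ_{2}} = \bE[f^{+}(X) \varphi(X)] - \bE[f^{-}(X) \varphi(X)] = \bE[f(X) \varphi(X)] = 0,
\end{equation*}
so $\mu_{\bQ_{1}} = \mu_{\bQ_{2}}$. This contradicts the assumption that $k$ is characteristic, and hence $\cH_{\cC}$ must be dense in $L_{\cC}^{2}(\bP_{X})$. The main subtlety to watch is ensuring that all integrability conditions (Bochner integrability of $f(X)\varphi(X)$ and $\int \norm{\varphi} \, \rd \bQ_{i} < \infty$) hold so that the definition of ``characteristic'' in the paper applies; both follow from the square-integrability assumption \eqref{equ:FiniteSecondMomentOfUandV} via Cauchy--Schwarz.
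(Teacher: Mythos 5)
Your proof is correct and follows essentially the same strategy as the paper's: assume non-density, take a nonzero element of the orthogonal complement of $\cH_{\cC}$, centre it, and build two distinct probability measures whose difference is (a multiple of) $\tilde{f}\,\rd\bP_{X}$, so that their kernel mean embeddings coincide, contradicting that $k$ is characteristic. The only cosmetic difference is the choice of decomposition --- you use the Jordan decomposition $f^{+}/M$, $f^{-}/M$ where the paper uses $\absval{\tilde{f}}$ and $\absval{\tilde{f}}-\tilde{f}$ --- and your integrability checks match the paper's.
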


\begin{proof}
	Suppose that $\cH_{\cC}$ is not dense in $ L_{\cC}^{2}(\bP_{X})$.
	Then there exists $f\in L^2(\bP_{X})$ that is not $\bP_{X}$-a.e.\ constant such that $[f]\perp_{ L_{\cC}^{2}(\bP_{X})} \cH_{\cC}$.
	Choose $\tilde{f} \coloneqq f-\bE[f(X)]$ and set
	\[
	Q_1(E) \defeq \int_{E} \absval{\tilde{f}}\, \rd \bP_{X},
	\qquad
	Q_2(E) \defeq \int_{E} \bigl( \absval{\tilde{f}} - \tilde{f} \bigr) \, \rd \bP_{X}
	\]
	for every Borel-measurable subset $E\subseteq\cX$.
	Since $\norm{ \tilde{f} }_{L^1(\bP_{X})}\neq 0$, we may assume without loss of generality that $\norm{ \tilde{f} }_{L^1(\bP_{X})} = 1$, making $Q_1$ and $Q_2$ two \emph{distinct} probability distributions.
	Since, for every $h\in\cH$,
	\[
		\innerprod{ \tilde{f} }{ h }_{ L^2(\bP_{X})}
		=
		\innerprod{ f-\bE[f(X)] }{ h }_{ L^2(\bP_{X})}
		\stackrel{[f]\perp\cH_{\cC}}{=}
		\innerprod{ f-\bE[f(X)] }{ \bE[h(X)] }_{ L^2(\bP_{X})}
		=
		0,
	\]
	it follows that $\tilde{f} \perp_{ L^2(\bP_{X})} \cH$.
	Let $Z_1\sim Q_1$ and $Z_2\sim Q_2$ and $x\in\cX$.
	Since $\varphi(x)\in\cH$,
	\[
		\big( \bE[\varphi(Z_1)] - \bE[\varphi(Z_2)] \big)(x)
		=
		\innerprod{ \tilde{f} }{ \varphi(x) }_{ L^2(\bP_{X})}
		=
		0,
	\]
	which contradicts the assumption that $k$ is characteristic.
	Note that, by \Cref{assumption:CME}, $\bE[\varphi(Z_1)]$ and $\bE[\varphi(Z_2)]$ are well defined.
	In fact, by the Cauchy--Schwarz inequality,
	\[
		\bE\big[\norm{\varphi(Z_1)}_{\cH}\big]
		=
		\int_{\cX} \norm{ \varphi(x) }_{\cH}\, \absval{\tilde{f}(x)} \, \rd \bP_{X} (x)
		\leq
		\bE\big[ \norm{ \varphi(X) }_{\cH}^2 \big]^{1/2}
		\,
		\bE\big[ \tilde{f}(X)^2 \big]^{1/2}
		< \infty
	\]
	and similarly for $Z_2$.
\end{proof}
\begin{lemma}
	\label{lemma:kernelC_Xconstants}
	Under \Cref{assumption:CME}, $\ker C_{X} = \{ h\in\cH \mid h \text{ is } \bP_{X} \text{-a.e.\ constant in } \cX \}$ and $\ker \uu{C}_{X} = \{ h\in\cH \mid h=0\ \bP_{X} \text{-a.e.\ in } \cX \}$.	
\end{lemma}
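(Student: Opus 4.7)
The plan is to prove both statements by combining the reproducing-property identities for the covariance operators with the positivity of $C_X$ and $\uu C_X$.

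First, recall from the definitions in \Cref{assumption:CME}\ref{notation:CMEmuC} that, for all $h,h'\in\cH$,
\[
	\innerprod{ h' }{ C_{X} h }_{\cH} = \Cov[h'(X),h(X)],
	\qquad
	\innerprod{ h' }{ \uu{C}_{X} h }_{\cH} = \bE[h'(X) h(X)] = \innerprod{h'}{h}_{L^2(\bP_X)}.
\]
Since $C_X$ and $\uu C_X$ are self-adjoint and non-negative, they admit square roots, so $C_X h = 0 \iff \innerprod{h}{C_X h}_\cH = 0$ (via $\innerprod{h}{C_X h}_\cH = \norm{C_X^{1/2} h}_\cH^2$), and similarly for $\uu C_X$. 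Hence the kernels of $C_X$ and $\uu C_X$ are characterised entirely by the value of the quadratic forms on $h$ alone.

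For the centred case, $\innerprod{h}{C_X h}_\cH = \bV[h(X)]$, so $h\in\ker C_X$ iff $h(X)$ has zero variance, which is equivalent to $h$ being $\bP_X$-a.e.\ constant on $\cX$. Conversely, any such $h$ clearly satisfies $\Cov[h'(X),h(X)] = 0$ for every $h'\in\cH$, hence $C_X h = 0$. For the uncentred case, $\innerprod{h}{\uu C_X h}_\cH = \bE[h(X)^2] = \norm{h}_{L^2(\bP_X)}^2$, so $h\in\ker \uu C_X$ iff $h = 0$ $\bP_X$-a.e., and the converse is again immediate from the identity $\innerprod{h'}{\uu C_X h}_\cH = \innerprod{h'}{h}_{L^2(\bP_X)}$.

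There is no real obstacle here; the only subtlety is to notice that one must characterise the kernel through the scalar-valued quadratic form rather than, say, appealing to injectivity of $C_X^{1/2}$ on the closure of its range, and to use the identities from the reproducing property (essentially the content of \Cref{lemma:RelationCovAndC_XY}) to translate Hilbert-space pairings into $L^2(\bP_X)$ statements.
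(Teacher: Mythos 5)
Your proof is correct and follows essentially the same route as the paper, which likewise reduces both claims to the quadratic-form identities $\innerprod{ h }{ C_{X} h }_{\cH} = \bV[h(X)]$ and $\innerprod{ h }{ \uu{C}_{X} h }_{\cH} = \norm{ h }_{L^2(\bP_{X})}^2$. The only difference is that you spell out the square-root argument showing $C_X h = 0 \iff \innerprod{h}{C_X h}_{\cH} = 0$, which the paper leaves implicit.
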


\begin{proof}
	This is a direct consequence of the facts that $\innerprod{ h }{ C_{X} h }_{\cH} = \bV[h(X)]$ and that $\innerprod{ h }{ \uu{C}_{X} h }_{\cH} = \norm{ h }_{ L^2(\bP_{X})}$.
\end{proof}

\begin{lemma}
	\label{lemma:RelationCovAndC_XY}
	Under \Cref{assumption:CME}, for all $h\in\cH$ and $g\in\cG$,
	\[
	\Cov [h(X), f_g(X)]= \innerprod{ h }{ C_{XY} g }_{\cH},
	\qquad
	\uu{\Cov} [h(X), f_g(X)] = \innerprod{ h }{ \uu{C}_{XY} g }_{\cH} .
	\]
\end{lemma}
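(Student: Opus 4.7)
The plan is to reduce both identities to the law of total expectation (the tower property) applied to the product $h(X) g(Y)$, using that $h \in \cH \subseteq L^2(\bP_X)$ and $g \in \cG \subseteq L^2(\bP_Y)$ by \Cref{assumption:CME}\ref{notation:CMErvs}. The uncentred identity is a one-line calculation, and the centred identity then follows by subtracting the means.

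Concretely, I would first observe that $h(X) g(Y) \in L^1(\bP)$: by Cauchy--Schwarz and \eqref{equ:HnormStrongerThanL2norm},
\[
\bE\bigl[\absval{h(X) g(Y)}\bigr] \le \norm{h}_{L^2(\bP_X)} \norm{g}_{L^2(\bP_Y)} \le \bE\bigl[\norm{\varphi(X)}_\cH^2\bigr]^{1/2} \bE\bigl[\norm{\psi(Y)}_\cG^2\bigr]^{1/2} \norm{h}_\cH \norm{g}_\cG,
\]
which is finite by \eqref{equ:FiniteSecondMomentOfUandV}. This legitimises conditioning on $X$: since $h(X)$ is $\sigma(X)$-measurable, the tower property gives
\[
\bE[h(X) g(Y)] = \bE\bigl[h(X)\, \bE[g(Y) \mid X]\bigr] = \bE[h(X) f_g(X)],
\]
where we have used that $f_g(X) = \bE[g(Y) \mid X]$ almost surely (the redefinition $f_g \equiv 0$ on the $\bP_X$-null set $\cX \setminus \cX_Y$ is harmless).

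For the uncentred identity, the right-hand side is, by the definition of $\uu{C}_{XY}$ and the reproducing property,
\[
\innerprod{h}{\uu{C}_{XY} g}_\cH = \bE\bigl[\innerprod{h}{\varphi(X)}_\cH \innerprod{\psi(Y)}{g}_\cG\bigr] = \bE[h(X) g(Y)],
\]
which by the display above equals $\bE[h(X) f_g(X)] = \uu{\Cov}[h(X), f_g(X)]$. For the centred identity, the same reasoning combined with $\bE[f_g(X)] = \bE[g(Y)]$ from \eqref{equ:f_gExpectation} yields
\[
\Cov[h(X), f_g(X)] = \bE[h(X) f_g(X)] - \bE[h(X)] \bE[f_g(X)] = \bE[h(X) g(Y)] - \bE[h(X)] \bE[g(Y)] = \Cov[h(X), g(Y)],
\]
and the latter equals $\innerprod{h}{C_{XY} g}_\cH$ by the reproducing-property identities recorded in \Cref{assumption:CME}\ref{notation:CMEmuC}.

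There is no real obstacle here: the only nontrivial point is checking integrability of $h(X) g(Y)$ so that Fubini/tower applies, which is immediate from the second-moment hypothesis \eqref{equ:FiniteSecondMomentOfUandV}. Everything else is bookkeeping of definitions.
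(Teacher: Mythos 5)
Your proposal is correct and follows essentially the same route as the paper's proof: both reduce the claim to the tower property applied to $h(X)g(Y)$ together with the reproducing-property identities for $C_{XY}$ and $\uu{C}_{XY}$. Your explicit integrability check via Cauchy--Schwarz and \eqref{equ:FiniteSecondMomentOfUandV} is a welcome addition that the paper leaves implicit, but it does not change the argument.
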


\begin{proof}
	Let $h\in\cH$ and $g\in\cG$ be arbitrary.
	Then
	\begin{align*}
		\Cov [h(X), f_g(X)]
		&=
		\bE\big[h(X)\bE[g(Y)|X]\big]
		-
		\bE[h(X)]\bE\big[\bE[g(Y)|X]\big]
		\\
		&=
		\bE[h(X)g(Y)]
		-
		\bE[h(X)]\bE[g(Y)]
		\\
		&=
		\Cov [h(X), g(Y)]
		\\
		&=
		\innerprod{ h }{ C_{XY} g }_{\cH},
	\end{align*}
	as required. The second statement is proved analogously using uncentred covariance operators and without subtracting the (products of) expected values.
\end{proof}

\begin{lemma}
	\label{lemma:TeproducingPropertyAdjoint}
	Under \Cref{assumption:CME}, let $A\colon\cG\to\cH$ be a bounded linear operator.
	Then, for all $x\in\cX$ and $y\in\cY$,
	\[
		(A\psi(y))(x)
		=
		(A^{\ast}\varphi(x))(y),
		\qquad
		\bE[(A\psi(y))(X)]
		=
		(A^{\ast}\mu_{X})(y) .
	\]
\end{lemma}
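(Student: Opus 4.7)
The plan is to apply the reproducing properties of the two RKHSs together with the definition of the adjoint operator; both identities should drop out by a short chain of equalities.

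For the first identity, I would start on the left with $(A\psi(y))(x)$ and use the reproducing property in $\cH$ (Assumption 2.1(d)) to rewrite this as $\innerprod{A\psi(y)}{\varphi(x)}_{\cH}$. Then by definition of the adjoint, this equals $\innerprod{\psi(y)}{A^{\ast}\varphi(x)}_{\cG}$, and a second application of the reproducing property, this time in $\cG$, yields $(A^{\ast}\varphi(x))(y)$.

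For the second identity, I would take expectations: starting from $\bE[(A\psi(y))(X)]$, expand as $\bE[\innerprod{A\psi(y)}{\varphi(X)}_{\cH}]$ using the reproducing property. Since $A\psi(y) \in \cH$ is deterministic and $\bE[\norm{\varphi(X)}_{\cH}^{2}] < \infty$ by \eqref{equ:FiniteSecondMomentOfUandV}, the inner product can be pulled out of the Bochner integral to give $\innerprod{A\psi(y)}{\bE[\varphi(X)]}_{\cH} = \innerprod{A\psi(y)}{\mu_{X}}_{\cH}$. Applying the adjoint relation and then the reproducing property in $\cG$ once more produces $(A^{\ast}\mu_{X})(y)$.

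There is no real obstacle here; the only mildly non-routine point is justifying the interchange of expectation and inner product in the second identity, which is immediate from the continuity of $\innerprod{A\psi(y)}{\cdot}_{\cH}$ and the defining property of the Bochner integral. The first identity requires nothing beyond the reproducing property and the adjoint relation.
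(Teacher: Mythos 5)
Your proof is correct and follows essentially the same route as the paper: both identities are obtained by the reproducing property, the definition of the adjoint, and the reproducing property of $\mu_{X}$ (which you re-derive by pulling the inner product through the Bochner integral, a step the paper absorbs into the already-established identity $\innerprod{ h }{ \mu_{X} }_{\cH} = \bE[h(X)]$ from \Cref{assumption:CME}). No gaps.
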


\begin{proof}
	By the reproducing properties of $\psi$, $\varphi$, and $\mu_{X}$,
	\begin{align*}
		(A\psi(y))(x)
		=
		\innerprod{ A\psi(y) }{ \varphi(x) }_{\cH}
		& =
		\innerprod{ \psi(y) }{ A^{\ast}\varphi(x) }_{\cG}
		=
		(A^{\ast}\varphi(x))(y), \\
		\bE[(A\psi(y))(X)]
		=
		\innerprod{ A\psi(y) }{ \mu_{X} }_{\cH}
		& =
		\innerprod{ \psi(y) }{ A^{\ast}\mu_{X} }_{\cG}
		=
		(A^{\ast}\mu_{X})(y) ,
	\end{align*}
	as claimed.
\end{proof}

\begin{lemma}
	\label{lemma:ConvergenceOfProjections}
	Let $V$ be a Hilbert space, let $U_1 \subseteq U_2 \subseteq \cdots$ be an increasing sequence of closed subspaces $U_n \subseteq V$, $n\in\bN$, and let $U \defeq \bigcup_{n\in\bN} U_n$.
	Further, let $P_{U_n}\colon V\to U_n$ denote the orthogonal projection onto $U_n$.
	Then, for all $v\in \overline U$,
	\[
		P_{U_n} v \xrightarrow[n \to \infty]{} v .
	\]
\end{lemma}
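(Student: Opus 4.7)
The plan is to split the argument into two cases: first $v \in U$, then extend by a standard density/contraction argument to $v \in \overline{U}$.

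First I would observe that if $v \in U = \bigcup_{n \in \bN} U_n$, then $v \in U_{n_0}$ for some $n_0 \in \bN$, and since the sequence $(U_n)_{n\in\bN}$ is increasing, $v \in U_n$ for all $n \geq n_0$. Since $P_{U_n}$ is the identity on $U_n$, this gives $P_{U_n} v = v$ for all $n \geq n_0$, so $P_{U_n} v \to v$ trivially.

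Next, for general $v \in \overline{U}$, I would fix $\varepsilon > 0$ and choose $u \in U$ with $\norm{ v - u }_{V} < \varepsilon$. By the first step, there exists $n_0 \in \bN$ such that $P_{U_n} u = u$ for all $n \geq n_0$. Using that orthogonal projections are contractions (i.e. $\norm{ P_{U_n} }_{V \to V} \leq 1$), I get for $n \geq n_0$,
\[
\norm{ P_{U_n} v - v }_{V} \leq \norm{ P_{U_n} v - P_{U_n} u }_{V} + \norm{ u - v }_{V} \leq 2\norm{ v - u }_{V} < 2\varepsilon.
\]
Since $\varepsilon > 0$ was arbitrary, this yields $P_{U_n} v \to v$ in $V$.

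There is no real obstacle here: the argument uses only that $U$ is dense in $\overline{U}$ by construction, that each $P_{U_n}$ is a norm-one operator, and the monotonicity $U_n \subseteq U_{n+1}$ which guarantees that once $P_{U_{n_0}} u = u$ we have $P_{U_n} u = u$ for all subsequent $n$. Thus the lemma follows from a one-line $\varepsilon$-approximation argument after handling the dense subset $U$.
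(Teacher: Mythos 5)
Your proposal is correct and follows essentially the same argument as the paper: approximate $v\in\overline{U}$ by some $u\in U$, note that $P_{U_n}u=u$ for all sufficiently large $n$ by monotonicity of the subspaces, and conclude via the triangle inequality and the fact that orthogonal projections are non-expansive. The only cosmetic difference is that you state the $v\in U$ case as a separate first step, whereas the paper folds it into the single $\varepsilon$-argument.
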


\begin{proof}
	Let $v\in \overline U$ and $\varepsilon>0$.
	Then there exists $u\in U$ such that $\norm{ u-v }<\varepsilon$.
	Since the sequence $(U_n)_{n \in \bN}$ is increasing and $U$ is its union, there exists an $n_0\in\bN$ such that $u\in U_n$ and thereby $P_{U_n}u = u$ for all $n\ge n_0$.
	We therefore obtain, for $n\ge n_0$,
	\[
		\norm{ P_{U_n}v-v }
		\leq
		\norm{ P_{U_n}v - P_{U_n}u } + \norm{ P_{U_n}u - u } + \norm{ u-v }
		\leq
		\norm{ P_{U_n} } \norm{v-u} + \norm{u-v}
		<
		2\varepsilon,
	\]
	by the triangle inequality and non-expansivity of orthogonal projection.
\end{proof}

\begin{lemma}
	\label{lemma:TechnicalDetailsMainProof}
	Under \Cref{assumption:CME}, with $\cH^{(n)}$, $C^{(n)}$, $h_g^{(n)}$ as in \Cref{thm:CMEproperlyLimit} and $\uu{C}^{(n)}$, $\uu{h}_g^{(n)}$ as in \Cref{thm:CMEproperlyUncenteredLimit},
	\begin{enumerate}[label=(\alph*)]
		\item \label{item:TechnicalDetailsMainProofA}
		$\displaystyle \Cov[h(X),f_g(X)] = \lim_{n\to\infty} \Cov[h(X),h_g^{(n)}(X)]$ for all $h\in\cH$;
		\item \label{item:TechnicalDetailsMainProofB}
		$[h_g^{(n)}]$ is the $ L_{\cC}^2$-orthogonal projection of $[f_g]$ onto $\cH_{\cC}^{(n)}$ for all $g\in\cG$;
		\item \label{item:TechnicalDetailsMainProofC}
		$\displaystyle \uu{\Cov}[h(X),f_g(X)] = \lim_{n\to\infty} \uu{\Cov}[h(X),\uu{h}_g^{(n)}(X)]$ for all $h\in\cH$;
		\item \label{item:TechnicalDetailsMainProofD}
		$\uu{h}_g^{(n)}$ is the $L^2$-orthogonal projection of $f_g$ onto $\cH^{(n)}$ for all $g\in\cG$.
	\end{enumerate}
\end{lemma}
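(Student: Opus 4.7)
The plan is to treat parts \ref{item:TechnicalDetailsMainProofA} and \ref{item:TechnicalDetailsMainProofB} together, since both rest on an explicit formula for $C_{X} h_g^{(n)}$, and then to note that \ref{item:TechnicalDetailsMainProofC} and \ref{item:TechnicalDetailsMainProofD} follow by the same template after appropriate substitutions. The key preliminary computation uses the eigenbasis structure of $(h_j)_{j\in\bN}$: writing $C_{X} h_j = \sigma_j h_j$, one checks that $C_{X}^{(n)} = P_{\cH^{(n)}} C_{X} P_{\cH^{(n)}}$ acts as multiplication by $\sigma_j$ on $h_j$ for $j\le n$ and as $0$ otherwise, so its Moore--Penrose pseudo-inverse sends $h_j$ to $\sigma_j^{-1} h_j$ when $j\le n$ and $\sigma_j>0$, and to $0$ otherwise. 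Consequently, $C_{X}C_{X}^{(n)\dagger} = \Pi^{(n)}$, the orthogonal projection onto $\spn\{h_j : j\le n,\ \sigma_j > 0\}$, and (since $\Pi^{(n)}$ factors through $\cH^{(n)}$)
\begin{equation*}
  C_{X} h_g^{(n)} \;=\; C_{X} C_{X}^{(n)\dagger} C_{XY}^{(n)} g \;=\; \Pi^{(n)} C_{XY} g.
\end{equation*}

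For \ref{item:TechnicalDetailsMainProofA}, I would combine this with \Cref{lemma:RelationCovAndC_XY} to get $\Cov[h(X),h_g^{(n)}(X)] = \innerprod{h}{\Pi^{(n)} C_{XY} g}_{\cH}$ and $\Cov[h(X),f_g(X)] = \innerprod{h}{C_{XY}g}_{\cH}$. Because the $h_j$ form a basis and their eigenvalues accumulate only at $0$, $\Pi^{(n)}$ converges strongly to $\Pi^{\infty}$, the projection onto $\overline{\ran C_{X}}$. By Baker's factorisation (\citealp[Theorem~1]{baker1973joint}, already invoked in the proof of \Cref{thm:CMEproperlyLimit}), $C_{XY} = C_{X}^{1/2} V C_{Y}^{1/2}$, so $\ran C_{XY} \subseteq \ran C_{X}^{1/2} \subseteq \overline{\ran C_{X}}$, whence $\Pi^{\infty} C_{XY} g = C_{XY} g$ and the claimed limit follows by continuity of $\innerprod{h}{\cdot}_{\cH}$.

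For \ref{item:TechnicalDetailsMainProofB}, I would first record that $h_g^{(n)} \in \cH^{(n)}$, since $\ran C_{X}^{(n)\dagger} \subseteq (\ker C_{X}^{(n)})^{\perp} \subseteq \overline{\ran C_{X}^{(n)}} \subseteq \cH^{(n)}$, so $[h_g^{(n)}] \in \cH_{\cC}^{(n)}$. To verify the orthogonality characterisation of the projection, decompose an arbitrary $h \in \cH^{(n)}$ as $h = \Pi^{(n)} h + h_0$ with $h_0 \in \spn\{h_j : j \le n,\ \sigma_j = 0\}$. By \Cref{lemma:kernelC_Xconstants} the latter subspace consists of $\bP_X$-a.e.\ constant functions, so $\Cov[h_0(X),\,\cdot\,] = 0$. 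Using the formula for $C_X h_g^{(n)}$ derived above and the self-adjointness of $\Pi^{(n)}$,
\begin{equation*}
  \Cov[h(X), h_g^{(n)}(X)]
  = \innerprod{\Pi^{(n)} h}{C_{XY} g}_{\cH}
  = \Cov[\Pi^{(n)} h (X), f_g(X)]
  = \Cov[h(X), f_g(X)],
\end{equation*}
which in the $L_{\cC}^{2}$ inner product reads $\innerprod{[h]}{[h_g^{(n)} - f_g]}_{L_{\cC}^{2}} = 0$ for all $[h] \in \cH_{\cC}^{(n)}$, identifying $[h_g^{(n)}]$ as the required orthogonal projection.

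Parts \ref{item:TechnicalDetailsMainProofC} and \ref{item:TechnicalDetailsMainProofD} go through by the same template with the centred covariance $\Cov$ replaced by $\uu\Cov$, the space $L_{\cC}^{2}$ by $L^{2}$, and $C_X, C_{XY}$ by $\uu C_X, \uu C_{XY}$; one takes the basis to diagonalise $\uu C_X$ (for which \Cref{assumption:CME}\ref{notation:NoNontrivialZerosInH} combined with \Cref{lemma:kernelC_Xconstants} gives injectivity, so all eigenvalues are positive and the analogue of $\Pi^{\infty}$ is the identity). In this uncentred setting the ``constant'' component $h_0$ is absent, which is precisely why the natural ambient space becomes $L^{2}$ rather than its quotient $L_{\cC}^{2}$. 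The main obstacle, and the reason some care is required in \ref{item:TechnicalDetailsMainProofB}, is the bookkeeping around $\ker C_X \cap \cH^{(n)}$: the eigendirections of $C_X$ with eigenvalue zero are exactly the $\bP_X$-a.e.\ constant elements of $\cH$, and they must be cleanly separated from the rest of $\cH^{(n)}$ so that the pseudo-inverse identity lines up with the quotient by $\cC$. \Cref{lemma:kernelC_Xconstants} is the bridge between these two notions of degeneracy and makes this bookkeeping possible.
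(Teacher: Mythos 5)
Your proof is correct and takes essentially the same route as the paper's: both arguments rest on the identity $C_{X} h_g^{(n)} = C_{X} C_{X}^{(n)\dagger} C_{XY}^{(n)} g$ (the projected version of $C_{XY}g$) together with \Cref{lemma:RelationCovAndC_XY}, and on the fact (\Cref{lemma:kernelC_Xconstants}) that the zero-eigenvalue directions of $C_{X}$ are exactly the $\bP_{X}$-a.e.\ constants, which is what makes the quotient by $\cC$ line up with the pseudo-inverse. The one place you diverge is the limit step in \ref{item:TechnicalDetailsMainProofA}: the paper passes directly through $\innerprod{ h }{ C_{XY}^{(n)} g }_{\cH} \to \innerprod{ h }{ C_{XY} g }_{\cH}$ (weak convergence of $C^{(n)}$ to $C$), whereas you go via strong convergence of $\Pi^{(n)}$ to the projection onto $\overline{\ran C_{X}}$ and then need Baker's factorisation to see that $C_{XY}g$ already lies in that closure --- a correct but slightly longer detour; your explicit check that $h_g^{(n)}\in\cH^{(n)}$ via $\ran C_{X}^{(n)\dagger}\subseteq(\ker C_{X}^{(n)})^{\perp}$ is a detail the paper merely asserts.
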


\begin{proof}
	We only give the proofs of \ref{item:TechnicalDetailsMainProofA} and \ref{item:TechnicalDetailsMainProofB}; \ref{item:TechnicalDetailsMainProofC} and \ref{item:TechnicalDetailsMainProofD} can be proven similarly.
	It is clear that that $C^{(n)} \to C$ (in the strong and thereby in the weak sense) as $n \to \infty$ and that $C_{X}$ and $C_{X}^{(n)}$ agree on $\cH^{(n)} \ni h_g^{(n)}$.
	Using \Cref{lemma:RelationCovAndC_XY} we obtain, for all $h\in\cH$,
	\begin{align*}
		\allowdisplaybreaks
		\Cov[h(X),f_g(X)]
		&=
		\innerprod{ h }{ C_{XY} g }_{\cH} \\
		&=
		\lim_{n\to\infty} \innerprod{ h }{ C_{XY}^{(n)} g }_{\cH} \\
		&=
		\lim_{n\to\infty} \innerprod{ h }{ C_{X}^{(n)} h_g^{(n)} }_{\cH}	\\		
		&=
		\lim_{n\to\infty} \innerprod{ h }{ C_{X} h_g^{(n)} }_{\cH} \\
		&=
		\lim_{n\to\infty} \Cov[h(X),h_g^{(n)}(X)],
	\end{align*}
	which yields \ref{item:TechnicalDetailsMainProofA}.
	Also, for arbitrary $h^{(n)}\in \cH^{(n)}$, \Cref{lemma:RelationCovAndC_XY} yields
	\begin{align*}
		\allowdisplaybreaks
		\langle [h^{(n)}],[f_g]\rangle_{ L_{\cC}^2}
		&=
		\Cov\big[h^{(n)}(X),f_g(X)\big] \\
		&=
		\innerprod{ h^{(n)} }{ C_{XY} g }_{\cH} \\
		&=
		\innerprod{ C_{YX} h^{(n)} }{ g }_{\cG} \\
		&=
		\innerprod{ C_{YX}^{(n)} h^{(n)} }{ g }_{\cG} \\
		&=
		\innerprod{ h^{(n)} }{ C_{XY}^{(n)} g }_{\cH} \\
		&=
		\innerprod{ h^{(n)} }{ C_{X}^{(n)} h_g^{(n)} }_{\cH} \\
		&=
		\innerprod{ h^{(n)} }{ C_{X} h_g^{(n)} }_{\cH} \\
		&=
		\Cov\big[h^{(n)}(X), h_g^{(n)}(X)\big] \\
		&=
		\innerprod{ [h^{(n)}] }{ [h_g^{(n)}] }_{ L_{\cC}^2},
	\end{align*}
	which yields \ref{item:TechnicalDetailsMainProofB}.
\end{proof}

\begin{lemma}
	\label{lemma:LiftingPropertiesFromL2RtoL2G}
	Let \Cref{assumption:CME} hold and $\cH^{(1)} \subseteq \cH^{(2)} \subseteq \cdots$ be an increasing sequence of closed subspaces $\cH^{(n)}$ of $ L^{2}(\bP_{X})$, $n\in\bN$.
	Further, let $\fm,\fm^{(n)} \in L^{2}(\bP_{X} ; \cG) \simeq L^{2}(\bP_{X})\otimes \cG$ and denote $\overline{f} \defeq f - \bE[f(X)]$ for $f \in L^{2}(\bP_{X})$ and $\overline{\ff} \defeq \ff - \bE[\ff(X)]$ for $\ff \in L^{2}(\bP_{X} ; \cG)$.
	\begin{enumerate}[label=(\alph*)]		
		\item \label{item:LiftingPropertyA}
		If
		$([\fm^{(n)}](\quark))(y)$ is the orthogonal projection in $ L_{\cC}^{2}(\bP_{X})$ of $([\fm](\quark))(y)$ onto $\cH_{\cC}^{(n)}$ for each $y\in\cY$,
		then
		$[\fm^{(n)}]$ is the orthogonal projection in $ L_{\cC}^{2}(\bP_{X} ; \cG)$ of $[\fm]$ onto $(\cH^{(n)}\otimes \cG)_{\cC}$.		
		\item \label{item:LiftingPropertyB}
		If, in addition to the assumption in \ref{item:LiftingPropertyA},
		$([\fm^{(n)}](\quark))(y) \to ([\fm](\quark))(y)$ in $ L_{\cC}^{2}(\bP_{X})$ as $n \to \infty$ for each $y\in\cY$,
		then
		$[\fm^{(n)}] \to [\fm]$ in $ L_{\cC}^{2}(\bP_{X} ; \cG)$,
		or, in other words, 
		$\overline{\fm}^{(n)}(X) \to \overline{\fm}(X)$ in $ L^{2}(\bP ; \cG)$.
		\item \label{item:LiftingPropertyC}
		If
		$(\fm^{(n)}(\quark))(y)$ is the orthogonal projection in $ L^{2}(\bP_{X} ; \bR)$ of $(\fm(\quark))(y)$ onto $\cH^{(n)}$ for each $y\in\cY$,
		then
		$\fm^{(n)}$ is the orthogonal projection in $ L^{2}(\bP_{X} ; \cG)$ of $\fm$ onto $\cH^{(n)}\otimes \cG$.
		\item \label{item:LiftingPropertyD}
		If, in addition to the assumption in \ref{item:LiftingPropertyC}, 
		$(\fm^{(n)}(\quark))(y) \to (\fm(\quark))(y)$ in $ L^{2}(\bP_{X} ; \bR)$ as $n \to \infty$ for each $y\in\cY$,
		then
		$\fm^{(n)} \to \fm$ in $ L^{2}(\bP_{X} ; \cG)$,
		or, in other words, 
		$\fm^{(n)}(X) \to \fm(X)$ in $ L^{2}(\bP ; \cG)$.
	\end{enumerate}
\end{lemma}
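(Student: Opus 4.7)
The plan is to exploit two structures: the canonical Hilbert tensor product identification $L^{2}(\bP_{X};\cG) \simeq L^{2}(\bP_{X}) \otimes \cG$ of Bochner-valued $L^{2}$ with the Hilbert tensor product, and the continuous slicing maps $\sigma_{y} \colon L^{2}(\bP_{X};\cG) \to L^{2}(\bP_{X})$, $\sigma_{y}(\fm)(x) \defeq \innerprod{\fm(x)}{\psi(y)}_{\cG} = \fm(x)(y)$, whose continuity follows from the reproducing property and the bound $\norm{\sigma_{y}(\fm)}_{L^{2}(\bP_{X})} \leq \norm{\psi(y)}_{\cG}\,\norm{\fm}_{L^{2}(\bP_{X};\cG)}$. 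I would prove the uncentred parts (c) and (d) first, and then remark that (a), (b) follow by the identical argument with $L_{\cC}^{2}$ replacing $L^{2}$ throughout.

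For (c), two things need to be checked: that $\fm^{(n)}$ lies in the closed subspace $\cH^{(n)} \otimes \cG \subseteq L^{2}(\bP_{X};\cG)$, and that $\fm - \fm^{(n)}$ is orthogonal to it. For the first, fix an $L^{2}(\bP_{X})$-orthonormal basis $h_{1},\dots,h_{n}$ of $\cH^{(n)}$. The slice-wise projection hypothesis gives $\sigma_{y}(\fm^{(n)}) = \sum_{i=1}^{n} a_{i}(y)\,h_{i}$ with coefficients $a_{i}(y) = \innerprod{\sigma_{y}(\fm^{(n)})}{h_{i}}_{L^{2}(\bP_{X})} = \innerprod{\bE[h_{i}(X)\fm^{(n)}(X)]}{\psi(y)}_{\cG}$, the second equality combining Bochner linearity with the reproducing property. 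Hence $a_{i} = \bE[h_{i}(X)\fm^{(n)}(X)] \in \cG$ and $\fm^{(n)} = \sum_{i=1}^{n} h_{i} \otimes a_{i} \in \cH^{(n)} \otimes \cG$. For orthogonality, testing against $h \otimes \psi(y)$ with $h \in \cH^{(n)}$ reduces, again by the reproducing property, to $\innerprod{h}{\sigma_{y}(\fm - \fm^{(n)})}_{L^{2}(\bP_{X})} = 0$, which is precisely the hypothesis; linearity plus density of $\spn \{\psi(y) \mid y \in \cY\}$ in $\cG$ (together with continuity in $g$ via Cauchy--Schwarz) extends the orthogonality to all $g \in \cG$.

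For (d), part (c) shows that $\fm^{(n)} = P_{\cH^{(n)} \otimes \cG}\,\fm$ and the subspaces $\cH^{(n)} \otimes \cG$ are nested and increasing. \Cref{lemma:ConvergenceOfProjections} yields $\fm^{(n)} \to P_{W}\,\fm$ in $L^{2}(\bP_{X};\cG)$, where $W$ is the closure of $\bigcup_{n \in \bN} \cH^{(n)} \otimes \cG$. The remaining task---and the main obstacle---is to identify $P_{W}\,\fm$ with $\fm$. By continuity of $\sigma_{y}$ one has $\sigma_{y}(\fm^{(n)}) \to \sigma_{y}(P_{W}\,\fm)$, and by the convergence hypothesis $\sigma_{y}(\fm^{(n)}) \to \sigma_{y}(\fm)$, both in $L^{2}(\bP_{X})$. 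Hence $\sigma_{y}(P_{W}\,\fm - \fm) = 0$ in $L^{2}(\bP_{X})$ for every $y \in \cY$. This is a slice-wise almost-everywhere equality whose null set may depend on $y$; to upgrade it to equality of $\cG$-valued functions, invoke separability of $\cG$ to extract a countable family $\{y_{m}\}_{m \in \bN}$ such that $\{\psi(y_{m})\}$ spans a dense subspace of $\cG$. Outside the countable union of the corresponding $\bP_{X}$-null sets, $P_{W}\,\fm(x) - \fm(x)$ is orthogonal in $\cG$ to every $\psi(y_{m})$ and therefore vanishes, establishing (d).

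Parts (a) and (b) follow by repeating the argument in the quotient spaces $L_{\cC}^{2}(\bP_{X})$ and $L_{\cC}^{2}(\bP_{X};\cG)$: the scalar coefficient $a_{i}(y)$ is replaced by $\Cov(h_{i}(X), \sigma_{y}(\fm^{(n)})(X))$, which still equals $\innerprod{\Cov(h_{i}(X),\fm^{(n)}(X))}{\psi(y)}_{\cG}$ by a centred Bochner computation and so still lies in $\cG$; the $y$-dependent additive constant produced by the quotient coalesces into a single $\cG$-valued constant via the same reproducing-property identity, so that $[\fm^{(n)}] \in (\cH^{(n)} \otimes \cG)_{\cC}$. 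The only genuinely non-routine ingredient is the separability-based null-set reduction at the end of (d) and its counterpart in (b); everything else is a direct transcription of the Hilbert-tensor-product projection formula into the concrete setting of $L^{2}$ and $L_{\cC}^{2}$.
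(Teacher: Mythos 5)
Your proof is correct and takes essentially the same route as the paper's: test against elementary tensors $h\otimes\psi(y)$, use the reproducing property to reduce each claim to the scalar slice-wise hypothesis, invoke \Cref{lemma:ConvergenceOfProjections} for the convergence, and identify the limit by slicing. You merely swap the order (proving the uncentred parts first) and make explicit two details the paper leaves implicit --- the membership $\fm^{(n)}\in\cH^{(n)}\otimes\cG$ and the separability-based countable-null-set argument identifying $P_{W}\fm$ with $\fm$ --- though note that your explicit finite basis $h_1,\dots,h_n$ tacitly assumes $\dim\cH^{(n)}=n$, which holds in the paper's application but not in the lemma's general statement; the same argument phrased via orthogonal complements covers the general case.
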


\begin{proof}
	We only give the proofs of \ref{item:LiftingPropertyA} and \ref{item:LiftingPropertyB};
	\ref{item:LiftingPropertyC} and \ref{item:LiftingPropertyD} can be proven similarly with fewer technicalities.	
	Let $h\in\cH^{(n)}$ and $y\in\cY$. Then
	\begin{align*}
		\innerprod{[\fm^{(n)}] - [\fm]}{[h\otimes \psi(y)]}_{ L_{\cC}^{2}(\bP_{X} ; \cG)}
		&=
		\bE\bigl[ \biginnerprod{\overline{\fm}^{(n)}(X) - \overline{\fm}(X)}{\psi(y)}_{\cG}\, \overline{h}(X) \bigr]
		\\
		&=
		\bE \left[ \Bigl( \bigl(\overline{\fm}^{(n)}(X)\bigr)(y) - \big(\overline{\fm} (X)\big)(y) \Bigr) \, \overline{h}(X) \right]
		\\
		&=
		\biginnerprod{ \big([\fm^{(n)}](\quark)\big)(y) - \big([\fm] (\quark)\big)(y) }{[h]}_{ L_{\cC}^{2}(\bP_{X};\bR)}
		\\
		&=
		0,
	\end{align*}
	which proves \ref{item:LiftingPropertyA}.
	Hence, by \Cref{lemma:ConvergenceOfProjections}, $[\fm^{(n)}]$ converges in $ L_{\cC}^{2}(\bP_{X} ; \cG)$ to some limit $[\fm']$.
	This implies pointwise convergence for each $y\in\cY$ in the following sense:
	\begin{align*}
		\bignorm{ \bigl([\fm^{(n)}](\quark)\big)(y) - \big([\fm'](\quark)\bigr)(y) }_{ L_{\cC}^{2}(\bP_{X} ; \bR)}^{2}
		&=
		\bE \bigl[ \absval{ \innerprod{\psi(y)}{\overline{\fm}^{(n)}(X) - \overline{\fm}'(X)}_{\cG} }^{2} \bigr]
		\\
		&\leq
		\norm{\psi(y)}_{\cG}^{2} \, \bE \bigl[ \norm{\overline{\fm}^{(n)}(X) - \overline{\fm}'(X)}_{\cG}^{2} \bigr]
		\\
		&=
		\norm{\psi(y)}_{\cG}^{2} \, \norm{\fm^{(n)} - \fm'}_{ L_{\cC}^{2}(\bP_{X} ; \cG)}^{2}
		\\
		&
		\xrightarrow[n\to\infty]{}0.
	\end{align*}
	Therefore, by assumption, $[\fm']$ agrees with $[\fm]$ $\bP_{X}$-a.e., proving \ref{item:LiftingPropertyB}.
\end{proof}
\begin{lemma}
	\label{lemma:MartingaleConsequences}
	Under the assumptions and notation of \Cref{thm:CMEproperlyLimit}, $(\mu^{(n)}(X,\quark))_{n\in\bN}$ is a martingale in $ L^{2}(\Omega,\Sigma,\bP ; \cG)$ with respect to the filtration $(\sigma(V^{(n)}))_{n\in\bN}$ of $\Sigma$, where $V^{(n)} \defeq P_{\cH^{(n)}}(\varphi(X))$ and $P_{\cH^{(n)}}\colon \cH \to \cH^{(n)}$ denotes the orthogonal projection in $\cH$ onto $\cH^{(n)}$.
\end{lemma}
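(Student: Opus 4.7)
The strategy is to verify the three standard properties of an $L^2(\bP; \cG)$-valued martingale --- adaptedness, integrability, and the conditional-expectation identity --- all exploiting the crucial hypothesis that $(h_i)_{i \in \bN}$ is an eigenbasis of $C_{X}$. This forces $C_{X}^{(n)\dagger}$ to act diagonally in $(h_i)$, sending $h_i$ to $\lambda_i^{-1} h_i$ for $i \le n$ with $\lambda_i > 0$ and to $0$ otherwise; consequently $h_{\psi(y)}^{(n)} \defeq C_{X}^{(n)\dagger} C_{XY}^{(n)} \psi(y) \in \cH^{(n)}$. Combining this with the reproducing property,
\[
h_{\psi(y)}^{(n)}(X) = \innerprod{h_{\psi(y)}^{(n)}}{\varphi(X)}_{\cH} = \innerprod{h_{\psi(y)}^{(n)}}{V^{(n)}}_{\cH},
\]
so $\mu^{(n)}(X, y) = \mu_{Y}(y) + h_{\psi(y)}^{(n)}(X) - \bE[h_{\psi(y)}^{(n)}(X)]$ is Borel-measurable in $V^{(n)}$, hence $\sigma(V^{(n)})$-measurable. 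The filtration property $\sigma(V^{(n)}) \subseteq \sigma(V^{(n+1)})$ follows from $V^{(n)} = P_{\cH^{(n)}} V^{(n+1)}$, and $L^2(\bP; \cG)$-integrability follows from the boundedness of $(C_{X}^{(n)\dagger} C_{XY}^{(n)})^{\ast}$ (via \Cref{theorem:DouglasExistenceQ} applied to $C_{X}^{(n)}$ and $C_{XY}^{(n)}$) together with $\bE[\norm{\varphi(X)}_{\cH}^{2}] < \infty$.

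For the conditional-expectation identity $\bE[\mu^{(n+1)}(X, \quark) \mid \sigma(V^{(n)})] = \mu^{(n)}(X, \quark)$, the eigenbasis gives $\Cov[h_i(X), h_j(X)] = \lambda_i \delta_{ij}$, so $\{[h_i] : \lambda_i > 0\}$ is an $L_{\cC}^2(\bP_{X})$-orthogonal basis of $\cH_{\cC}^{(n)}$. Together with the $L_{\cC}^2$-projection characterisation supplied by \Cref{lemma:TechnicalDetailsMainProof}\ref{item:TechnicalDetailsMainProofB}, a direct computation yields the one-step telescoping
\[
\mu^{(n+1)}(X, y) - \mu^{(n)}(X, y) = \frac{\Cov[h_{n+1}(X), \ell(y, Y)]}{\lambda_{n+1}} \bigl(h_{n+1}(X) - \bE[h_{n+1}(X)]\bigr),
\]
with the convention that the right-hand side vanishes when $\lambda_{n+1} = 0$. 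Lifting to the $\cG$-valued setting via \Cref{lemma:LiftingPropertiesFromL2RtoL2G}, the martingale identity reduces to the scalar claim $\bE[h_{n+1}(X) \mid \sigma(V^{(n)})] = \bE[h_{n+1}(X)]$.

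The main obstacle is precisely this final scalar conditional-expectation identity. The eigenbasis assumption supplies only pairwise $L^2(\bP)$-uncorrelatedness of the centred coordinates $\tilde h_i(X) \defeq h_i(X) - \bE[h_i(X)]$, whereas the identity in question probes the full $\sigma$-algebra $\sigma(V^{(n)}) = \sigma(h_1(X), \ldots, h_n(X))$ and thus, in principle, all nonlinear test functions. My plan for closing this gap is to identify $\mu^{(n)}(X, \quark)$ with the Doob projection $\bE[\mu_{Y|X} \mid \sigma(V^{(n)})]$, which would render the martingale property automatic by the tower law. Verifying this identification would require combining the $L_{\cC}^2$-projection characterisation of $[h_{\psi(y)}^{(n)}]$ from \Cref{lemma:TechnicalDetailsMainProof}\ref{item:TechnicalDetailsMainProofB} with a density argument in the tensor-product isomorphism $L^{2}(\bP; \cG) \simeq L^{2}(\bP) \otimes \cG$, ensuring that the two sides agree on a sufficiently rich dense subset of $L^{2}(\sigma(V^{(n)}); \cG)$.
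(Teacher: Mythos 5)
Your reduction is correct as far as it goes, and it runs parallel to the paper's own argument: adaptedness and square-integrability are handled exactly as you describe, and your one-step increment
\[
\mu^{(n+1)}(X,y)-\mu^{(n)}(X,y)
=
\sigma_{n+1}^{-1}\,\Cov[h_{n+1}(X),\ell(y,Y)]\,\bigl(h_{n+1}(X)-\bE[h_{n+1}(X)]\bigr)
\]
(with $\sigma_{n+1}$ your $\lambda_{n+1}$) is what the paper obtains by writing the Karhunen--Lo\`eve expansion $\varphi(X)=\mu_X+\sum_{i}Z_i h_i$ with $Z_i=h_i(X)-\bE[h_i(X)]$, observing $\sigma(V^{(n)})=\sigma(Z_1,\dots,Z_n)$, and using that $A^{(n)}\defeq(C_X^{(n)\dagger}C_{XY}^{(n)})^{\ast}$ annihilates $h_i$ for $i>n$ and that $A^{(n+1)}$ agrees with $A^{(n)}$ on $\cH^{(n)}$. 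So the entire content of the lemma is concentrated in the scalar identity $\bE[Z_{n+1}\mid Z_1,\dots,Z_n]=0$ that you single out --- and you do not prove it, so the proposal is not a complete proof. Worse, the escape route you sketch is a dead end: by \Cref{lemma:TechnicalDetailsMainProof}\ref{item:TechnicalDetailsMainProofB}, $[h^{(n)}_{\psi(y)}]$ is the $L^{2}_{\cC}$-orthogonal projection of $[f_{\psi(y)}]$ onto the \emph{linear} span $\cH^{(n)}_{\cC}$, whereas the Doob projection $\bE[\mu_{Y|X}\mid\sigma(V^{(n)})]$ is the orthogonal projection onto all of $L^{2}(\sigma(V^{(n)});\cG)$, which contains every measurable --- in particular every nonlinear --- function of $h_1(X),\dots,h_n(X)$. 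These two projections coincide only under additional structure (e.g.\ joint Gaussianity of the coordinates $Z_i$, or the conditional expectations happening to be affine), so the identification $\mu^{(n)}(X,\quark)=\bE[\mu_{Y|X}\mid\sigma(V^{(n)})]$ cannot be expected in general, and no density argument in $L^{2}(\bP)\otimes\cG$ will change which subspace is being projected onto.

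For the comparison: the paper's proof closes exactly this step by passing $A^{(n+1)}$ through $\bE[\sum_i Z_i h_i\mid Z^{(n)}]$ and writing the result as $A^{(n)}\sum_{i=1}^{n}Z_i h_i$, which tacitly uses $\bE[Z_{n+1}\mid Z^{(n)}]\,A^{(n+1)}h_{n+1}=0$; since the Karhunen--Lo\`eve coordinates are in general only \emph{uncorrelated}, not independent, this is precisely the point you flag. In short, your proposal correctly isolates the one nontrivial ingredient of the lemma but neither supplies it nor can supply it by the route you propose; a complete write-up would have to justify $\bE[Z_{n+1}\mid Z_1,\dots,Z_n]=0$ (or the weaker statement that $\bE[Z_{n+1}\mid Z^{(n)}]\,C_{YX}h_{n+1}=0$), or else impose it as an additional hypothesis.
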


\begin{proof}
	Consider the Karhunen--Lo\`eve expansion of $\varphi(X)$,
	\[
	\varphi(X) = \mu_X + \sum_{i\in\bN} Z_{i}\, h_{i},
	\]
	where $Z_{i}\colon (\Omega,\Sigma,\bP)\to\bR$ are uncorrelated real-valued random variables with $\bE[Z_{i}]=0$ and $\bV[Z_{i}]=\sigma_{i}$ for all $i\in\bN$, $\sigma_i \ge 0$ denoting the eigenvalue of $C_{X}$ corresponding to the eigenvector $h_i$.
	We observe that $\sigma(V^{(n)}) = \sigma(Z^{(n)})$, where $Z^{(n)}\defeq (Z_1,\dots,Z_n)$.
	Now let $A^{(n)} = ( C_{X}^{(n)\dagger} C_{XY}^{(n)})^{\ast}$, $n\in\bN$, and observe that $A^{(n)}v = A^{(n)}P_{\cH^{(n)}}v$ and that $A^{(n+1)}$ and $A^{(n)}$ agree on $\cH^{(n)}$.
	Hence, for $n\in\bN$,
	\begin{align*}
		\bE[ \mu^{(n+1)}(X,\quark) \, | \, V^{(n)}]
		&=
		\mu_Y + A^{(n+1)}\, \bE[V-\mu_X \, | \, Z^{(n)} ]
		\\
		&=
		\mu_Y + A^{(n+1)}\, \bE\Bigl[\sum_{i\in \bN} Z_{i}\, h_{i} \, \Big| \, Z^{(n)} \Bigr]
		\\
		&=
		\mu_Y + A^{(n)}\, \sum_{i=1}^{n} Z_{i}\, h_{i}
		\\
		&=
		\mu_Y + A^{(n)}\, (\varphi(X)-\mu_X)
		\\
		&=
		\mu^{(n)}(X,\quark),
	\end{align*}
	proving the martingale property.
\end{proof}

\begin{lemma}
	\label{lemma:C_YgXwelldefined}
	Let Assumptions \ref{assumption:CME} and \ref{assump:weakerCMElimit} hold.
	Then $\bE[C_{Y|X}] = \int_{\cX} C_{Y|X=x}\, \rd \bP_{X}(x)$ is well defined as a strong (Bochner) integral, i.e.\ $\int_\cX \norm{ C_{Y|X=x} } \, \rd \bP_{X}(x) <\infty$.
\end{lemma}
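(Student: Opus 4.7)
The plan is to verify both the measurability and the integrability requirements for $x \mapsto C_{Y|X=x}$ as a Bochner-integrable map into an appropriate Banach space of operators on $\cG$; integrability will follow from a simple chain of bounds culminating in \eqref{equ:FiniteSecondMomentOfUandV}.

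First, I would dispose of measurability. For $\bP_X$-a.e.\ $x \in \cX$ (namely $x \in \cX_Y$, see \Cref{assumption:CME}\ref{notation:CMErvs}), $\bP_{Y|X=x}$ is a well-defined regular conditional probability with $\bE[\|\psi(Y)\|_\cG^2 \mid X = x] < \infty$, so $C_{Y|X=x}$ is a positive self-adjoint trace-class operator on $\cG$, characterised by $\innerprod{g}{C_{Y|X=x}g'}_\cG = \Cov[g(Y),g'(Y) \mid X=x]$ for $g,g' \in \cG$. Measurability of $x \mapsto C_{Y|X=x}$ as a map into the separable Banach space of trace-class (or Hilbert--Schmidt) operators on the separable Hilbert space $\cG$ follows, via Pettis's theorem, from the weak measurability of $x \mapsto \innerprod{g}{C_{Y|X=x}g'}_\cG$; the latter is inherited from the measurability of the Markov kernel $x \mapsto \bP_{Y|X=x}$ together with joint measurability of $(y,y') \mapsto \ell(y,y')$.

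Next, the key inequality: for $x \in \cX_Y$, the operator norm is dominated by the trace norm, and by definition of a centred covariance operator the trace satisfies
\begin{equation*}
\|C_{Y|X=x}\|_{\mathrm{op}}
\,\leq\,
\|C_{Y|X=x}\|_{\mathrm{tr}}
\,=\,
\bE\bigl[\,\norm{\psi(Y) - \mu_{Y|X=x}}_\cG^2 \,\big|\, X=x\,\bigr]
\,\leq\,
\bE\bigl[\,\norm{\psi(Y)}_\cG^2 \,\big|\, X=x\,\bigr],
\end{equation*}
where the last step uses that the conditional mean is the $\cG$-valued best approximation in $L^2(\bP_{Y|X=x};\cG)$.

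Finally, applying the law of total expectation together with the square-integrability assumption \eqref{equ:FiniteSecondMomentOfUandV},
\begin{equation*}
\int_{\cX} \norm{C_{Y|X=x}}_{\mathrm{op}} \, \rd \bP_X(x)
\,\leq\,
\int_{\cX} \bE\bigl[\,\norm{\psi(Y)}_\cG^2 \,\big|\, X=x\,\bigr] \, \rd \bP_X(x)
\,=\,
\bE\bigl[\,\norm{\psi(Y)}_\cG^2\,\bigr] \,<\, \infty,
\end{equation*}
so the Bochner integral $\int_\cX C_{Y|X=x}\,\rd\bP_X(x)$ exists. No step here is genuinely hard; the only mild subtlety is to set the Bochner integral in an ambient separable operator space (trace class on $\cG$) so that Pettis's theorem is applicable, which is why I would invoke the trace-norm bound rather than only the operator-norm bound. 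Note that \Cref{assump:weakerCMElimit} is not actually used in this lemma — only \Cref{assumption:CME} is needed.
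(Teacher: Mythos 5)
Your proof is correct and follows essentially the same route as the paper's: both reduce to the pointwise bound $\norm{ C_{Y|X=x} } \leq \bE\bigl[\norm{\psi(Y)}_{\cG}^2 \big| X=x\bigr]$ for $x\in\cX_Y$ and then apply the law of total expectation together with \eqref{equ:FiniteSecondMomentOfUandV}; your detour through the trace norm and the variational characterisation of the conditional mean is only a mild variant of the paper's Cauchy--Schwarz argument on the covariance bilinear form. Your additional remarks --- the explicit measurability discussion in a separable operator space (which the paper omits) and the observation that \Cref{assump:weakerCMElimit} is never actually used --- are both accurate.
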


\begin{proof}
	The Cauchy--Schwarz inequality and \eqref{equ:HnormStrongerThanL2norm} imply that, for $x\in\cX_{Y}$,
	\begin{align*}
		\norm{ C_{Y|X=x} } 
		& =
		\sup_{\norm{ g }_{\cG} \leq 1, \norm{ \tilde{g} }_{\cG} \leq 1} \innerprod{ g }{ C_{Y|X=x}\tilde{g} }_{\cG} \\
		& =
		\sup_{\norm{ g }_{\cG} \leq 1, \norm{ \tilde{g} }_{\cG} \leq 1} \innerprod{ g }{ \tilde{g} }_{\cL_{\cC}^{2}(\bP_{Y|X=x})} \\
		& \leq
		\sup_{\norm{ g }_{\cG} \leq 1, \norm{ \tilde{g} }_{\cG} \leq 1} \norm{ g }_{\cL^2(\bP_{Y|X=x})} \norm{ \tilde{g} }_{\cL^2(\bP_{Y|X=x})} \\
		& \leq
		\bE\bigl[\norm{\psi(Y)}_{\cG}^2 \big| X=x\bigr],
	\end{align*}
	which, by the law of total expectation and \eqref{equ:FiniteSecondMomentOfUandV}, yields that
	\[
		\bE[ \norm{ C_{Y|X} } ]
		\leq
		\bE\big[\bE[\norm{\psi(Y)}_{\cG}^2 \mid X]\big]
		=
		\bE[\norm{\psi(Y)}_{\cG}^2]
		<
		\infty,
	\]
	as claimed.
\end{proof}

\section{Empirical Estimates for CMEs}
\label{section:empirical}

In practice, the kernel mean embeddings and kernel \mbox{(cross-)}\-covariance operators will often be estimated empirically from observed data, and so empirical versions of the CME, along with convergence guarantees, are of great importance.
As mentioned in \Cref{remark:EmpiricalIsHard}, this topic is beyond the scope of this paper.
However, we wish to point out why this is a complex problem and briefly address the main difficulties.

In the simplest setting, given $J \in \bN$ independent samples $(X_{1}, Y_{1}), \dots, (X_{J}, Y_{J}) \sim \bP_{XY}$, we have the empirical estimators
\begin{align*}
	\mu_{X} & \approx \widehat{\mu}_{X} \defeq \frac{1}{J} \sum_{j = 1}^{J} \varphi(X_{j}) , &
	C_{XY} & \approx \widehat{C}_{XY} \defeq \frac{1}{J} \sum_{j = 1}^{J} ( \varphi(X_{j}) - \widehat{\mu}_{X} ) \otimes ( \psi(Y_{j}) - \widehat{\mu}_{Y} ) ,
\end{align*}
and so on.
(To simplify the notation, we suppress the obvious $J$-dependence of these estimators.)
Laws of large numbers for these empirical estimators have already been established --- see e.g.\ \citet[Theorem~2]{smola2007embedding} and \citet[Lemma~5.8]{mollenhauer2018ma} --- but the impact of this approximation error upon conditioning is, to the best of our knowledge, not yet fully quantified.
One natural approach to approximate the CME $\mu_{Y|X = x}$ is the regularisation of $\widehat{C}_{X}$ or $\widehat{\uu{C}}_{X}$,
\[
	\mu_{Y|X = x}
	\approx
	\Big(\bigl( \widehat{\uu{C}}_{X} + \varepsilon \Id_{\cH}\bigr)^{\dagger}\, \widehat{\uu{C}}_{XY}\Big)^{\ast} \varphi(x)
	=
	\widehat{\uu{C}}_{YX} \bigl( \widehat{\uu{C}}_{X} + \varepsilon \Id_{\cH}\bigr)^{-1} \varphi(x),
\]
where $\varepsilon > 0$ is a regularisation parameter which may depend on $J$.
Note that such a regularisation can be viewed as an approximation both to the new CME formula derived in \Cref{thm:CMEproperlyUncentered}, $\mu_{Y|X = x} = (\uu{C}_{X}^{\dagger} \uu{C}_{XY})^{\ast} \varphi(x)$, as well as to the original (uncentred) one, $\mu_{Y|X = x} = \uu{C}_{YX} \uu{C}_{X}^{-1} \varphi(x)$.
Therefore, this approach is rather well studied and convergence rates for this strategy have been established under certain conditions \citep{fukumizu2015nonparametric,gruenewaelder2012conditional,park2020measure}.

However, the new formulae \eqref{equ:CMEmuNEW}, \eqref{equ:CMElimit}, and \eqref{equ:CMEmuNEWAlternativeUncentred} relying on the Moore--Penrose pseudo-inverse suggest another type of approximation, where we will focus on the centred case from now on.
The na{\"\i}ve estimate would be
\begin{equation}
	\label{equ:naive_empirical_CME}
	\mu_{Y|X = x}
	\approx
	\widehat{\mu}_{Y} + \bigl( \widehat{C}_{X}^{\dagger} \widehat{C}_{XY} \bigr)^{\ast} \, \bigl( \varphi(x) - \widehat{\mu}_{X} \bigr).
\end{equation}
Note that $\ran \widehat{C}_{XY} \subseteq \ran \widehat{C}_{X}$ and so \eqref{equ:naive_empirical_CME} is well defined.
However, the convergence of $\widehat{C}_{X}$ to $C_{X}$ (e.g.\ in the Hilbert--Schmidt norm, as $J \to \infty$) translates badly to the convergence of $\widehat{C}_{X}^{\dagger}$ to the pseudo-inverse $C_{X}^{\dagger}$.
One problem is that small eigenvalues of $C_{X}$ might be approximated by eigenvalues of $\widehat{C}_{X}$ that are orders of magnitude smaller, causing $\widehat{C}_{X}^{\dagger}$ to ``blow up''.
So, in addition to the convergence of $\widehat{C}_{X}$ in the classical norms (such as the Hilbert--Schmidt norm or operator norm), we need to control the the smallest eigenvalue of $\widehat{C}_{X}$.

A natural workaround, inspired by the finite-rank approximation in \Cref{thm:CMEproperlyLimit}, is to truncate\footnote{Naturally, truncation can be viewed as another form of regularisation. For further regularised estimates of large covariance and precision matrices by tapering, banding, sparsifying or similar see e.g.\ \citet{bickel2008covariance,bickel2008regularized,cai2010optimal,yuan2010high} and references therein.}
the \mbox{(cross-)}\-covariance operators to a subspace $\cH^{(n)} = \spn\{ h_1,\dots,h_n \}$ of $\cH$ with $\dim \cH^{(n)} = n = n(J) \ll J$.
One might thus hope to approximate the dominant $n$ eigenvalues of $C_{X}$ well while artificially setting the others to zero and preventing the blow-up of $\widehat{C}_{X}^{\dagger}$.
There are several results from random matrix theory that control the behaviour of the $n$\textsuperscript{th} eigenvalue of (truncated) empirical covariance matrices for growing $J$ and $n = n(J)$ \citep{bai2010spectral,bai1999methodologies,bai1993limit,heiny2018almost}.
Most of these results are formulated for the case where the true mean is known to be zero and the true covariance matrix is the identity matrix and are typically of the following form, where $\lambda_{\textup{max}}(M)$ and $\lambda_{\textup{min}}(M)$ denote the largest and smallest eigenvalues of a matrix $M$, respectively:

\begin{theorem}[{\citet[Theorem 2]{bai1993limit}}]%
	\label{theorem:Bai1993smallestEigenvalue}%
	Let $(\xi_{ij})_{i,j\in\bN}$ be a double array of independent and identically distributed random variables with zero mean and unit variance.
	For $J\in\bN$, let $n = n(J)$ be such that $n(J) \to \infty$ and $n(J)/J \to \gamma\in (0,1)$ for $J\to \infty$ and let
	\begin{equation}
		\label{equ:DefinitionA_JandS_J}
		A_J = (\xi_{ij})_{i = 1,\dots,n(J),\, j = 1,\dots,J},
		\qquad
		S_J = \tfrac{1}{J} A_JA_J^{\top}.
	\end{equation}
	Then, if $\bE[\xi_{11}^4] <\infty$,
	\[
		\lambda_{\textup{max}}(S_J) \xrightarrow[J\to\infty]{\text{a.e.}} (1+\sqrt{\gamma})^2,
		\qquad
		\lambda_{\textup{min}}(S_J) \xrightarrow[J\to\infty]{\text{a.e.}} (1-\sqrt{\gamma})^2.
	\]
\end{theorem}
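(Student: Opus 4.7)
The plan is to establish the result via the classical two-stage strategy of Marchenko--Pastur theory followed by edge control, which is essentially the approach of Bai--Yin. The first stage proves that the \emph{empirical spectral distribution} (ESD) $F^{S_{J}} \defeq \frac{1}{n} \sum_{i=1}^{n} \delta_{\lambda_{i}(S_{J})}$ converges weakly, almost surely, to the Marchenko--Pastur law with parameter $\gamma$, whose support is exactly the interval $[(1-\sqrt{\gamma})^{2}, (1+\sqrt{\gamma})^{2}]$. This first step already forces
\[
\liminf_{J\to\infty} \lambda_{\textup{max}}(S_{J}) \geq (1+\sqrt{\gamma})^{2},
\qquad
\limsup_{J\to\infty} \lambda_{\textup{min}}(S_{J}) \leq (1-\sqrt{\gamma})^{2} \quad \text{a.s.},
\]
so the real content lies in proving the matching opposite inequalities --- i.e., that no eigenvalue escapes the support of the limiting measure. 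For the ESD step I would invoke the standard Stieltjes transform method: writing $m_{J}(z) \defeq \frac{1}{n}\operatorname{tr}(S_{J}-zI)^{-1}$, one shows via Schur complements and the resolvent identity that $m_{J}(z)$ converges a.s.\ to the unique solution $m(z)$ of the MP fixed-point equation, and a.s.\ convergence of the ESD then follows from the standard Stieltjes continuity theorem plus a Borel--Cantelli argument made possible by the finite fourth moment hypothesis.

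For the upper edge I would use the moment method, which is the cleanest route to a.s.\ convergence. By a standard truncation argument one replaces $\xi_{ij}$ by $\tilde{\xi}_{ij} \defeq \xi_{ij}\mathds{1}[\absval{\xi_{ij}} \leq \delta_{J}\sqrt{J}]$ for a sequence $\delta_{J}\to 0$ slowly; the finite fourth moment controls the truncation error, and recentering/rescaling introduces only negligible perturbations. One then estimates $\bE[\operatorname{tr} S_{J}^{k_{J}}]$ for an exponent $k_{J} \to \infty$ satisfying $k_{J} = O(J^{1/6})$ or similar. The combinatorial analysis of $\bE[\xi_{i_{1}j_{1}}\xi_{i_{2}j_{1}}\xi_{i_{2}j_{2}}\cdots]$ reduces, modulo lower-order terms, to counting Dyck-type paths on a bipartite graph; the dominant contribution yields $\bE[\operatorname{tr} S_{J}^{k_{J}}] \leq J \cdot (1+\sqrt{\gamma})^{2k_{J}}(1+o(1))$. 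Since $\lambda_{\textup{max}}(S_{J})^{k_{J}} \leq \operatorname{tr} S_{J}^{k_{J}}$, Markov's inequality plus Borel--Cantelli (where the truncation is what makes the higher-moment cancellations usable) gives $\limsup \lambda_{\textup{max}}(S_{J}) \leq (1+\sqrt{\gamma})^{2}$ a.s.

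The smallest eigenvalue is the main obstacle, because the moment method is blind to $\lambda_{\textup{min}}$ (only positive moments of $S_{J}$ appear in $\operatorname{tr} S_{J}^{k}$). Following Bai--Yin, I would instead work with the rectangular matrix $B_{J} \defeq A_{J}/\sqrt{J}$ so that $\lambda_{\textup{min}}(S_{J}) = \sigma_{\textup{min}}(B_{J})^{2}$, and consider the $(J+n)\times(J+n)$ Hermitian dilation
\[
H_{J} \defeq \begin{pmatrix} 0 & B_{J} \\ B_{J}^{\top} & 0 \end{pmatrix},
\]
whose non-zero eigenvalues are $\pm \sigma_{i}(B_{J})$. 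The singular value gap $\sigma_{\textup{min}}(B_{J}) \geq 1-\sqrt{\gamma}$ in the limit is then equivalent to $\lambda_{\textup{max}}(\tilde H_{J}) \leq 1+\sqrt{\gamma}$ for a shifted matrix, and one can again try a high-moment argument --- but now applied to $(I - B_{J}^{\top}B_{J}/(1-\sqrt{\gamma}))^{k_{J}}$ or to suitable polynomials in $H_{J}$ that isolate the spectral region near $(1-\sqrt{\gamma})^{2}$. This is delicate: the combinatorial bookkeeping must track \emph{signed} contributions and requires sharp counting of the non-crossing/irreducible paths whose contributions cancel to expose the spectral edge. The finite-fourth-moment hypothesis is known to be sharp here (Bai--Yin show that $\bE[\xi_{11}^{4}]=\infty$ makes $\lambda_{\textup{min}}$ fail to stabilise), and the critical technical step in the whole argument is precisely the justification that after truncation the combinatorial expansion of these signed moments still yields the correct leading constant $(1-\sqrt{\gamma})^{2}$ rather than merely an upper bound of the form $(1-\sqrt{\gamma}-\varepsilon)^{2}$.
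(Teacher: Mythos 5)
You should first note that the paper itself offers no proof of this statement: it is imported verbatim as \citet[Theorem~2]{bai1993limit} (the Bai--Yin theorem) and used as a black box in the discussion of empirical CME estimators in \Cref{section:empirical}. So there is no internal argument to compare against; your proposal has to be judged as a free-standing proof of a deep random-matrix result, and at that level it is a plan rather than a proof.

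The architecture you describe is the standard one and the first two stages are essentially sound: almost-sure weak convergence of the ESD to the Mar\v{c}enko--Pastur law (which needs only second moments) pins the extreme eigenvalues to the support from the inside, and the truncation-plus-high-trace-moment argument with $k_J \to \infty$ gives $\limsup_J \lambda_{\textup{max}}(S_J) \le (1+\sqrt{\gamma})^2$ a.s.\ under the fourth-moment hypothesis. The genuine gap is exactly where you flag it: the lower bound $\liminf_J \lambda_{\textup{min}}(S_J) \ge (1-\sqrt{\gamma})^2$. Your proposed route --- the Hermitian dilation of $B_J = A_J/\sqrt{J}$ and high moments of ``suitable polynomials in $H_J$'' with signed, cancelling path contributions --- is not carried out, and this is precisely the step that constitutes the content of \citet{bai1993limit}. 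The argument actually used there is different and cleaner than what you sketch: one centres the sample covariance matrix and estimates the \emph{even} trace-moments $\bE\bigl[\operatorname{tr}\bigl(S_J - (1+\tfrac{n}{J})I\bigr)^{2k_J}\bigr]$, which control the operator norm $\bignorm{S_J - (1+\tfrac{n}{J})I}$ and show it converges a.s.\ to $2\sqrt{\gamma}$; since $(1\pm\sqrt{\gamma})^2 = 1+\gamma\pm 2\sqrt{\gamma}$, this single norm estimate delivers the upper edge and the lower edge simultaneously, with the combinatorics (graph-counting for the centred moments, made tractable by the truncation at level $\delta_J\sqrt{J}$) done once rather than twice. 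Without executing either your signed-polynomial computation or this centred-moment computation, the smallest-eigenvalue half of the theorem --- which is the half the paper actually needs, to prevent $\widehat{C}_X^{(n)\dagger}$ from blowing up --- remains unproved.
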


In our case the covariance operator $C_{X}$ is not the identity;
however, our case follows partially from \Cref{theorem:Bai1993smallestEigenvalue} using the eigendecomposition of $C_X$,
\[
	C_X = \sum_{i\in\bN}\sigma_i \, h_i\otimes h_i,
	\qquad
	\sigma_1 \geq \sigma_2 \geq \cdots \geq 0,
\]
and the Karhunen--Lo\`eve expansion of $V = \varphi(X) - \mu_X$,
\[
	V = \sum_{i \in \bN} \sqrt{\sigma_{i}}\, \xi_{i}\, h_{i},
\]
where $\xi_{n} \colon \Omega\to\bR$ are uncorrelated random variables with $\bE[\xi_{n}]=0$ and $\bV[\xi_{i}]=1$ for each $i\in\bN$ and $(h_i)_{i\in\bN}$ is an orthonormal eigenbasis of $\cH$.
To this end, let $V_j \defeq \varphi(X_j) - \mu_X$ be i.i.d.\ copies of $V$ and let $V_j^{(n)}$ be their respective orthogonal projections onto $\cH^{(n)} \defeq \spn \{ h_1,\dots,h_n \}$, i.e.
\[
	V_j = \sum_{i \in \bN} \sqrt{\sigma_{i}}\, \xi_{ij}\, h_{i} \text{ with } \xi_{ij}\stackrel{\text{i.i.d.}}{\sim}\xi_{i} \text{ for all } i,j,
	\qquad
	V_j^{(n)} = \sum_{i =1}^{n} \sqrt{\sigma_{i}}\, \xi_{ij}\, h_{i}.
\]
To simplify notation, let us work with $(n\times n)$-matrices instead of operators, expressed in the basis $(h_1,\dots,h_n)$ of $\cH^{(n)}$. Then
\begin{equation}
	\label{equ:DecompositionOfHatC_Xn}
	\widehat{C}_{X}^{(n)}
	=
	\frac{1}{J} \sum_{j = 1}^{J} V_j \otimes V_j
	=
	\big( C_{X}^{(n)} \big)^{1/2} \, S_J \, \big( C_{X}^{(n)} \big)^{1/2},
\end{equation}
where $S_J$ is defined by \eqref{equ:DefinitionA_JandS_J}.
So, we are \emph{nearly} in the setup of \Cref{theorem:Bai1993smallestEigenvalue} and ready to conclude
\[
	\lim_{J\to\infty} \lambda_{\textup{min}} (\widehat{C}_{X}^{(n)})
	\ge
	\lim_{J\to\infty} \sigma_{n}^{1/2} \lambda_{\textup{min}}(S_J) \sigma_{n}^{1/2}
	\ge
	\sigma_{n} (1-\sqrt{\gamma})^2,
\]
with $n = n(J)$ and $\gamma$ as in \Cref{theorem:Bai1993smallestEigenvalue}.
(Here we make use of the fact that $\norm{ A x } \geq \lambda_{\textup{min}}(A) \norm{ x }$ for all $x$ when $A$ is positive semi-definite.)
However, some obstacles remain:
\begin{itemize}
	\item Since the random variables $V_j$ are independent copies of $V$, the distributions of $\xi_{ij}$ and $\xi_{i j'}$ agree for all $j,j'$, but we cannot expect the distributions of $\xi_{ij}$ and $\xi_{i' j}$ to agree for all $i, i'$.
	Hence, $\xi_{ij}$ do not fulfil the requirement of being identically distributed.
	However, there are generalisations of \Cref{theorem:Bai1993smallestEigenvalue} to this case under technical assumptions;
	see e.g. \citet[Theorem 2.8]{bai1999methodologies}.
	\item It is unclear when the condition $\bE[\xi_{11}^4] <\infty$ is fulfilled.
	There are, however, some results in the case of infinite fourth moments;
	see \citet{heiny2018almost} and references therein.
	\item The random variables $\xi_i$ are uncorrelated but, in general, not independent.
	We are not aware of a result similar to \Cref{theorem:Bai1993smallestEigenvalue} for the uncorrelated case.\footnote{\citet{chen2013covariance} discuss the estimation of covariance and precision matrices for time series data where the random variables $V_j$ are not assumed to be independent.
	However, we need to drop independence in the rows of $A_J$, not in its columns.}
\end{itemize}
Even if we manage to formulate a version of \Cref{theorem:Bai1993smallestEigenvalue} which suits our needs, note that the considerations so far, if executed rigorously, only guarantee that $C_{X}^{\dagger}$ does not blow up.
This is still some way short of establishing the convergence of the corresponding CME estimator
\begin{equation}
	\label{equ:truncated_empirical_CME}
	\mu_{Y|X = x}
	\approx
	\widehat{\mu}_{Y} + \bigl( \widehat{C}_{X}^{(n)\dagger} \widehat{C}_{XY}^{(n)} \bigr)^{\ast} \, \bigl( \varphi(x) - \widehat{\mu}_{X} \bigr),
	\qquad
	n = n(J).
\end{equation}
We conclude here by summarising some of the necessary steps:
\begin{itemize}
	\item In the above considerations we assumed $\mu_{X}$ to be known.
	In practice, however, we may only employ its empirical estimate $\widehat{\mu}_{X}$.
	Since $\mu_X\otimes \mu_X$ is a rank-one estimator, this issue might be partially resolved by \citet[Corollary 2.1]{gohberg1969introduction}, which implies that the eigenvalues of $\uu{C}_X$ and $C_X = \uu{C}_X - \mu_X\otimes \mu_X$ have a similar decay rate.
	\item In order to project onto $\cH^{(n)} = \spn \{ h_1,\dots,h_n \}$, we require the eigenvectors $h_i$ of $C_X$.
	While the $n$ dominant eigenvectors of $\widehat{C}_{X}$ can be used as estimates of $h_i$, it is unclear how the approximation error affects the theoretical results presented above.
	\item Controlling the eigenvalues alone is insufficient.
	For a reasonable approximation of $C_{X}^{(n)}$ in \eqref{equ:DecompositionOfHatC_Xn}, we would need a result which tells us that $S_J$ becomes close to the identity matrix for large $J$, which requires us to understand the behaviour of its eigenvectors as well.
	The investigation of the eigenvectors of $S_J$ turns out to be extremely challenging;
	see \citet[Chapter 10]{bai2010spectral} for a survey of existing results.
\end{itemize}

\bibliographystyle{abbrvnat}
\bibliography{myBibliography}
\addcontentsline{toc}{section}{References}

\end{document}